\newtheorem{same}{This should never appear}[section]
\newtheorem{defin}[same]{Definition}
\newtheorem{remark}[same]{Remark}
\newtheorem{theorem}[same]{Theorem}
\newtheorem{example}[same]{Example}
\newtheorem{lemma}[same]{Lemma}
\newtheorem{fact}[same]{Fact}
\newtheorem{question}[same]{Question}
\newtheorem{cor}[same]{Corollary}
\newtheorem{prop}[same]{Proposition}
\newtheorem{hypothesis}[same]{Hypothesis}
\newtheorem{nota}[same]{Notation}
\newtheorem{defin*}{Definition}
\newtheorem*{theorem*}{Theorem}
\newcommand{\skipitems}[1]{%
  \addtocounter{\@enumctr}{#1}%
}
\newcommand{\rest}{\mathord{\upharpoonright}}
\newcommand{\id}{\textrm{id}}
\newcommand{\A}{\mathbf{A}}
\newcommand{\D}{\mathbf{D}}
\newcommand{\M}{\mathcal{M}}
\newcommand{\mq}{\mathcal{M}_\mathcal{Q}}
\newcommand{\md}{\mathcal{D}}
\newcommand{\LS}{\operatorname{LS}}
\newcommand{\rmod}[1]{\mbox{\rm{Mod}--}{#1}}
\newcommand{\Hom}[3]{\operatorname{Hom}_{#1}(#2,#3)}
\newcommand{\Ext}[4]{\operatorname{Ext}^{#1}_{#2}(#3,#4)}
\newcommand{\gtp}{\mathbf{gtp}}
\newcommand{\gS}{\mathbf{gS}}
\DeclareMathOperator{\im}{im}    % image
\newbox\noforkbox \newdimen\forklinewidth
\noforkbox\hbox{\lower 2pt\box1\lower 2pt\box0\relax}
\def\unionstick{\mathop{\copy\noforkbox}\limits}
\def\nonfork_#1{\unionstick_{\textstyle #1}}
\newbox\doesforkbox
\doesforkbox\hbox{\lower 2pt\box1 \lower 2pt\box2\lower2pt\box0\relax}
\def\nunionstick{\mathop{\copy\doesforkbox}\limits}
\def\fork_#1{\nunionstick_{\textstyle #1}}
\newcommand{\dnf}{\unionstick}
\title{Deconstructible classes of modules and stability}
\author{Marcos Mazari-Armida}
\email{marcos\_mazari@baylor.edu}
\urladdr{https://sites.baylor.edu/marcos\_mazari/}
\address{Department of Mathematics \\ Baylor University \\ Waco, Texas, USA}
\thanks{The first author was partially supported by an NSF grant DMS-2348881, a Simons Foundation grant MPS-TSM-00007597 and an AMS-Simons Travel Grant 2022--2024.}
\author{Jan Trlifaj}
\email{trlifaj@karlin.mff.cuni.cz}
\urladdr{https://www2.karlin.mff.cuni.cz/~trlifaj/en/index.html}
\address{Univerzita Karlova \\ Matematicko-fyzik\'{a}ln\'{\i} fakulta, Katedra algebry \\ Praha, \v{C}esk\'{a} republika}
\thanks{The second author was supported by GA\v CR 23-05148S}
\date{\today.}
\begin{document}

%%%%%%%%%%%%%%%%%%%%%%%%%%%%%%%%%%%%%%%%%%%%%%%
\begin{abstract}
We show that every deconstructible class of modules with all embeddings, all pure embeddings and all RD-embeddings is stable. The argument is presented in the context of abstract classes of modules without amalgamation and the key idea is to construct a \emph{stable-like} independence relation. 

In particular, the following classes of modules with all embeddings, all pure embeddings and all RD-embeddings are shown to be stable: all free and  torsion-free modules over any ring, and for each $n \geq 0$, the classes of all modules of projective and flat dimension $\leq n$ over any ring, and the class of all modules of injective dimension $\leq n$ over any right noetherian ring. 
\end{abstract}
%%%%%%%%%%%%%%%%%%%%%%%%%%%%%%%%%%%%%%%%%%%%%%%

\maketitle

{\let\thefootnote\relax\footnote{{AMS 2020 Subject Classification:
Primary: 03C48, 16E30.  Secondary: 03C45, 03C60, 16D90, 13L05.
Key words and phrases.  Abstract Elementary Classes; Amalgamation; Deconstructible classes of modules; Independence relations; Stability.}}}

%\tableofcontents

%%%%%%%%%%%%%%%%%%%%%%%%%%%%%%%%%%%%%%%%%%%%%%%
\section{Introduction}

A classical result of first-order model theory, due to Fisher \cite{fisher} and Baur \cite[Theo 1]{baur}, says that if $T$ is a complete first-order theory of $R$-modules then $T$ is stable.\footnote{In the language of this paper, the class of models of $T$ with pure embeddings is stable.} Recently, it was asked if such a result could be extended beyond first-order model theory. More specifically, if it can be extended to abstract elementary classes (AECs for short):

\begin{question}[{\cite[2.12]{maztor}}]\label{mainq} Let $R$ be an associative unital ring and denote the pure submodule relation by $\leq_p$.
If $(K, \leq_p)$ is an abstract elementary class such that $K \subseteq \rmod R$, is $(K, \leq_p)$ stable? 
\end{question} 

 There are numerous cases where the answer is positive -- see \cite[\S 3.2]{bon25} for a recent summary. In this paper, we provide a positive answer for all deconstructible classes of modules, and show that the answer is positive even for embeddings and RD-embeddings:

\vspace{0.1cm}
 \textbf{Theorem \ref{st}.} \textit{Assume $\md$ is $\kappa^+$-deconstructible for $\kappa \geq \operatorname{card}(R) + \aleph_0$. If $\lambda^{\kappa}=\lambda$, then $\md$ with the class of all embeddings, all pure embeddings, and all RD-embeddings is stable in $\lambda$.}

\vspace{0.1cm}
Deconstructible classes of modules constitute an algebraic framework isolated more than twenty years ago. Their importance stems from the fact that they allow the development of relative homological algebra \cite{enje}, \cite{GT}. Many important classes of modules are deconstructible: all free modules and all torsion-free modules over any ring, and for each $n \geq 0$, the classes of all modules of projective and flat dimension $\leq n$ over any ring, and the class of all modules of injective dimension $\leq n$ over any right noetherian ring.  So in particular, Theorem \ref{st} generalizes: \cite[0.3]{baldwine} which shows stability for torsion-free abelian groups with pure embeddings, \cite[4.3]{lrvcell} which shows stability for flat modules with pure embeddings, and \cite[3.24]{maz2} which shows stability for injective modules with embeddings over noetherian rings. 

Theorem \ref{st} is proved by showing that a certain independence relation coming from pushouts (see Definition \ref{r-free}) is \emph{close} to being a stable independence relation. This is similar to \cite[\S 2]{lrvcell}, \cite[\S 4]{maz1}, \cite[\S 3]{mj}, but a key difference to that work and any of the previous work done with independence relations is that we work in a context without amalgamation. This is necessary as deconstructible classes of modules might not have the amalgamation property, see Examples \ref{illus3} and \ref{illus4}.  We are able to show stability by showing that our relation has existence and uniqueness with respect to some \textit{strong} embeddings and that local character  holds for \textit{strong} embeddings (see Lemma \ref{local}). This last result is obtained by merging tools from both module theory and model theory, more precisely we do a downward L\"owenheim-Skolem type of argument which incorporates  the Hill Lemma.

Another important difference between this work and previous work studying classes of modules from the perspective of model theory is that deconstructible classes of modules with all embeddings, all pure embeddings, and all RD-embeddings do not necessarily constitute abstract elementary classes. We characterize when this happens in Corollary \ref{wAEC} and Corollary \ref{corAEC}. Due to this, we work in the framework of abstract classes (first isolated by Grossberg). This allows us to show stability for some classical classes for modules, such as free and projective modules, and suggests that the question of stability is interesting even beyond AECs. 

The paper also provides an example of a natural abstract class which is categorical in every uncountable cardinal, but which does not have the amalgamation property in any infinite cardinal. Hence Shelah's classical result that amalgamation follows from categoricity under weak GCH for all abstract elementary classes \cite{sh88} cannot be extended to our set up (see Example \ref{illus4} and Remark \ref{moreprop}). 

While we were finishing this paper, Paolini and Shelah posted a paper \cite{PS} that focuses on Question \ref{mainq}. They show that it has a negative answer by providing an example of an AEC of torsion-free abelian  groups with the pure subgroup relation that is not stable, see \cite[1.2]{PS}. Nevertheless, they show that the answer is positive for all AECs that have the amalgamation property and tameness \cite[1.3]{PS}. It is worth pointing out that  the classes we consider in this paper might not have the amalgamation property or even be AECs. Hence our results are complementary to the results of Paolini and Shelah.

As the paper uses deep tools both from module theory and model theory, we make an effort to present the background for both subjects. The paper is divided into two sections. Section 2 presents preliminaries, basic results, and many natural examples which elucidate the set-up of the paper. Section 3 has the main results of the paper and shows that deconstructible classes of modules with amalgamation are tame.

The work for this paper began at \emph{WAECO: A workshop on the occasion of Rami Grossberg’s 70th birthday}. The visit of the second author to Baylor was partially supported by an AMS-Simons Travel Grant 2022--2024 of the first author, and by GA\v{C}R grant no.\ 23-05148S. We would like to thank the AMS, GA\v{C}R, and the Simons Foundation for their support.

\section{Preliminaries and basic results}

\subsection{Main framework} Let $R$ be an (associative unital) ring and $\rmod R$ be the category of all (unitary right) $R$-modules with $R$-homomorphisms. For an infinite cardinal $\kappa$, $\rmod R^{< \kappa}$ and $\rmod R^{\leq \kappa}$ will denote its full subcategories consisting of $< \kappa$-presented and $\leq \kappa$-presented modules, respectively. 

A submodule $N$ of a module $M$ is a \emph{pure submodule}, denoted by $N \leq_p M$, in case any finite system of $R$-linear equations with the right-hand side in $N$ which is solvable in $M$, is also solvable in $N$.  Equivalently, for each finitely presented module $F$ and each $f \in \Hom RF{M/N}$, there exists $g \in \Hom RFM$ such that $f = \pi_N g$ where $\pi_N \in \Hom RM{M/N}$ is the canonical projection. Also, $N$ is pure in $M$, if and only if for each finitely presented left $R$-module $F$, the functor $- \otimes_R F$ is exact on the short exact sequence $0 \to N \to M \to M/N \to 0$, see e.g.\ \cite[2.19]{GT}. In particular, $N$ is pure in $M$ in case $M/N$ is a flat module. Moreover, if $M$ is flat, then $N$ is pure in $M$, if and only if $M/N$ is flat. A \emph{pure embedding} is a monomorphism whose image is a pure submodule of its codomain. 
 
A submodule $N$ of a module $M$ is an \emph{RD-submodule}, denoted by $N \leq_{RD} M$, in case $N \cap Mr = Nr$ for each $r \in R$. Equivalently, for each cyclically presented module $C = R/rR$ and each $f \in \Hom RC{M/N}$, there exists $g \in \Hom RFM$ such that $f = \pi_N g$. Also, $N$ is an RD-submodule of $M$, if and only if for each cyclically presented left $R$-module $C$, the functor $- \otimes_R C$ is exact on the short exact sequence $0 \to N \to M \to M/N \to 0$.  An \emph{RD-embedding} is a monomorphism whose image is an RD-submodule of its codomain.

A module $M$ is \emph{torsion-free} if $m.r \neq 0$ for each $0 \neq m \in M$ and each non-zero divisor $r \in R$. Equivalently, $M$ is torsion-free, if Tor$^R_1(M,R/Rr) = 0$ for each non-zero divisor $r \in R$. In particular, all flat modules are torsion-free. For example, if $R$ is an integral domain and $M/N$ is a torsion-free module, then $N$ is an RD-submodule of $M$. If $M$ is moreover torsion-free, then $N$ is an RD-submodule of $M$, if and only if $M/N$ is torsion-free. Any pure submodule is an RD-submodule, and the two notions coincide when $R$ is a Pr\"{u}fer domain.

If $\delta$ is an ordinal, then an increasing chain of modules $\mathcal T = ( A_i \mid i < \delta )$ is \emph{continuous} provided that $A_i = \bigcup_{j<i} A_j$ for each limit ordinal $i < \delta$. $\mathcal T$ is a continuous increasing chain \emph{in $\mathcal{A}$} if moreover $A_i \in \mathcal{A}$ for every $ i < \delta$.

Given a class of modules $\mathcal{C}$ we say that $M$ is a \emph{$\mathcal{C}$-filtered module}, denoted by $M \in \operatorname{Fil}(\mathcal{C})$, if there is an ordinal $\delta$ and a continuous increasing  chain of modules $\mathcal T = ( M_i \mid i \leq \delta)$ such that $M_0 =0$, $M_\delta =M$ and for every $i < \delta$, $M_{i +1}/ M_i \cong C_i$ for some  $C_i \in \mathcal{C}$. The chain $\mathcal T$ is called a \emph{$\mathcal C$-filtration} of $M$ (witnessing that $M$ is $\mathcal C$-filtered). 

\begin{defin}\label{defd} \rm
Let $\mathcal{D}$ be a class of modules. If $\kappa$ is an infinite cardinal such that $\mathcal{D} = \operatorname{Fil}(\mathcal{D}^{<\kappa})$, where $\mathcal{D}^{<\kappa}$ is the class of all $(<\kappa)$-presented modules in $\mathcal{D}$, then $\mathcal{D}$ is called \emph{$\kappa$-deconstructible}. $\mathcal{D}$ is \emph{deconstructible} in case $\mathcal{D}$ is $\kappa$-deconstructible for some infinite cardinal $\kappa$. 
\end{defin}

Deconstructibility is frequent in module theory: deconstructible classes include the classes of all free modules and all torsion-free modules over any ring \cite[8.1]{GT}, and for each $n \geq 0$, the classes of all modules of projective and flat dimension $\leq n$ over any ring \cite[8.10, 8.3]{GT}, and the class of all modules of injective dimension $\leq n$ over any right noetherian ring \cite[8.12]{GT}. In fact, if $\mathcal D$ is any deconstructible class of modules and $n \geq 0$, then the class of all modules of $\mathcal D$-resolution of dimension $\leq n$ is also deconstructible, cf.\ \cite[Corollary 3.5]{ST}. As we will refer to these classes of modules often, for $n \geq 0$, we will denote by $\mathcal P _n$, $\mathcal F_n$, and $\mathcal I_n$ the class of all modules of projective, flat, and injective dimension $\leq n$, respectively, and $Free$ and $\mathcal T \mathcal F$ will denote the classes of all free and torsion-free modules, respectively.

\medskip
We introduce the main framework of the paper. We will explicitly mention when we assume this hypothesis.

\begin{hypothesis}\label{hyp} Let $\kappa \geq \operatorname{card}(R) + \aleph_0$ and $(\mathcal{D}, \M)$ be a pair such that:
\begin{enumerate}
\item $\mathcal{D}$ is a $\kappa^+$-deconstructible class of modules. 
\item $\M$ is either the class of all embeddings, all RD-embeddings, or all pure embeddings.
\end{enumerate} 
\end{hypothesis}

The following two results  are useful when working with deconstructible classes of modules, cf.\ \cite[Chap.\ 7]{GT}. Recall that a class of modules $\mathcal{A}$ is \emph{closed under extensions} if given a short exact sequence $0 \to A \to B \to C \to 0$, if $A, C \in \mathcal{A}$ then $B \in \mathcal{A}$. 

\begin{fact}\label{s-ext}
Let $\md$ be deconstructible. Then $\md$ is closed under transfinite extensions, i.e., $\md$ coincides with the class of all $\md$-filtered modules. In particular, $\md$ is closed under extensions and arbitrary direct sums in $\rmod R$.  
\end{fact}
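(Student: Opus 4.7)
The plan is to derive everything from the definitional identity $\md = \operatorname{Fil}(\md^{<\kappa})$ furnished by deconstructibility. Since the trivial chain $0 \subseteq M$ shows $\md \subseteq \operatorname{Fil}(\md)$, only the reverse inclusion $\operatorname{Fil}(\md) \subseteq \md$ carries content; the two explicit consequences will follow as corollaries.

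For that inclusion I would take an $\md$-filtered module $M$ with $\md$-filtration $(M_i : i \leq \delta)$ and refine it step by step. For each $i < \delta$, the successive quotient $M_{i+1}/M_i$ belongs to $\md = \operatorname{Fil}(\md^{<\kappa})$, so it carries a continuous chain $(N^i_j : j \leq \delta_i)$ from $0$ to $M_{i+1}/M_i$ with successive quotients in $\md^{<\kappa}$. Pulling back along the projection $\pi_i : M_{i+1} \twoheadrightarrow M_{i+1}/M_i$, set $M^i_j := \pi_i^{-1}[N^i_j]$; then $M_i = M^i_0 \subseteq M^i_j \subseteq M^i_{\delta_i} = M_{i+1}$ is a continuous chain whose successive quotients $M^i_{j+1}/M^i_j \cong N^i_{j+1}/N^i_j$ lie in $\md^{<\kappa}$. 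Concatenating these refinements along the ordinal sum $\sum_{i<\delta} \delta_i$ yields a single continuous chain from $0$ to $M$ with all successive quotients in $\md^{<\kappa}$, so $M \in \operatorname{Fil}(\md^{<\kappa}) = \md$.

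Closure under extensions is then the case $\delta = 2$, applied to the two-step chain $0 \subseteq A \subseteq B$ coming from $0 \to A \to B \to C \to 0$ with $A, C \in \md$. For closure under arbitrary direct sums, given $(M_i)_{i \in I}$ in $\md$, I would well-order $I$ with order type $\delta$ and take the partial-sum chain $N_j := \bigoplus_{i < j} M_i$; continuity at limits is immediate because direct sums commute with directed unions, and $N_{j+1}/N_j \cong M_j \in \md$, so the transfinite-extension closure just established gives $\bigoplus_{i \in I} M_i \in \md$. The main subtle point throughout is verifying continuity of the concatenated chain at ordinals of the form $\sum_{i<k} \delta_i$ for limit $k$, which reduces to the continuity of the original $(M_i)$ at $k$ that is already part of the filtration data, together with the fact that pullback along a surjection commutes with the relevant directed unions.
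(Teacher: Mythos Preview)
Your argument is correct and is the standard refinement-of-filtrations proof. Note, however, that the paper does not give its own proof of this statement: it is recorded as a \emph{Fact} with a reference to \cite[Chap.~7]{GT}, so there is no in-paper argument to compare against. What you have written is essentially the proof one finds in that reference (the key observation that a $\md$-filtration can be refined to a $\md^{<\kappa}$-filtration by pulling back the $\md^{<\kappa}$-filtrations of the successive quotients, together with the bookkeeping for continuity at the two kinds of limit ordinals in the concatenated chain). Your handling of the continuity check at ordinals of the form $\sum_{i<k}\delta_i$ for limit $k$ is the only genuinely delicate point, and you have identified it correctly; it is worth remarking explicitly that one may assume each $\delta_i>0$ (equivalently, discard repeated terms from the original filtration) to avoid vacuous segments in the concatenation.
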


\begin{fact}[{The Hill Lemma}]\label{Hill} Assume $\kappa$ is a regular infinite cardinal and $\mathcal{C} \subseteq \rmod R ^{<\kappa}$. Suppose that $M$ is a $\mathcal{C}$-filtered module which is  witnessed by a $\mathcal{C}$-filtration $\mathcal{T}= (M_\alpha \mid \alpha \leq \delta)$. Then there is a family $\mathcal{H}$ of submodules of $M$ such that:
\begin{itemize}
\item[{\rm{(H1)}}] $\mathcal{T} \subseteq \mathcal{H}$. In particular, $0 \in \mathcal{H}$ and $M \in \mathcal{H}$.
\item[{\rm{(H2)}}] $(\mathcal{H}, \subseteq)$ is a complete distributive sublattice of the modular lattice of all submodules of $M$. In particular, $\mathcal{H}$ is closed under arbitrary sums, intersections, and unions of  continuous increasing chains.
\item[{\rm{(H3)}}] If $N, P \in \mathcal{H}$ and $N \subseteq P$, then $P/N$ is $\mathcal{C}$-filtered. In particular, since $0 \in \mathcal H$, all modules $P \in \mathcal{H}$ are $\mathcal{C}$-filtered.
\item[{\rm{(H4)}}] For every $N \in \mathcal{H}$ and $X \subseteq M$, if $|X| < \kappa$, then there is $P \in \mathcal{H}$ such that $N \cup X \subseteq P$ and $P/N$ is $(<\kappa)$-presented. 
\end{itemize}
\end{fact}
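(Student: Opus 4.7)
The plan is to construct $\mathcal{H}$ by associating to each ``closed'' subset of the index ordinal $\delta$ a submodule of $M$ built from a fixed system of generators and relations for each filtration factor, so that the combinatorial lattice of closed subsets faithfully reflects the algebraic lattice $\mathcal{H}$.

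First I would fix, for every $\alpha < \delta$, a generating set $X_\alpha \subseteq M_{\alpha+1}$ with $|X_\alpha| < \kappa$ and $M_{\alpha+1} = M_\alpha + \langle X_\alpha \rangle$; since $C_\alpha := M_{\alpha+1}/M_\alpha$ lies in $\mathcal{C} \subseteq \rmod R^{<\kappa}$, I further choose a set $Y_\alpha$ of fewer than $\kappa$ relations, each of the form $\sum_{x \in X_\alpha} x r_x = a$ for some $a \in M_\alpha$, that generates all relations of $C_\alpha$. By transfinite induction $M_\alpha = \sum_{\beta<\alpha} \langle X_\beta \rangle$, so I can express each such $a$ as a finite $R$-linear combination over $\bigcup_{\beta<\alpha} X_\beta$ and record the finite set of indices used as a \emph{support} $s_\alpha(y) \subseteq \alpha$. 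A subset $S \subseteq \delta$ is \emph{closed} if $s_\alpha(y) \subseteq S$ for every $\alpha \in S$ and every $y \in Y_\alpha$; set $M(S) := \sum_{\alpha \in S} \langle X_\alpha \rangle$ and
\[
\mathcal{H} := \{ M(S) : S \subseteq \delta \text{ is closed} \}.
\]

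Properties (H1)--(H3) then fall out. Each initial segment $[0,\beta)$ of $\delta$ is closed and $M([0,\beta)) = M_\beta$, giving (H1). The closed subsets form a complete distributive sublattice of $\mathcal{P}(\delta)$ under $\cap$ and arbitrary $\cup$, and the key algebraic point is that $S \mapsto M(S)$ is a lattice embedding: the closure condition forces every relation among the generators of $M(S)$ to be witnessed inside $M(S)$, so $M(S) \cap M(T) = M(S \cap T)$ for closed $S, T$, and distributivity descends from the Boolean algebra of closed sets, yielding (H2). For (H3), given closed $S \subseteq T$, enumerating $T \setminus S$ in increasing order as $(\alpha_i : i < \mu)$ produces a chain filtering $M(T)/M(S)$ with successive factors isomorphic to $C_{\alpha_i} \in \mathcal{C}$; closedness ensures that at each stage the defining relations of the next factor already lie in the previous term.

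The hard part will be (H4). Given a closed $S$ and $X \subseteq M$ with $|X| < \kappa$, I would build an increasing sequence $S = S_0 \subseteq S_1 \subseteq \cdots \subseteq \delta$, each $S_n$ of size $|S_n \setminus S| < \kappa$, by alternating two operations: at odd steps adjoin the $<\kappa$ indices needed to express each element of $X$ as an $R$-linear combination over $\bigcup_\beta X_\beta$ (using $X \subseteq M = M_\delta$); at even steps close under the support maps $s_\alpha$ for every $\alpha$ added so far. Regularity of $\kappa$, together with $|Y_\alpha| < \kappa$ and finite individual supports, ensures that $T := \bigcup_n S_n$ is closed, has $|T \setminus S| < \kappa$, and satisfies $X \subseteq M(T)$; then $M(T)/M(S)$ is $(<\kappa)$-presented via (H3). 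The most delicate point throughout is arranging the bookkeeping $(X_\alpha, Y_\alpha, s_\alpha)$ uniformly enough that the combinatorial intersection and union of closed subsets really do correspond to the algebraic intersection and sum of the associated submodules; once this translation is in place, (H1)--(H4) each follow by a routine induction or closure argument.
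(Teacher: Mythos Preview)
The paper does not prove this statement: it is presented as a \emph{Fact} (the Hill Lemma) and attributed to \cite[Chap.~7]{GT}, so there is no in-paper proof to compare against. Your sketch is essentially the standard proof one finds in that reference: fix $<\kappa$ many generators and relations for each factor $M_{\alpha+1}/M_\alpha$, define supports, call a subset $S\subseteq\delta$ closed when it contains all supports of relations over its members, and set $\mathcal{H}=\{M(S):S\text{ closed}\}$. The outline is correct, and you have correctly flagged the one genuinely nontrivial step, namely that $M(S)\cap M(T)=M(S\cap T)$ for closed $S,T$; this is what makes $S\mapsto M(S)$ a lattice embedding and is proved by an induction on the filtration showing that any element of $M(S)\cap M(T)$ can be rewritten using only generators indexed in $S\cap T$. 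Your treatment of (H4) via an $\omega$-iteration closing under supports, with regularity of $\kappa$ controlling the size, is also the standard argument. So your proposal is correct and coincides with the classical proof the paper is citing.
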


Let $\mathcal A$ be a class of modules. We will denote by $\varinjlim \mathcal A$ ( $\varinjlim _\omega \mathcal A$ ) the class of all modules $M$ such that $M$ is the direct limit of a direct system (a countable direct system) consisting of modules from $\mathcal A$. For example, if $\mathcal A = \mathcal P_0$, then $\varinjlim \mathcal A = \mathcal F _0$. A class of modules $\mathcal A$ is \emph{closed under (countable) direct limits} in case  $\mathcal A = \varinjlim \mathcal A$ ($\mathcal A = \varinjlim _\omega \mathcal A$).

\subsection{Abstract elementary classes} A pair $\mathbf{A}= (\mathcal{A}, \lesssim_{\mathbf{A}})$ is an \emph{abstract class of modules} (or an \emph{AC} for short), if  $\mathcal A$ is a class of modules\footnote{In the language of modules  $\{0, +,-\} \cup \{ \cdot r   \mid r \in R \}$ where $R$ is a fixed ring and $\cdot r$ is interpreted as right multiplication by $r$ for each $r \in R$.}, $\lesssim_{\mathbf{A}}$ is a partial order on $\mathcal A$ such that for all $A, B \in \mathcal A$, if $A \lesssim_{\mathbf{A}} B$, then $A$ is a submodule of $B$, and both $\mathcal A$ and $\lesssim_{\mathbf{A}}$ are closed under isomorphisms (in the latter case, this means that $N^\prime \lesssim_{\mathbf{A}} M^\prime$, whenever $N \lesssim_{\mathbf{A}} M$ and there is an isomorphism $f\colon M \to M^\prime$ such that $f(N) = N^\prime$). We denote the cardinality of a module $A$ by $\|A\|$.

So clearly, if $(\mathcal{D}, \M)$ satisfies Hypothesis \ref{hyp}, then $\mathbf{D} = (\mathcal{D}, \leq_\M)$ is an AC where $A \leq_\M B$ if the inclusion from $A$ to $B$ is in $\M$ and $A, B \in \mathcal{D}$.

We will say that $\lesssim_{\mathbf{A}}$ \emph{refines direct summands} provided that $A \lesssim_{\mathbf{A}} B$ whenever $A, B \in \mathcal A$ and $A$ is a direct summand in $B$. This is the case, for example, when $\lesssim_{\mathbf{A}}$ is the partial order induced by $\mathcal M$ when $\mathcal M$ is the class of all inclusions, all pure inclusions, or all RD-inclusions.

\begin{defin}\label{defaec} \rm
An abstract class of modules $\mathbf A = (\mathcal A, \lesssim_{\mathbf{A}} )$ is an \emph{abstract elementary class} (or an \emph{AEC}) of modules, in case the following conditions are satisfied:
\begin{itemize}
\item[{\rm{(A1)}}] \emph{Tarski-Vaught Axioms:} If $( A_i \mid i < \delta )$ is a continuous $\lesssim_{\mathbf{A}}$-increasing chain in $\mathcal A$, then
\begin{enumerate}
\item \emph{Closure under unions of $\lesssim_{\mathbf{A}}$-chains:} $\bigcup_{i<\delta} A_i \in \mathcal A$,
\item \emph{$\lesssim_{\mathbf{A}}$-continuity:} $A_j \lesssim_{\mathbf{A}} \bigcup_{i<\delta}A_i$ for each $j<\delta$, and
\item \emph{Smoothness:} If $M \in \mathcal A$ and $A_i \lesssim_{\mathbf{A}}  M$ for each $i<\delta$, then $\bigcup_{i<\delta}A_i \lesssim_{\mathbf{A}} M$.
\end{enumerate}
\item[{\rm{(A2)}}] \emph{Coherence:} If $A,B,C \in \mathcal A$, $A \lesssim_{\mathbf{A}} C$, $B \lesssim_{\mathbf{A}} C$, and $A$ is a submodule of $B$, then $A \lesssim_{\mathbf{A}} B$.
\item[{\rm{(A3)}}] \emph{L\"{o}wenheim-Skolem-Tarski Axiom:} There exists a cardinal $\lambda$ such that, if $A$ is a submodule of a module $B \in \mathcal A$, then there exists
$A^\prime \in \mathcal A$ such that $A \subseteq A^\prime \lesssim_{\mathbf{A}} B$, and $\|A^\prime \| \leq \|A \| + \lambda$. The least such infinite cardinal $\lambda \geq \operatorname{card}(R)$ is called the \emph{L\" owenheim--Skolem number} of $\mathbf A$, and it is denoted by $\LS(\mathbf A)$.
\end{itemize}
\end{defin}

If $(\mathcal{D}, \M)$ satisfies Hypothesis \ref{hyp}, then clearly coherence holds for $\mathbf{D} = (\mathcal{D}, \leq_\M)$ and if (A1)(1) holds for $\mathbf{D}$ then (A1)(2)(3) hold for $\mathbf{D}$\footnote{Observe that (A1)(2)(3) hold as they hold for $(\rmod R, \leq_\M)$.}.
Moreover, $\D$ satisfies the L\"owenheim-Skolem-Tarski Axiom. The proof of the latter fact follows from $(\rmod R, \leq_\M)$ being an AEC with  L\" owenheim--Skolem number $\operatorname{card} (R) + \aleph_0$, and from the following consequence of the Hill Lemma which will be used a few times:

\begin{prop}\label{small} Assume that $\kappa$ is an infinite cardinal, and $\mathcal{D}$ is a $\kappa^+$-deconstructible class of modules. 
Let $A \in \mathcal{D}$ and $\mathcal{H}$ be the Hill family from Fact \ref{Hill} for the regular infinite cardinal $\kappa^+$, $\mathcal C = \md ^{\leq \kappa}$, and for a $\mathcal C$-filtration $\mathcal T$ of $A$. If $B$ is a submodule of $A$, then there is $C \in \mathcal{H}$ such that $B \subseteq C$ and $\| C \| \leq \| B \| + \kappa$. 
\end{prop}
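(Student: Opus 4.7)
The plan is to absorb $B$ into $\mathcal{H}$ by transfinite recursion, using clause (H4) of the Hill Lemma at successor stages and the closure of $\mathcal{H}$ under unions of continuous increasing chains, i.e.\ clause (H2), at limit stages. This is essentially a downward L\"owenheim--Skolem argument carried out inside $\mathcal{H}$, with (H4) playing the role of the usual finite-closure step.

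Concretely, I would fix an enumeration $B = \{b_\alpha : \alpha < \mu\}$ where $\mu := \|B\|$ (and just take $C := 0$ if $B = 0$), and then construct a continuous increasing chain $(C_\alpha : \alpha \leq \mu)$ with each $C_\alpha \in \mathcal{H}$. Start with $C_0 := 0$, which lies in $\mathcal{H}$ by (H1). At a successor stage $\alpha + 1$, apply (H4) with $N := C_\alpha$ and $X := \{b_\alpha\}$ (so $|X| = 1 < \kappa^+$) to obtain $C_{\alpha+1} \in \mathcal{H}$ satisfying $C_\alpha \cup \{b_\alpha\} \subseteq C_{\alpha+1}$ and with $C_{\alpha+1}/C_\alpha$ being $(<\kappa^+)$-presented; since a $(<\kappa^+)$-presented module has cardinality at most $\kappa$ in the framework of the paper (where $\kappa \geq \operatorname{card}(R) + \aleph_0$), this gives $\|C_{\alpha+1}\| \leq \|C_\alpha\| + \kappa$. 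At a limit stage $\beta \leq \mu$, set $C_\beta := \bigcup_{\alpha < \beta} C_\alpha$, which lies in $\mathcal{H}$ by (H2).

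Setting $C := C_\mu$ then gives $B \subseteq C \in \mathcal{H}$, and a routine transfinite induction on the successor-stage estimate yields $\|C\| \leq \mu \cdot \kappa + \kappa = \|B\| + \kappa$. I do not foresee any genuine obstacle: the proposition is essentially a convenient repackaging of (H1), (H2) and (H4), with the delicate part---that any member of the filtration can be enlarged to swallow an arbitrary small set while still being $\mathcal C$-filtered and staying inside the Hill family---already encoded in the Hill Lemma itself.
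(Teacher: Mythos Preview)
Your argument is correct, but the paper proceeds more directly and avoids transfinite recursion altogether. There, one applies (H4) \emph{once} for each finite subset $X \subseteq B$ with $N = 0$, obtaining $C_X \in \mathcal{H}$ of cardinality $\leq \kappa$ containing $X$, and then sets $C = \sum_{X \in \mathcal{P}^{<\omega}(B)} C_X$; the closure of $\mathcal{H}$ under arbitrary sums in (H2) gives $C \in \mathcal{H}$, and the cardinality bound is immediate. Your recursive construction instead feeds the output of one application of (H4) in as the $N$ of the next, and uses only the closure of $\mathcal{H}$ under unions of continuous chains at limit stages. Both are routine; the paper's version is shorter and exploits the full lattice structure of $\mathcal{H}$, while yours would still go through under the weaker hypothesis that $\mathcal{H}$ is closed under directed unions rather than arbitrary sums.
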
 
\begin{proof} Let  $\mathcal{P}^{<\omega}(B)$ be the set of finite subsets of $B$. For every $X \in \mathcal{P}^{<\omega}(B)$, let $C_X \in \mathcal{H}$ be such that $X \subseteq C_X$ and $\| C_X \| \leq \kappa$. $C_X$ exists by Fact \ref{Hill}.(H4). Let $C = \sum_{X \in \mathcal{P}^{<\omega}(B)} C_X$. $C \in \mathcal{H}$ by Fact \ref{Hill}.(H2) and clearly $B \subseteq C$ and $\| C \| \leq \| B \| + \kappa$.
\end{proof}
 
\begin{lemma}\label{ls-axiom} Assume $(\mathcal{D}, \M)$ satisfies Hypothesis \ref{hyp}. Then $\mathbf{D} = (\mathcal D, \leq_\M)$ satisfies the L\"owen\-heim-Skolem-Tarski Axiom \ref{defaec}.(A3) with $\LS(\D) \leq \kappa$. That is, if $B$ is a submodule of $A \in \mathcal{D}$, then there is $C \in \mathcal{D}$ such that $B \subseteq C \leq_\M A$ and $\| C \| \leq \| B \| + \kappa$. 
\end{lemma}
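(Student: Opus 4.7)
The plan is to combine the Hill Lemma (via Proposition \ref{small}) with the standard L\"owenheim--Skolem property of $(\rmod R, \leq_\M)$, whose L\"owenheim--Skolem number is at most $\operatorname{card}(R) + \aleph_0 \leq \kappa$. Fix a $\mathcal{D}^{\leq \kappa}$-filtration of $A$ and let $\mathcal{H}$ be the associated Hill family from Fact \ref{Hill}, applied with the regular cardinal $\kappa^+$.

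For $\M$ equal to the class of all embeddings, the conclusion is immediate from Proposition \ref{small}: pick $C \in \mathcal{H}$ with $B \subseteq C$ and $\|C\| \leq \|B\| + \kappa$. Then $C \in \mathcal{D}$ by Fact \ref{Hill}.(H3) applied to $0 \subseteq C$ (together with Fact \ref{s-ext}), and $C \leq_\M A$ holds trivially.

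For $\M$ equal to the class of all pure embeddings or all RD-embeddings, one application of Proposition \ref{small} is not enough, since members of $\mathcal{H}$ need not be $\M$-submodules of $A$. The idea is a back-and-forth construction: recursively build a chain
\[
B \subseteq C_0 \subseteq C_0' \subseteq C_1 \subseteq C_1' \subseteq \cdots
\]
of submodules of $A$, where each $C_n \in \mathcal{H}$ is produced by Proposition \ref{small} to contain $C_{n-1}'$, and each $C_n'$ is an $\M$-submodule of $A$ containing $C_n$ produced by the L\"owenheim--Skolem property of $(\rmod R, \leq_\M)$, with $\|C_n\|, \|C_n'\| \leq \|B\| + \kappa$ at every step. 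Setting $C := \bigcup_{n<\omega} C_n = \bigcup_{n<\omega} C_n'$, one has $C \in \mathcal{H}$ by Fact \ref{Hill}.(H2), hence $C \in \mathcal{D}$ as in the embedding case; moreover $C \leq_\M A$ because $C$ is also a directed union of the $\M$-submodules $C_n'$ of $A$. The size bound $\|C\| \leq \|B\| + \kappa$ is automatic.

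The only nontrivial ingredient is that pure and RD-submodules are closed under directed unions in $\rmod R$; this follows from the standard tensor criterion, since $-\otimes_R F$ commutes with direct limits and direct limits of short exact sequences are exact, so for each finitely presented (respectively cyclically presented) left module $F$ the sequence $0 \to C \to A \to A/C \to 0$ stays exact after tensoring with $F$. This is the main step to verify; everything else is bookkeeping with Proposition \ref{small} and Fact \ref{Hill}.
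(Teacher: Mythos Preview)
Your proposal is correct and follows essentially the same back-and-forth strategy as the paper: alternate between Proposition \ref{small} to land in the Hill family and the L\"owenheim--Skolem property of $(\rmod R,\leq_\M)$ to land in an $\M$-submodule, then take the union. The paper runs this uniformly for all three choices of $\M$ (and disposes of the trivial case $\|A\|\leq\|B\|+\kappa$ first), whereas you peel off the plain-embedding case and supply an explicit justification for closure of pure/RD-submodules under chain unions; these are cosmetic differences only.
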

\begin{proof} Let $B$ be submodule of $A \in \md$.  Let $\lambda = \| B  \| + \kappa$. If $\| A \| \leq \lambda$, we can take $C = A$.   

Assume $\| A \| > \lambda$. Since $A \in \md$, there is a $\md ^{\leq \kappa}$-filtration $\mathcal T$ of $A$. Let $\mathcal H$ be the Hill family from Fact \ref{Hill} (for the regular infinite cardinal $\kappa^+$ and $\mathcal C = \md ^{\leq \kappa}$) obtained from $\mathcal{T}$. Let $A_0 = 0$ and $C_0= B$. 

By induction on $0 < n < \omega$, we will construct a chain $\{ C_n \mid 0 < n < \omega \}$ consisting of $\M$-submodules of $A$ of cardinality $\leq \lambda$, and a chain $\{ A_n \mid 0 < n < \omega \} \subseteq \mathcal H$ consisting of submodules of $A$ of cardinality $\leq \lambda$ such that $C_n \subseteq A_{n+1} \subseteq C_{n+1}$ for all $n < \omega$. Then $C = \bigcup_{n < \omega} C_n = \bigcup_{n < \omega} A_n \in \md$ by Fact \ref{Hill}.(H2), $C$ is a $\M$ -submodule of $A$, and  $\| C \| \leq \lambda$. 

Let $n < \omega$. Since $\| C_n \| \leq \lambda$ it follows from Proposition \ref{small} that there exists $A_{n+1} \in \mathcal H$ such that $C_n \leq A_{n+1}$ and $\| A_{n+1} \| \leq \lambda$. Since $\operatorname{card}(R) + \aleph_0 \leq \lambda$, there exists a $\M$-submodule $C_{n+1}$ of $A$ containing $A_{n+1}$ such that $\| C_{n+1} \| \leq \lambda$.   
\end{proof}

Lemma \ref{ls-axiom} implies that the least cardinal $\kappa \geq \operatorname{card}(R) + \aleph_0$ such that $\mathcal D$ is $\kappa^+$-deconstructible provides an upper bound for the L\"{o}wenheim--Skolem number of $\mathbf{D}$. However, $\LS(\D)$ may be much smaller than this upper bound, as shown by the following example.

\begin{example}\label{restrictedflatML} \rm Let $R$ be a right hereditary non-right perfect ring. Denote by $\mathcal F \mathcal M$ the class of all $\aleph_1$-projective (= flat Mittag-Leffler) modules. Let $\lambda$ be \emph{any} cardinal $\geq  \operatorname{card}(R) (\geq \aleph_0)$. Let $C_\lambda$ be the class of all $\leq \lambda$-presented modules from $\mathcal F \mathcal M$, and $D_\lambda$ the class of all $C_\lambda$-filtered modules. Since $R$ is right hereditary, $C_\lambda$ is closed under submodules, and so is $D_\lambda$. 

By [GoTr12, Theorem 10.13], there exists a $\lambda^+$-presented module $M_{\lambda^+} \in \mathcal F \mathcal M$ such that $M_{\lambda^+} \notin D_\lambda$. Let $\mathcal D$ be the class of all $C_{\lambda^+}$-filtered modules. Then $\mathcal D$ is $\lambda^{++}$-deconstructible, but not $\lambda^+$-deconstructible. So $\kappa = \lambda^+$ is the least cardinal such that $\mathcal D$ with the submodule (pure submodule, or RD-submodule) relation satisfies our Hypothesis \ref{hyp}. However, in either case, $\LS(\D) =  \operatorname{card}(R) < \kappa$, because $\mathcal D$ is closed under submodules. 
\end{example}

\begin{cor}\label{wAEC} Assume $(\mathcal{D}, \M)$ satisfies Hypothesis \ref{hyp}. Then $\mathbf{D} = (\mathcal D, \leq_\M)$ satisfies all the conditions from Definition \ref{defaec} except possibly for the Tarski-Vaught Axioms \ref{defaec}.(A1).

Furthermore, $\mathbf{D}$ is an AEC with $\LS(\D) \leq \kappa$ if and only if $\md$ is closed under unions of $\M$-chains.
\end{cor}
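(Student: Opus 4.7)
The proof will be a direct corollary, essentially a bookkeeping exercise assembling facts established just above. The plan is to verify each axiom of Definition \ref{defaec} in turn, relying on what is already available, and then to deduce the ``furthermore'' by a straightforward logical rearrangement.

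First, I would point out that axioms (A1)(2) ($\leq_\M$-continuity), (A1)(3) (smoothness), and (A2) (coherence) hold for $\mathbf{D}$ because they hold in the ambient AEC $(\rmod R, \leq_\M)$ for each of the three choices of $\M$, and these properties pass to an arbitrary subclass $\mathcal{D}$ provided the relevant unions are already known to lie in $\mathcal{D}$: continuity and smoothness only assert that $\leq_\M$ interacts correctly with unions, and coherence only relates three modules that are already in $\mathcal{D}$. For pure embeddings, coherence is the standard observation that if $A \leq_p C$, $A \subseteq B \subseteq C$, and a finite $R$-linear system with right-hand side in $A$ is solvable in $B$, then it is solvable in $C$ and hence in $A$; the $RD$-case uses $A \cap Br \subseteq A \cap Cr = Ar$; and the case of plain inclusions is trivial. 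This was already remarked in the paragraph following Definition \ref{defaec}. Axiom (A3), with the bound $\LS(\D) \leq \kappa$, is exactly the content of Lemma \ref{ls-axiom}.

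Hence the only axiom whose status depends on extra information about $\mathcal{D}$ is (A1)(1), closure under unions of $\leq_\M$-chains, which proves the first assertion. For the ``furthermore'', one direction is immediate: if $\mathbf{D}$ is an AEC, then by definition $\mathcal{D}$ is closed under unions of $\leq_\M$-chains. Conversely, if $\mathcal{D}$ is closed under unions of $\M$-chains, then (A1)(1) holds, and together with the axioms already verified above, this gives that $\mathbf{D}$ is an AEC; the bound $\LS(\D) \leq \kappa$ is again Lemma \ref{ls-axiom}.

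There is no substantive obstacle here: the result is purely a corollary packaging Lemma \ref{ls-axiom} together with the standard behavior of inclusions, pure inclusions, and $RD$-inclusions in $\rmod R$. The only small point worth making explicit is that membership of $\mathcal{D}$ in the ambient AEC is enough for continuity, smoothness, and coherence to transfer, since each of those axioms quantifies only over objects and chains already drawn from $\mathcal{D}$, so no additional closure of $\mathcal{D}$ inside $\rmod R$ is required for their verification.
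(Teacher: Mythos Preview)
Your proposal is correct and matches the paper's approach: the paper gives no separate proof of this corollary, having already remarked (in the paragraph following Definition \ref{defaec}) that continuity, smoothness, and coherence hold for $\mathbf{D}$, and having established (A3) with $\LS(\D)\leq\kappa$ in Lemma \ref{ls-axiom}. Your write-up simply makes these observations explicit and assembles them, which is exactly what is intended.
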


In the case when $\md$ is closed under direct summands, the condition of $\mathbf{D}$ being an AEC can be phrased in a stronger form. In order to see this we recall the following basic property of deconstructible abstract elementary classes of modules from \cite{satr}:

\begin{fact}\label{decaec}
Let $(\mathcal D, \lesssim_{\mathbf{A}})$ be an abstract elementary class of modules such that $\mathcal D$ is deconstructible and closed under direct summands, and $\lesssim_{\mathbf{A}}$ refines direct summands. Then $\mathcal D$ is closed under direct limits. 
\end{fact}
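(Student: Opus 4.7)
My plan is to prove closure of $\mathcal{D}$ under direct limits by realizing any such limit as the union of a $\lesssim_{\mathbf{A}}$-continuous chain of modules in $\mathcal{D}$, so that AEC axiom (A1) yields the conclusion. Fix $M = \varinjlim_{i \in I} M_i$ with each $M_i \in \mathcal{D}$. First I would reduce to a well-ordered directed system by choosing a cofinal well-ordered sequence in $I$, so without loss of generality $I = \delta$ is an ordinal. The ambient module $N = \bigoplus_{\alpha < \delta} M_\alpha$ then lies in $\mathcal{D}$ by Fact \ref{s-ext}, and the canonical map $p\colon N \twoheadrightarrow M$ is a pure epimorphism.

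The second step is to build up $M$ as a $\lesssim_{\mathbf{A}}$-continuous chain of submodules in $\mathcal{D}$. I would start from a $\mathcal{D}^{<\kappa}$-filtration of $N$ witnessing deconstructibility, apply the Hill lemma (Fact \ref{Hill}) to obtain the distributive lattice $\mathcal{H}$ of submodules, and use Proposition \ref{small} to produce, for each finite subset of $M$, a Hill-family member $C \in \mathcal{H}$ of cardinality at most $\kappa$ containing a preimage. Closing under continuous unions of length up to $\|M\|$ and passing to $p$-images, one obtains a continuous chain $(D_\alpha)_{\alpha < \tau}$ of submodules of $M$ with $D_\alpha = p(C_\alpha)$ for $C_\alpha \in \mathcal{H}$, and since each such $C_\alpha$ is $\mathcal{D}^{<\kappa}$-filtered by (H3) it lies in $\mathcal{D}$. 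The refinement of direct summands by $\lesssim_{\mathbf{A}}$, together with the fact that within the Hill lattice $C_\alpha \subseteq C_{\alpha+1}$ has quotient in $\mathcal{D}$, will be used to promote each inclusion to $D_\alpha \lesssim_{\mathbf{A}} D_{\alpha+1}$.

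The main obstacle I foresee is arranging that the quotient $p(C_\alpha)$ actually lies in $\mathcal{D}$, since pure quotients of modules in $\mathcal{D}$ need not stay in $\mathcal{D}$. This is where both hypotheses --- closure of $\mathcal{D}$ under direct summands and refinement of direct summands by $\lesssim_{\mathbf{A}}$ --- become indispensable. The strategy is to choose $C_\alpha$ compatibly with $\ker p$, ensuring $p\rest C_\alpha$ is a split epimorphism (equivalently, that $\ker p \cap C_\alpha$ is a direct summand of $C_\alpha$); this compatibility can be engineered using the distributive lattice structure (H2) of $\mathcal{H}$ to intersect $C_\alpha$ with suitable Hill-family members inside $\ker p$. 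Once $D_\alpha$ is a direct summand of $C_\alpha \in \mathcal{D}$, it lies in $\mathcal{D}$ by the summand-closure hypothesis, and the chain $(D_\alpha)$ is $\lesssim_{\mathbf{A}}$-continuous by summand refinement; AEC axiom (A1)(1)--(3) then delivers $M = \bigcup_{\alpha < \tau} D_\alpha \in \mathcal{D}$.
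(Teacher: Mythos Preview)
The paper does not give a proof of this statement; it is quoted as a fact from \cite{satr}. Your outline, however, has two genuine gaps.

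First, the reduction to a well-ordered index set is not valid as stated: a directed set need not contain a cofinal chain (the finite subsets of an uncountable set, ordered by inclusion, is the standard counterexample), so you cannot simply ``choose a cofinal well-ordered sequence in $I$''. A correct reduction requires either Iwamura's decomposition of a directed set into a continuous well-ordered chain of smaller directed subsets together with transfinite induction, or a separate argument that closure under direct sums and countable direct limits already forces closure under arbitrary direct limits.

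Second, and more seriously, the crucial step --- arranging that $p\upharpoonright C_\alpha$ splits --- is not justified. The kernel of the canonical pure epimorphism $p\colon N\twoheadrightarrow M$ has no a priori relationship to the Hill family $\mathcal H$ built from a $\mathcal D^{<\kappa}$-filtration of $N$, so the suggestion to ``intersect $C_\alpha$ with suitable Hill-family members inside $\ker p$'' has no force: there is no mechanism producing such members, nor any reason the resulting intersection would be a direct summand of $C_\alpha$. Moreover, even if each $D_\alpha$ happened to lie in $\mathcal D$, you need $D_\alpha$ to be a direct summand of $D_{\alpha+1}$ in order to invoke that $\lesssim_{\mathbf A}$ refines direct summands and conclude $D_\alpha \lesssim_{\mathbf A} D_{\alpha+1}$; knowing only that $C_{\alpha+1}/C_\alpha\in\mathcal D$ (property (H3)) does not give this. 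These are precisely the places where real work is needed, and the sketch does not indicate how to carry it out.
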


\begin{cor}\label{corAEC}  Assume that $(\mathcal{D}, \M)$ satisfies Hypothesis \ref{hyp} and $\md$ is closed under direct summands. Then $(\md, \leq_\M)$ is an AEC if and only if $\md$ is closed under direct limits in $\rmod R$. 

In particular, whether or not $(\md, \leq_\M)$ is an AEC depends only on the closure properties of $\md$, not on the particular choice of the partial order $\leq _\M$.  
\end{cor}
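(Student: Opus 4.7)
The plan is to combine Corollary \ref{wAEC} with Fact \ref{decaec} and observe that under the additional assumption of closure under direct summands, the missing axiom (A1)(1) from Corollary \ref{wAEC} is equivalent to closure under direct limits in $\rmod R$. By Corollary \ref{wAEC}, under Hypothesis \ref{hyp} the only AEC axiom that $\mathbf D = (\md, \leq_\M)$ can fail is closure of $\md$ under unions of $\leq_\M$-chains. So the task reduces to showing: $\md$ is closed under unions of $\leq_\M$-chains if and only if $\md$ is closed under direct limits in $\rmod R$.

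For the implication from closure under direct limits to the AEC property, I would note that any continuous $\leq_\M$-increasing chain $(A_i : i < \delta)$ in $\md$ is a directed system in $\rmod R$ with inclusion maps, whose direct limit coincides with the union $\bigcup_{i < \delta} A_i$. Hence closure under direct limits immediately yields closure under unions of $\leq_\M$-chains, and Corollary \ref{wAEC} then gives that $\mathbf D$ is an AEC. No additional argument is needed here.

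For the converse, I would invoke Fact \ref{decaec} directly. Its four hypotheses are: $\mathbf D$ is an AEC (now assumed), $\md$ is deconstructible (built into Hypothesis \ref{hyp}), $\md$ is closed under direct summands (the additional assumption of the corollary), and $\leq_\M$ refines direct summands. The last holds because, as noted immediately after the definition of ``refines direct summands'' in Section 2.2, any of the three admissible choices for $\M$ (all embeddings, all $RD$-embeddings, or all pure embeddings) produces an order that refines direct summands. Fact \ref{decaec} then yields that $\md$ is closed under direct limits.

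The ``in particular'' clause follows at once: deconstructibility, closure under direct summands, and closure under direct limits are conditions purely on the class $\md$ and make no reference to $\leq_\M$, so by the equivalence just established they determine whether $(\md, \leq_\M)$ is an AEC independently of which admissible choice of $\leq_\M$ is made. I do not anticipate any real obstacle; the substantive content is already contained in Corollary \ref{wAEC} and Fact \ref{decaec}, and the present corollary only assembles them together with the trivial identification of unions of $\leq_\M$-chains as direct limits.
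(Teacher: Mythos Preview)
Your proposal is correct and is exactly the intended argument: the paper states Corollary~\ref{corAEC} without proof precisely because it is immediate from Corollary~\ref{wAEC} and Fact~\ref{decaec}, together with the observation that $\leq_\M$ refines direct summands for each admissible choice of $\M$. You have supplied all the details the paper leaves implicit, and there are no gaps.
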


We now pause to illustrate the variety of settings that arise for $\mathbf{D}$.    

\begin{example}\label{q2} \rm \
\begin{enumerate}
\item For each $n \geq 0$, the classes $\mathcal{F}_n$ (for any ring $R$) and $\mathcal{I}_n$ (for $R$ right noetherian) are deconstructible and closed under direct limits. Hence they satisfy Hypothesis \ref{hyp} and form AECs for any choice of the embeddings $\M$. 
\item Let $R$ be any ring, $\mathcal C$ a class consisting of (some) pure-injective modules and $\mathcal D = {}^{\perp_{\infty}} \mathcal C := \{ M \in \rmod R \mid \Ext iRMC = 0 \hbox { for all } i > 0 \hbox{ and all } C \in \mathcal C \}$. Then the class $\mathcal D$ is deconstructible and closed under direct limits \cite[6.21]{GT}. Hence it satisfies Hypothesis \ref{hyp} and forms an AEC for any choice of the embeddings $\M$. 
\item The class $\mathcal{P}_n$ is deconstructible for any ring $R$ and any $n \geq 0$. Hence it satisfies Hypothesis \ref{hyp} for any choice of the embeddings $\M$. As $\mathcal{P}_n$ is closed under direct summands, in order for $\mathcal{P}_n$ to be an AEC for a choice of $\M$, $\mathcal{P}_n$ has to be closed under direct limits by Corollary \ref{corAEC}. This is quite a strong condition:

For $n = 0$, it just says that $\mathcal{P}_0 = \mathcal{F}_0$, i.e., $R$ is a right perfect ring (see for example \cite[28.4]{AF}), and we are in the setting of part (1) for $n = 0$.

If $R$ is an integral domain then $\varinjlim \mathcal P_1 = \mathcal F_1$, so $\mathcal{P}_1$ is closed under direct limits, iff $\mathcal P_1 = \mathcal F_1$, that is, $R$ is an almost perfect domain (cf.\ \cite[Theorems 9.4 and 9.10]{GT}). So we are again in the setting of part (1), but for $n =1$.
\end{enumerate}
\end{example}

\begin{remark} \rm
Assume $\mathcal A$ is any class of modules and $\M$ is either the class of all embeddings, all RD-embeddings, or all pure embeddings. Assume moreover that $\mathcal A$ is closed under $\M$-submodules. Then $(\mathcal A,\leq_{\M})$ is an AEC, iff $\mathcal A$ is closed under unions of $\M$-chains. This simple fact yields many examples of AECs of modules, some of which do not satisfy Hypothesis \ref{hyp}. 

An interesting example is provided by the class $\mathcal F \mathcal M$ of all $\aleph_1$-projective (= flat Mittag-Leffler) modules with pure embeddings over any non-right perfect ring $R$. Then $\mathbf{FM} = (\mathcal F \mathcal M,\leq_p)$ is an AEC with $\LS(\mathbf{FM}) = \operatorname{card}(R) (\geq \aleph_0)$, because $\mathcal F \mathcal M$ is closed under pure submodules and unions of pure chains (cf.\ \cite[Corollary 3.20(a)]{GT}). However, since the ring $R$ is not right perfect, the class $\mathcal F \mathcal M$ is neither deconstructible nor closed under direct limits (cf.\ \cite[Theorem 10.13]{GT}).   
\end{remark}

\begin{remark} \rm 
By Corollary \ref{wAEC}, closure under unions of $\M$-chains is sufficient for $\mathbf{D} = (\md, \leq_\M)$ satisfying Hypothesis \ref{hyp} to be an AEC. However, the weaker notion of having chain bounds is not sufficient in general. Recall that $(\mathcal D, \lesssim_{\mathbf{A}})$ has \emph{chain bounds}, if for every increasing continuous $\lesssim_{\mathbf{A}}$-chain of modules $\{ M_i \mid i < \delta \} \subseteq \mathcal D$, there is $N \in \mathcal D$ such that $M_i \lesssim_{\mathbf{A}}  N$ for all $i< \delta$. 

For an example, let $R$ be a valuation domain such that $R$ is not noetherian. Denote by $Q$ the quotient field of $R$, and let $\mathcal D$ be the class of all $\{ R, Q \}$-filtered modules (the modules in $\mathcal D$ are called \emph{strongly flat}, cf.\ \cite[Theorem 7.64]{GT}). Clearly, $\mathcal D$ is deconstructible, and it is closed under direct summands by \cite[Corollary 7.51]{GT}. As $\mathcal P_0 \subsetneq \mathcal D \subsetneq \mathcal F _0$ by \cite[Theorem 7.56]{GT}, $\mathcal D$ is not closed under direct limits. Hence $\mathbf{D}$ is not an AEC for any particular choice of the partial order $\leq _\M$ by Corollary \ref{corAEC}. However, $(\mathcal D, \leq)$ has chain bounds, as we can always take $N = E(\bigcup_{i < \delta} M_i)$, the injective hull of the union of a chain $\{ M_i \mid i < \delta \} \subseteq \mathcal D$. Since the union is flat, its injective hull is isomorphic to a direct sum of copies of $Q$, hence $N \in \mathcal D$. 
\end{remark}
\newpage
\subsection{Stability} 
Let $\mathbf{A} = (\mathcal A, \lesssim_{\mathbf{A}})$ be an abstract class of modules. We say that $f: A_0 \to A_1$ is an \emph{$\mathbf{A}$-embedding} if $f$ is a monomorphism, $A_0, A_1 \in \mathcal{A}$  and $f(A_0) \lesssim_{\mathbf{A}} A_1$.  We write $f: A_0 \xrightarrow[B]{} A_1$ if $f$ is an $\mathbf{A}$-embedding, $B$ a submodule of $A_0$, and $f \upharpoonright B = \id_B$.

Types for arbitrary AECs were introduced by Shelah  \cite{sh300}. These extend first-order types to this setting. We introduce types for ACs of modules.

\begin{defin}\label{gtp-def} \rm
  Let $\mathbf{A}= (\mathcal{A}, \lesssim_{\mathbf{A}})$ be an abstract class of modules.
  
  \begin{enumerate}
    \item Let $\A^3$ be the set of triples of the form $(\bar{b}, A,
N)$, where $N \in \mathcal{A}$, $A$ is a submodule of $N$, and $\bar{b}$ is a tuple (possibly infinite) from $N$. 
    \item For $(\bar{b}_1, A_1, N_1), (\bar{b}_2, A_2, N_2) \in \A^3$, we
say that  $(\bar{b}_1, A_1, N_1)$, $(\bar{b}_2, A_2, N_2)$ are \emph{atomically related} (in $\A$), denoted by $(\bar{b}_1, A_1, N_1)E_{\text{at}}^{\A} (\bar{b}_2, A_2, N_2)$, if $A
:= A_1 = A_2$, and there exist $\A$-embeddings $f_i : N_i \xrightarrow[A]{} N$ for $i = 1, 2$ such that  
$f_1 (\bar{b}_1) = f_2 (\bar{b}_2)$ and $N \in \mathcal{A}$.
    \item Note that $E_{\text{at}}^{\A}$ is a symmetric and
reflexive relation on $\A^3$. We let $E^{\A}$ be the transitive
closure of $E_{\text{at}}^{\A}$.
    \item For $(\bar{b}, A, N) \in \A^3$, let $\gtp_{\A} (\bar{b} / A;
N) := [(\bar{b}, A, N)]_{E^{\A}}$, the $E^{\A}$-equivalence class of $(\bar{b}, A, N)$. We call such an equivalence class the
\emph{(Galois) type} of $\bar{b}$ over $A$ in $N$. Usually, $\A$ will be clear from the context and we will omit it.
\item  For $p=\gtp_{\A} (\bar{b} / M; N)$ and $M_0$ a submodule of  $M$, the \emph{restriction} of $p$ to $M_0$ is $\gtp_{\A} (\bar{b} / M; N)\upharpoonright {M_0}:= [(\bar{b}, M_0, N)]_E$
\item For $M \in \mathcal{A}$, $\gS_{\A}(M)= \{  \gtp_{\A}(b / M; N) \mid M \lesssim_{\mathbf{A}} N\in \mathcal{A} \text{ and }  b \text{ is an element of } N\} $.
.
  \end{enumerate}
\end{defin}

 Let $M \in \A$.
Notice that even though there is a proper class of $N$ extending $M$,  $\gS_{\A}(M)$ is a set if $\A$ satisfies the L\"owenheim-Skolem-Tarski Axiom \ref{defaec}.(A3). Moreover, we have the following:

\begin{fact} Assume $\A$ is an abstract class of modules that  satisfies the L\"owenheim-Skolem-Tarski Axiom \ref{defaec}.(A3). If  $M \in \A$ with $\| M \| \geq  LS(\A)$, then $| \gS_{\A}(M) | \leq 2^{\|M\|}$. 
\end{fact}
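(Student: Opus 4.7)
The plan is a downward L\"owenheim--Skolem and cardinal-counting argument of AEC flavor, relying only on axiom \ref{defaec}.(A3). First I would show every type $p = \gtp_{\A}(b/M;N) \in \gS_{\A}(M)$ admits a small representative. Given such $p$, with $M \lesssim_{\mathbf{A}} N \in \mathcal{A}$ and $b \in N$, let $A_0$ be the submodule of $N$ generated by $M \cup \{b\}$; then $\|A_0\| \leq \|M\| + \operatorname{card}(R) + \aleph_0 = \|M\|$, using $\|M\| \geq \LS(\A) \geq \operatorname{card}(R) + \aleph_0$. Applying axiom (A3) to $A := A_0$ and $B := N$ produces $N' \in \mathcal{A}$ with $A_0 \subseteq N' \lesssim_{\mathbf{A}} N$ and $\|N'\| \leq \|A_0\| + \LS(\A) = \|M\|$. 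I would then verify $(b, M, N') \, E_{\text{at}}^{\A} \, (b, M, N)$ by taking $N^* := N$, $f_2 := \id_N$ and $f_1$ the inclusion $N' \hookrightarrow N$: both maps fix $M$ and send $b$ to itself, $f_2$ is trivially an $\A$-embedding, and $f_1$ is an $\A$-embedding because $N' \lesssim_{\mathbf{A}} N$ (the crucial consequence of (A3), beyond a mere submodule inclusion). Hence $p$ has a representative $(b, M, N')$ with $M \subseteq N' \in \mathcal{A}$, $b \in N'$, and $\|N'\| \leq \|M\|$.

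Next I would count such representatives up to isomorphism over $M$ fixing $b$, noting that any such isomorphism is itself an $E_{\text{at}}^{\A}$-equivalence, so distinct types correspond to distinct equivalence classes. Fix a set $U \supseteq M$ of cardinality $\|M\|$ with $|U \setminus M| = \|M\|$. Every admissible $N'$ can be arranged to have universe a subset $V \subseteq U$ containing $M$, giving at most $2^{\|M\|}$ choices for the underlying set. On each such $V$, the $R$-module structure extending that of $M$ is determined by a pair of functions $V \times V \to V$ and $V \times R \to V$, giving at most $\|M\|^{\|M\|} = 2^{\|M\|}$ possibilities (since $\operatorname{card}(R) \leq \|M\|$). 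Multiplying by the at most $\|M\|$ choices for $b \in V$ keeps the bound at $2^{\|M\|}$, yielding $|\gS_{\A}(M)| \leq 2^{\|M\|}$.

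The only delicate step is the verification in the first paragraph that passing from $N$ to the smaller $N'$ preserves the type: this hinges on the inclusion $N' \hookrightarrow N$ being an $\A$-embedding, which is exactly what axiom (A3) provides, so no appeal to coherence (A2) is needed. The remainder is routine cardinal arithmetic using $\operatorname{card}(R) \leq \LS(\A) \leq \|M\|$.
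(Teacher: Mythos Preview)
Your proof is correct and follows essentially the same approach as the paper's: use the L\"owenheim--Skolem--Tarski axiom to replace each ambient model $N$ by one of size $\|M\|$, then bound the number of isomorphism classes of structures of size $\|M\|$ equipped with constants for $M$ and the realizing element. The paper phrases this as a contradiction via pigeonhole (assuming more than $2^{\|M\|}$ types and finding two whose small representatives are isomorphic over $M$), whereas you count directly, but the substance is identical.
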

\begin{proof}
Let $M \in \A$ and  $\lambda := \|M\|$.  Assume for the sake of contradiction that $| \gS_{\A}(M) | > 2^{\lambda}$. Let  $\{ p_i = \gtp(a_i/M;N_i) \mid i < (2^{\lambda})^+ \}$ be an enumeration without repetitions in  $\gS_{\A}(M)$.  We may assume that $\| N_i \| = \lambda$ for every $i < (2^{\lambda})^+$, as otherwise we could apply the  L\"owenheim-Skolem-Tarski Axiom \ref{defaec}.(A3) to $M \cup \{a_i\}$ to get a smaller $N_i$. 

Let $\{ m_\alpha \mid \alpha < \lambda \}$ be  an enumeration of $M$. Let $\{c\} \cup \{ d_\alpha \mid \alpha < \lambda \}$ be new constant symbols not in the language of modules and  $ \Delta = \{ (N, c^N, (d_\alpha^N)_{\alpha < \lambda})/ \cong  \mid N \in \mathcal{A} \text{ with } \|N\| = \lambda, c^N \in N \text{ and } d_\alpha^N \in N \text{ for all } \alpha < \lambda \}$.

Let $\Psi: (2^{\lambda})^+ \to \Delta$ be given by $\Psi(i)= (N_i, a_i, (m_\alpha)_{\alpha < \lambda})/\cong$. Since $\| \Delta \| \leq 2^\lambda$. By the pigeon-hole principle there are $ i \neq j \in (2^{\lambda})^+ $ and  $f: (N_i, a_i, (m_\alpha)_{\alpha < \lambda}) \cong (N_j, a_j, (m_\alpha)_{\alpha < \lambda})$. Observe that $f: N_i \to N_j$ is an isomorphism, $f(a_i) = a_j$ and $f \upharpoonright M = \id_{M}$. Hence $p_i = p_j$. This contradicts the fact that $p_i \neq p_j$.
\end{proof}
%\emph{Algebra proof}
%Let $M \in \A$, $\lambda = \|M\|$, and let $N\in \mathcal{A}$ be such that $M \lesssim_{\mathbf{A}} N$ and $b \in N$. Note that the L\"owenheim-Skolem-Tarski Axiom \ref{defaec}.(A3) implies that the $E^{\A}$-equivalence class of $(b, M, N)$ contains a triple of the form $(b, M, N^\prime)$ where $N^\prime \in \mathcal{A}$ satisfies $M \lesssim_{\mathbf{A}} N^\prime$, $b \in N^\prime$, and $\|N^\prime\| = \lambda$. Since each such module $N^\prime$ is a homomorphic image of the free module $R^{(\lambda)}$ of cardinality $\lambda$, there are at most $2^\lambda$ triples of the latter form, whence $| \gS_{\A}(M) | \leq 2^{\lambda}$.

%\emph{Old proof:}
%

\medskip 
\begin{remark} \rm As $\mathbf{D} = (\md, \leq_\M)$  satisfies the L\"owenheim-Skolem-Tarski Axiom \ref{defaec}.(A3) when  $(\M, \mathcal{D})$ satisfies Hypothesis \ref{hyp}, $ \gS_{\D}(B)$ is always a set for every $B \in \md$.
\end{remark}

\medskip
The following is the key notion of the paper. For AECs, it was introduced by Shelah.

\begin{defin} \rm
 An AC of modules $\A$ is \emph{stable in $\lambda$}  if for any $M \in
\mathcal{A}$ with $\| M \| = \lambda$, $| \gS_{\A}(M) | \leq \lambda$. An AC of modules is \emph{stable} if there is a $\lambda \geq \LS(\A)$ such that $\A$ is stable in $\lambda$.
\end{defin}

\subsection{Amalgamation}\label{amalg} A pair of monomorphisms $f_1 : A_0 \to A_1$ and $f_2 : A_0 \to A_2$ in $\rmod R$ is called a \emph{span}. Given a span $(f_1,f_2)$, we can form its pushout $(P, h_1,h_2)$ in $\rmod R$ and obtain the following commutative diagram with exact rows and columns

$$\begin{CD}
@.     0@.        0      @.     @.\\
@.     @VVV       @VVV   @.     @.\\
0@>>>  A_{0}@>{f_1}>> A_{1}@>>>  B_{1}@>>>  0\\
@.     @V{f_2}VV  @V{h_1}VV   @|     @.\\
0@>>>  A_{2}@>{h_2}>>      P@>>>  B_{1}@>>>  0\\
@.     @VVV       @V{\pi}VV @.  @.\\
@.     B_{2} @=       B_{2}      @.     @.\\
@.     @VVV       @VVV   @.     @.\\
@.     0@.        0.      @.     @.
\end{CD}$$
  
Recall that $P \cong (A_1 \oplus A_2)/I$ where $I = \{ (f_1(a),-f_2(a)) \mid a \in A_0 \}$, $h_1 (a_1) = (a_1,0) + I$, and $h_2 (a_2) = (0,a_2) + I$. In particular, $P = \im {h_1} + \im {h_2}$. By the characterizations above, it easily follows that if $f_1$ is a pure embedding (RD-embedding), then so is $h_2$, and similarly for $f_2$ and $h_1$. 

Let $(f_1,f_2)$ be a span. An \emph{amalgam} of $(f_1,f_2)$ in $\rmod R$ is a triple $(A_3, g_1, g_2)$ such that $g_1$ and $g_2$ are monomorphisms, and the following diagram is commutative

 \[ \begin{tikzcd}
A_1 \arrow[r, "g_1"]                         & A_3                   \\
A_0 \arrow[u, "f_1"] \arrow[r, "f_2"'] & A_2 \arrow[u, "g_2"']
\end{tikzcd} \]

Any amalgam of the span $(f_1,f_2)$ factors uniquely through the pushout $(P, h_1,h_2)$ of $(f_1,f_2)$ in $\rmod R$, that is, there is a unique homomorphism $t \in \Hom RP{A_3}$ such that $g_1 = t h_1$ and $g_2 = t h_2$. Notice however that even though $g_1$ and $g_2$ are monomorphisms,  $t$ is not a monomorphism in general. 

\begin{example}\label{ex1} \rm
Let $R$ be an integral domain which is not a field, and $Q$ be its quotient field. Let $A_0 = R$, $A_1 = A_2 = A_3 = Q$, $f_1 = f_2 : R \hookrightarrow Q$, and $g_1 = g_2 = id_{Q}$. Then $(Q, g_1, g_2)$ is an amalgam of the span $(f_1,f_2)$, $P \cong (Q \oplus Q)/\{ (r,-r) \mid r \in R \} $, and $t$ is not a monomorphism as $P$ is not torsion-free. 
\end{example}

\medskip
Assume $(A_3, g_1,g_2)$ is an amalgam of a span $(f_1,f_2)$. Then $(A_3,g_1,g_2)$ is a \emph{disjoint amalgam} in case $g_1(A_1) \cap g_2(A_2) = g_1f_1(A_0) (= g_2f_2(A_0))$. We note the following simple characterization:

\begin{lemma}\label{test} Let $(A_3,g_1,g_2)$ be an amalgam of the span $(f_1,f_2)$ and $(P, h_1, h_2)$ be the pushout of $(f_1,f_2)$.

Then $(A_3,g_1,g_2)$ is a disjoint amalgam, iff the unique homomorphism $t$ from the pushout $P$ to $A_3$ is monic. 

In particular, if $(A_3,g_1,g_2)$ is the pushout of $(f_1,f_2)$ in $\rmod R$, then the amalgam $(A_3,g_1,g_2)$ is disjoint. 
\end{lemma}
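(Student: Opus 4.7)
The plan is to leverage the explicit description of the pushout given just above the statement: $P \cong (A_1 \oplus A_2)/I$ with $I = \{(f_1(a),-f_2(a)) : a \in A_0\}$ and $h_1(a_1)=(a_1,0)+I$, $h_2(a_2)=(0,a_2)+I$. Under this identification, the universal property forces the unique factoring map to be
\[
t\bigl((a_1,a_2)+I\bigr) \;=\; g_1(a_1)+g_2(a_2),
\]
since $g_i = t\, h_i$ and $P$ is generated by $\im h_1 \cup \im h_2$. With $t$ written this explicitly, both implications reduce to a short chase that uses only the injectivity of $g_1$ and $g_2$ together with the commutativity relation $g_1 f_1 = g_2 f_2$.

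For the forward direction, I would take a class in $\ker t$, i.e.\ $g_1(a_1) + g_2(a_2) = 0$. Then $g_1(a_1) = -g_2(a_2)$ lies in $g_1(A_1) \cap g_2(A_2)$, which by disjointness equals $g_1 f_1(A_0) = g_2 f_2(A_0)$. Thus $g_1(a_1) = g_1 f_1(a)$ for some $a \in A_0$; injectivity of $g_1$ gives $a_1 = f_1(a)$, and from $g_2(-a_2) = g_1 f_1(a) = g_2 f_2(a)$ plus injectivity of $g_2$ we get $-a_2 = f_2(a)$. Hence $(a_1,a_2) = (f_1(a),-f_2(a)) \in I$, so the class is zero and $t$ is monic.

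Conversely, if $t$ is monic and $x = g_1(a_1) = g_2(a_2) \in g_1(A_1) \cap g_2(A_2)$, then $t(h_1(a_1) - h_2(a_2)) = 0$ forces $h_1(a_1) = h_2(a_2)$ in $P$, i.e.\ $(a_1,-a_2) \in I$. This yields $a \in A_0$ with $a_1 = f_1(a)$ (and $a_2 = f_2(a)$), so $x = g_1 f_1(a) \in g_1 f_1(A_0)$; the reverse containment $g_1 f_1(A_0) \subseteq g_1(A_1) \cap g_2(A_2)$ is immediate from commutativity. The ``in particular'' clause is a one-liner: when $(A_3,g_1,g_2)$ \emph{is} the pushout, the uniqueness part of its universal property forces $t = \id_P$, which is trivially monic, so the amalgam is disjoint.

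I do not foresee a genuine obstacle here; the whole argument is a diagram chase on the concrete model of $P$. The only delicate point is keeping the sign convention in the definition of $I$ straight, so that ``$h_1(a_1) - h_2(a_2) = 0$ in $P$'' translates to ``$(a_1,-a_2) \in I$'' and in turn produces a \emph{single} witness $a \in A_0$ providing both $a_1 = f_1(a)$ and $a_2 = f_2(a)$.
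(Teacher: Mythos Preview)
Your proof is correct and follows essentially the same approach as the paper's own proof: both directions are the same diagram chase using the explicit coset description of $P$, with the paper phrasing elements of $P$ as $h_1(a_1)-h_2(a_2)$ where you write $(a_1,a_2)+I$ and use the formula $t((a_1,a_2)+I)=g_1(a_1)+g_2(a_2)$. The only cosmetic difference is that you spell out the ``in particular'' clause (via $t=\id_P$), which the paper leaves implicit.
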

\begin{proof} Assume the amalgam is disjoint and let $x \in \ker t$. Since $P = \im {h_1} + \im {h_2}$, $x = h_1(a_1) - h_2(a_2)$ for some $a_i \in A_i$ ($i = 1,2$). So $0 = t h_1(a_1) - t h_2(a_2) = g_1(a_1) - g_2(a_2)$. Since the maps $f_i$, $g_i$ ($i = 1, 2$) are monic, the disjointness yields $g_1(a_1) = g_1f_1(a_0)$ and $g_2(a_2) = g_2f_2(a_0)$ for some $a_0 \in A_0$, and $a_1 = f_1(a_0)$ and $a_2 = f_2(a_0)$. Hence $x = h_1f_1(a_0) - h_2f_2(a_0) = 0$.

Conversely, assume $t$ is monic and let $y \in g_1(A_1) \cap g_2(A_2)$. Then $y = t h_1(a_1) = t h_2(a_2)$ for some $a_1 \in A_1$ and $a_2 \in A_2$, whence $h_1(a_1) = h_2(a_2)$. So $(a_1,-a_2) \in I$, whence $a_1 = f_1(a_0)$ and $a_2 = f_2(a_0)$ for some $a_0 \in A_0$. Then $y = g_1(f_1(a_0)) \in g_1f_1(A_0)$.        
\end{proof}

\begin{defin} \rm
Let $\mathbf{A} = (\mathcal A, \lesssim_{\mathbf{A}})$ be an abstract class of modules. 

\begin{enumerate}
\item $\mathbf{A}$ is \emph{closed under pushouts} if for every span $(f_1: A_0 \to A_1, f_2: A_0 \to A_2)$ with $A_0, A_1, A_2 \in \mathcal{A}$ and $f_1, f_2$ $\mathbf{A}$-embeddings, the pushout $(P, g_1, g_2)$  (in $\rmod R$) is an  amalgam of $(f_1, f_2)$ such that $P \in \mathcal{A}$ and $g_1, g_2$ are $\mathbf{A}$-embeddings.

\item Let $\mu \leq \lambda$ be  infinite cardinals. Then $\mathbf{A}$ has the \emph{amalgamation property} in $[\mu, \lambda]$, if for every span $(f_1: A_0 \to A_1, f_2: A_0 \to A_2)$ with $A_i \in \mathcal{A}$ and $\mu \leq \|A_i \| \leq \lambda$ for $i = 0, 1, 2$, and $f_1, f_2$ $\mathbf{A}$-embeddings, there is an  amalgam $(A_3, g_1, g_2)$ of $(f_1, f_2)$ such that $A_3 \in \mathcal{A}$ with $\mu \leq \| A_3 \| \leq \lambda$ and $g_1, g_2$ are $\mathbf{A}$-embeddings. If $\lambda =\mu$, we say that  $\mathbf{A}$ has the \emph{amalgamation property} in $\lambda$.

\item $\mathbf{A}$ has the \emph{amalgamation property}, if $\mathbf{A}$ has the amalgamation property in $[\aleph_0, \lambda]$ for every infinite cardinal $\lambda$.

\end{enumerate}
\end{defin}

It is clear that if $\mathbf{A}$ is closed under pushouts, then $\mathbf{A}$ has the amalgamation property. Below we provide examples where the converse fails. We will be mostly interested in the amalgamation property.

We focus on the pairs $(\mathcal{D}, \M)$ satisfying Hypothesis \ref{hyp}. The answer to the question of whether $(\mathcal D, \leq_\M)$ is closed under pushouts or has the amalgamation property depends substantially on the particular choice of the class $\mathcal D$ and the class of embeddings $\mathcal M$, as well as on properties of the ring $R$. We will illustrate this by several examples. 

\begin{example}\label{illus1} \rm $(\mathcal F _0, \leq_p)$ and $(\mathcal T \mathcal F, \leq_{RD})$ over any ring $R$ are closed under pushouts and have the amalgamation property.
\end{example}

\begin{example}\label{illus2} \rm  Let $R$ be an integral domain which is not a field. Then $(\mathcal T \mathcal F, \subseteq)$ is not closed under pushouts by Example \ref{ex1}. However, $(\mathcal T \mathcal F, \subseteq)$ has the amalgamation property: an amalgam is obtained as the factor of the pushout by its torsion part. 

Since both $R$ and its quotient field $Q$ are flat modules, Example \ref{ex1} also shows that $(\mathcal F_0, \subseteq)$ is not closed under pushouts. However, $(\mathcal F_0, \subseteq)$ has the amalgamation property: an amalgam is obtained as the injective envelope of the factor of the pushout by its torsion part. This follows from the fact that injective envelopes of torsion-free modules are the flat modules of the form $Q^{(\kappa)}$ for a cardinal $\kappa$.    
\end{example}  

\begin{example}\label{illus3} \rm  $(\mathcal F_0, \subseteq)$ need not have amalgamation in general. We present an example where $R$ is a particular hereditary finite-dimensional algebra over a field, so $\mathcal F _0 = \mathcal P _0$. Observe that $(\mathcal F_0, \subseteq)$ is an AEC.

Let $K$ be a field and $Q$ be the Dynkin quiver $D_4$, i.e., the quiver with four vertices $e, e_0, e_1, e_2$ and three arrows $\alpha : e_0 \to e$, $\alpha_1 : e_1 \to e_0$, and $\alpha_2 : e_2 \to e_0$. Let $R = KQ$ be the path algebra of $Q$. Recall that the Jacobson radical $J$ of $R$ is the ideal generated by the arrows of $Q$. 

$R$ possesses non-isomorphic non-projective simple modules $S_1 = e_1R/e_1J$ and $S_2 = e_2R/e_2J$ where $e_iJ = e_i \alpha_i R \overset{\varphi_i}\cong e_0R$ and $\varphi_i(e_i \alpha_i) = e_0$ ($i = 1, 2$). Using the pushout of the embedding $e_iJ \hookrightarrow e_iR$ and $\varphi_i$, we obtain a projective module $P_i$ and an isomorphism $\Phi_i : e_iR \to P_i$ extending $\varphi_i$ for $i = 1,2$. So $P_i = p_iR$ where $p_i = \Phi_i(e_i)$, and $p_i \alpha _i = \Phi(e_i \alpha_i) = \varphi_i(e_i \alpha_i) = e_0$ ($i = 1,2$). Let $f_i$ denote the inclusion of $e_0R$ into $P_i$. Then the cokernel of $f_i$ is isomorphic to $P_i/e_0R \cong e_iR/e_iJ =  S_i$ ($i = 1, 2$). 

Let $(P, g_1,g_2)$ be the pushout of $(f_1, f_2)$. So $P = (P_1 \oplus P_2)/I$ where $I = (e_0,-e_0)R$, the monomorphisms $g_1$, $g_2$ are defined by $g_1(p_1) = (p_1,0) + I$, $g_2(p_2) = (0,p_2) + I$, and their cokernels are isomorphic to $S_2$ and $S_1$, respectively. The annihilator of the element $0 \neq x = (p_1,p_2) + I \in P$ is $eR \oplus e_0R \oplus (\alpha_1 - \alpha_2)R$. So $xR \cong (e_1R \oplus e_2R)/(\alpha_1 - \alpha_2)R$. As $(\alpha_1 - \alpha_2)R \subseteq e_1J \oplus e_2J$, and the latter module is the Jacobson radical of $e_1R \oplus e_2R$, $(\alpha_1 - \alpha_2)R$ is a non-zero small submodule, hence not a direct summand, in $e_1R \oplus e_2R$. This implies that $xR$ is not projective. Since $R$ is right hereditary, neither $P$ is projective.  Therefore, $(\mathcal F_0, \subseteq)$ is not closed under pushouts.

Assume for the sake of contradiction that there exists an amalgam $(F, h_1, h_2)$ with $F \in \mathcal F _0$ of $(f_1, f_2)$. Then $F$ factors through $P$, i.e., there is a homomorphism $t : P \to F$ such that both $t g_1$ and $t g_1$ are monic. Since $R$ is right hereditary, the image of $t$ is projective, so without loss of generality $t$ is a split epimorphism. However, $t$ is not monic, as $P$ is not projective. As $\operatorname{Ker} t \cap \operatorname{Im}{g_i} = 0$ and $\operatorname{Im}{g_i}$ is a maximal submodule of $P$, we have $\operatorname{Ker} t \oplus \operatorname{Im}{g_i} = P$, whence $\operatorname{Ker} t \cong S_i$ for $i = 1, 2$, in contradiction with $S_1 \ncong S_2$. Therefore, $(\mathcal F_0, \subseteq)$ does not have the amalgamation property.
\end{example}

In the setting of Example \ref{illus3}, $\mathcal F_0 = \mathcal P _0$, so $(\mathcal P _0, \leq_p)$ is closed under pushouts, but $(\mathcal P _0, \subseteq)$ does not have the amalgamation property. A similar construction, but over a different hereditary ring $R$, presented in the following example, shows that it may even happen that $(\mathcal P _0,\leq_\M)$ does not have the amalgamation property in any infinite cardinal $\lambda$ and for any choice of $\M$, even though $(\mathcal P _0,\leq_\M)$ is categorical in all uncountable cardinals.

\begin{example}\label{illus4} \rm Consider a simple countable von Neumann regular ring $R$ which is not artinian and such that all projective modules are free. Such rings can be constructed, for example, using the downward L\"{o}wenheim-Skolem Theorem as subrings of the rings $S/I$, where $S$ is the endomorphism ring of a linear space of dimension $\kappa \geq \aleph_0$, and $I$ is the maximal two-sided ideal of $S$ consisting of all endomorphisms of rank $< \kappa$. Since $R$ is von Neumann regular, all modules are flat, hence all embeddings are RD and pure. As $R$ is countable, each right ideal is countably generated, whence $R$ is right hereditary \cite[Lemma 3.2]{T}. Since $\mathcal P _0 = Free$, $R$ is categorical in all uncountable cardinals. 

Note that $R$ has $2^{\aleph_0}$ non-isomorphic simple modules by \cite[Proposition 6.3]{T}, and they are all non-projective, because $R$ is a simple non-artinian ring. Let $S_1$ and $S_2$ be two non-isomorphic (non-projective) simple modules. Then $R$ has maximal right ideals $I_1$ and $I_2$ such that $S_1 \cong R/I_1$ and $S_2 \cong R/I_2$. Since both $S_1$ and $S_2$ are non-projective, the maximal right ideals $I_1$ and $I_2$ are countably infinitely generated and free, so $I_1 \cong I_2 \cong R^{(\omega)}$. 

Let $f_1: R^{(\omega)} \hookrightarrow R$ and $f_2: R^{(\omega)} \hookrightarrow R$ be the embeddings whose cokernels are $S_1$ and $S_2$, respectively. Let $( P, g_1,g_2)$ denote the pushout of $(f_1, f_2)$. So $g_1: R \to P$ and $g_2: R \to P$ are monomorphisms such that $\operatorname{Coker}{g_1} \cong S_2$ and $\operatorname{Coker}{g_2} \cong S_1$. $P$  is not a projective module, because $P \cong (R \oplus R)/I$ where $I = \{ (f_1(a),-f_2(a)) \mid a \in R^{(\omega)} \} \cong R^{(\omega)}$, so $I$ is infinitely generated, hence not a direct summand in $R \oplus R$. Therefore, $(\mathcal P_0, \leq_\M)$ is not closed under pushouts for any choice of $\M$.

Let $\lambda$ be an infinite cardinal, and $1_\lambda : R^{(\lambda)} \to R^{(\lambda)}$ be the identity homomorphism. Let $f_{i,\lambda} = f_i \oplus 1_\lambda$ ($i = 1, 2$). Then the pushout of $(f_{1,\lambda}, f_{2,\lambda})$ is $(P_\lambda, g_{1,\lambda}, g_{2,\lambda})$ where $P_\lambda = P \oplus R^{(\lambda)}$ and $g_{i,\lambda} = g_i \oplus 1_\lambda$ ($i = 1, 2$). Moreover, $\operatorname{Coker}{g_{1,\lambda}} \cong S_2$ and $\operatorname{Coker}{g_{2,\lambda}} \cong S_2$.      

The rest of the proof is similar to the one given in Example \ref{illus3}: Assume for the sake of contradiction that there exists an amalgam $(F_\lambda, h_{1,\lambda}, h_{2,\lambda})$ of $(f_{1,\lambda}, f_{2,\lambda})$ with $F_\lambda \in \mathcal P _0$. Then $F_\lambda$ factors through the pushout $P_\lambda$, i.e., there is a homomorphism $t_\lambda : P_\lambda \to F_\lambda$ such that both $t g_{1,\lambda}$ and $t g_{2,\lambda}$ are monic. Since $R$ is right hereditary, the image of $t_\lambda$ is a free module, so without loss of generality $t_\lambda$ is a split epimorphism. However, $t_\lambda$ is not monic, because $P_\lambda$ is not projective. As $\operatorname{Ker} t_\lambda \cap \operatorname{Im}{g_{i,\lambda}} = 0$ and $\operatorname{Im}{g_{i,\lambda}}$ is a maximal submodule of $P_\lambda$, we have $\operatorname{Ker} t_\lambda \oplus \operatorname{Im}{g_{i_\lambda}} = P_\lambda$, whence $\operatorname{Ker} t_\lambda \cong S_i$ for each $i = 1, 2$, in contradiction with $S_1 \ncong S_2$. Therefore, $(\mathcal P_0, \leq_\M)$ does not have the amalgamation property in $\lambda$. 
\end{example}

\begin{remark}\label{moreprop} \rm
In Example \ref{illus4}, $R$ is not a right perfect ring, so the class $\mathcal P_0$ is not closed under direct limits. Hence $\mathbf{D} = (\mathcal P_0, \leq_\M)$ \emph{is not an AEC} for any choice of $\M$ by Corollary \ref{corAEC}. In fact, $\mathcal P_0$ is not even closed under $< \lambda$-direct limits for any $\lambda = \aleph_n$ and any $n \geq 0$, and assuming V = L, under $< \lambda$-direct limits for any uncountable regular non-weakly compact cardinal $\lambda$. This follows from the fact (proved in \cite[Theorem VII.1.4]{EM}) that in all these cases there exist non-free $\lambda$-free modules. However, by Corollary \ref{wAEC}, $\mathbf{D}$ is not that far from being an AEC in the sense that only the first of the Tarski-Vaught Axioms, closure of under unions of $\M$-chains, fails.

In Example \ref{illus4}, for any choice of $\M$, $\LS(\mathbf{D}) = \aleph_0$, $\mathbf{D}$ does not have the amalgamation property in $\lambda$ for any infinite cardinal $\lambda$, but $\mathbf{D}$ is categorical in all uncountable cardinals. This contrasts with the following classical result proved \emph{for all AECs} by Shelah in \cite{sh88} (see also \cite[Theorem 17.11]{baldwinbook09}): If $\mathbf{A}$ is an AEC, $\lambda$ is any cardinal such that $\LS(\A) \leq \lambda$, $2^\lambda < 2^{\lambda^+}$, $\mathbf{A}$ is categorical in $\lambda$, and amalgamation fails in $\lambda$, then $\mathbf{A}$ is not categorical in $\lambda^+$; in fact, $\mathbf{A}$ has $2^{\lambda^+}$ non-isomorphic models of cardinality $\lambda^+$.
\end{remark}

\section{Main results}

Throughout this section we fix $(\mathcal{D}, \M)$  such that $(\mathcal{D}, \M)$ satisfies Hypothesis \ref{hyp}. The main result of this section (and the paper) is that the abstract class $\mathbf{D} = ( \md, \leq_\M)$ is stable (Theorem \ref{st}).

 \subsection{Stability} Independence relations on categories were introduced in \cite{lrv1}, these extend Shelah's notion of non-forking which in turn extends linear independence. In this section we study a specific independence relation on $\mathbf{D}$ while in parallel quickly introducing the notions from  \cite{lrv1} in the context of ACs of modules. 

\begin{defin}[{ \cite[3.4]{lrv1}}]\label{ind-rel} \rm 
An \emph{independence relation} on an AC of modules  $\A$ is a collection $\dnf$ of commutative squares of $\A$-embeddings such that for any commutative diagram (with all maps $\A$-embeddings):

\[\begin{tikzcd}
                                         &                                          & A_3 \\
A_1 \arrow[r, "h_1"'] \arrow[rru, "g_1"] & D \arrow[ru, "t" description]            &     \\
A_0 \arrow[u, "f_1"] \arrow[r, "f_2"']   & A_2 \arrow[u, "h_2"] \arrow[ruu, "g_2"'] &    
\end{tikzcd}.\]

we have that $(f_1, f_2, g_1, g_2) \in \dnf$ if and only if $(f_1, f_2, h_1, h_2) \in \dnf$. We call the elements of $\dnf$ \emph{independent squares}.
\end{defin}

We study the following relation on $\D$.

\begin{defin}\label{r-free}  \rm Let $(f_1,f_2)$ be a span such that $f_1: A_0 \to A_1$ and $f_2: A_0 \to A_2$ are $\M$-embeddings, and $A_0, A_1, A_2 \in \md$. Assume  $(A_3,g_1,g_2)$ is an amalgam of $(f_1, f_2)$  such that $g_1: A_1 \to A_3$ and $g_2: A_2 \to A_3$ are $\M$-embeddings and $A_3 \in \md$. Let $(P,h_1,h_2)$ denote the pushout of $(f_1,f_2)$ in $\rmod R$. Then we have the following commutative diagram 

% https://tikzcd.yichuanshen.de/#N4Igdg9gJgpgziAXAbVABwnAlgFyxMJZABgBoBGAXVJADcBDAGwFcYkQBZAfXJAF9S6TLnyEUZAEzU6TVu27F+gkBmx4CRcqSk0GLNok5cJSoWtGaK0vXMMAFUyuHqxyCaWLXZBowGZ+0jBQAObwRKAAZgBOEAC2SFogOBBIZDL67BE8jtFxCTTJSO4gjPQARjCMds4WhlFYwQAWOCC63pnGOTHxiMWFiL5tGYaNnQKR3akFKQM0pRVVNRp1Dc2t6bYgo7zjILk9g0kzACxDmy1z5ZXV5ssgWGDYsOuNMPRQSGDMjIxzDz5wCCMLAfAr0LCMdiQMBsXb7KZHJCnDY+YLZOGTXrTJFnVGdS4LG4iO71JotPiUPhAA
\[\begin{tikzcd}
                                         &                                          & A_3 \\
A_1 \arrow[r, "h_1"'] \arrow[rru, "g_1"] & P \arrow[ru, "t" description]            &     \\
A_0 \arrow[u, "f_1"] \arrow[r, "f_2"']   & A_2 \arrow[u, "h_2"] \arrow[ruu, "g_2"'] &    
\end{tikzcd}.\]

We say that $(f_1, f_2, g_1, g_2) \in \dnf $ in case the (unique) map $t: P \to A_3$ is a $\M$-embedding.  We will often write $\dnf(f_1, f_2, g_1, g_2)$ instead of $(f_1, f_2, g_1, g_2) \in \dnf$.
\end{defin}

\begin{remark} \rm
Definition \ref{r-free} is close to \cite[2.2]{lrvcell}, but the key difference is that we do not assume that $P$ is in $\mathcal{D}$. Of course, we always have $P \in \mathcal{D}$ in the case when $\mathcal{D}$ is closed under $\M$-submodules.

%Furthermore, $\dnf$ can be seen as the inverse image of the inclusion functor from the category $\mathcal{D}$ with $\mathcal{M}$-embeddings into the ca 
\end{remark}

Observe that in Definition \ref{r-free} $f_1, f_2, g_1, g_2$ are $\D$-embeddings while $t$ is only assumed to be an $\M$-embedding  and $t$ is a $\D$-embedding if and only if  $P \in \mathcal{D}$. Due to this, contrary to the usual convention on papers on AECs, functions in this section are not necessarily $\mathbf{D}$-embeddings and instead we make sure to explicitly mention the nature of each function we introduce.

Notice that by Lemma \ref{test}, $\dnf (f_1, f_2, g_1, g_2)$ always implies that $(A_3,g_1,g_2)$ is a disjoint amalgam of the span $(f_1,f_2)$, and if $\M$ is the class of all embeddings, then $\dnf (f_1, f_2, g_1, g_2)$ is even equivalent to $(A_3,g_1,g_2)$ being a disjoint amalgam of $(f_1,f_2)$. 

However, when $\M$ is the class of all pure or RD-embeddings, $\dnf (f_1, f_2, g_1, g_2)$ is a stronger assumption than $(A_3,g_1,g_2)$ being a disjoint amalgam of the span $(f_1,f_2)$ in general: 

\begin{example}\label{split} \rm Let $R$ be an integral domain with the quotient field $Q$. Assume $R$ is not local, so there exists $r \in R$ such that neither $r$ nor $1 - r$ are invertible in $R$. Let $A_0 = 0$, $A_1 = A_2 = R$, and $f_1 = f_2 = 0$. Let $(P,h_1,h_2)$ denote the pushout of $(f_1,f_2)$ in $\rmod R$. Then $P = R \oplus R$ and $h_i$ embeds $R$ into the $i$th component of $P$ ($i = 1, 2$). Let $A_3 = Rr^{-1} \oplus Rr^{-1} \subseteq Q \oplus Q$, and $g_1, g_2 : R \to A_3$ be defined by $g_1(1) = (r^{-1},1)$ and $g_2(1) = (1,r^{-1})$. 

Since $r^2 \neq 1$, $(A_3,g_1,g_2)$ is a disjoint amalgam of the span $(f_1,f_2)$. Moreover, $t : P \to A_3$ satisfies $t(1,0) = (r^{-1},1)$ and $t(0,1) = (1,r^{-1})$, and $t$ is monic by Lemma \ref{test}. However, as $1 - r$ is not invertible in $R$, $(r^{-1},-r^{-1})$ is not an element of the image $I$ of $t$, whence $t$ is not surjective. So $I$ is a free module of rank $2$ properly embedded in the free module $A_3$ of the same rank. Hence $I$ is an essential submodule of $A_3$, and $t$ is not an RD-embedding. 
\end{example}

Notice that in Example \ref{split}, all the maps $f_i,h_i,g_i$ ($i = 1, 2$) are split embeddings, while the embedding $t$ is not even an RD-embedding (and hence not a pure embedding).     

\medskip
Some basic properties of $\dnf$ can be shown as in \cite[2.7]{lrvcell}. 

\begin{lemma}\label{basic-nf}\
\begin{enumerate}
\item $\dnf$ is an independence relation.
\item (\cite[3.13]{lrv1}) $\dnf$ is transitive, i.e., if \[ \begin{tikzcd}
A_1 \arrow[rr, "g_1"']                   &      & A_3 \arrow[rr, "h_3"']                    &      & A_5                    \\
                                         & \dnf &                                           & \dnf &                        \\
A_0 \arrow[rr, "f_2"'] \arrow[uu, "f_1"] &      & A_2 \arrow[uu, "g_2"'] \arrow[rr, "h_2"'] &      & A_4 \arrow[uu, "h_4"'] 
\end{tikzcd}\]

then $\dnf(f_1, h_2 \circ f_2, g_1 \circ h_3, h_4)$.
\item (\cite[3.9]{lrv1}) $\dnf$ has symmetry, i.e., $\dnf(f_1, f_2, g_1, g_2)$ if and only if $\dnf(f_2, f_1, g_2, g_1)$.
\end{enumerate}
\end{lemma}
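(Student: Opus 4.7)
The three assertions of the lemma rest on a single construction: for $\dnf(f_1,f_2,g_1,g_2)$ the unique map $t \colon P \to A_3$ from the pushout $P$ of $(f_1,f_2)$ in $\rmod R$ is required to be an $\M$-embedding. My plan is to dispatch (3) and (1) as formal consequences of this setup and spend the effort on (2).

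Symmetry (3) is essentially immediate: the pushouts $(P,h_1,h_2)$ of $(f_1,f_2)$ and $(P,h_2,h_1)$ of $(f_2,f_1)$ are the same object, and the factoring map $t$ does not depend on the ordering. For the independence relation axiom (1), given the diagram of Definition~\ref{ind-rel} with intermediate $D$ and $\D$-embedding $t\colon D \to A_3$, the pushout property of $P$ yields unique maps $t_h \colon P \to D$ and $t_g \colon P \to A_3$, and uniqueness forces $t_g = t \circ t_h$. Since $t$ is an $\M$-embedding and $\M$ is closed under composition, $t_h \in \M$ implies $t_g \in \M$. For the converse I would use the coherence property of $\M$: if $X \subseteq Y \subseteq Z$ with $X \leq_\M Z$ and $Y \leq_\M Z$, then $X \leq_\M Y$. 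This holds for each of the three choices of $\M$ (it is axiom (A2) for $(\rmod R, \leq_\M)$), and applied to $t(t_h(P)) \subseteq t(D) \subseteq A_3$ gives $t_h \in \M$.

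The substance is in (2). Form the pushouts $(P_1, p_1, p_2)$ of $(f_1,f_2)$ and $(P_2, q_2, q_4)$ of $(g_2, h_2)$ in $\rmod R$, with comparison maps $t_1 \colon P_1 \to A_3$ and $t_2 \colon P_2 \to A_5$ both in $\M$ by hypothesis. By the pasting lemma for pushouts, the pushout of the composite span $(f_1, h_2 \circ f_2)$ is canonically identified with the pushout $(P_3, s_1, s_4)$ of $(p_2, h_2)$, and the comparison map $t \colon P_3 \to A_5$ factors as $t_2 \circ u$, where $u \colon P_3 \to P_2$ is the map induced by $(t_1, \id_{A_4})$ via the universal property of $P_3$. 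Since $\M$ is closed under composition, it suffices to show $u \in \M$.

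For this I would exploit that $s_1, q_2 \in \M$, because pushouts along $h_2 \in \M$ preserve $\M$-embeddings (recorded in Subsection~\ref{amalg}), and that $P_3/s_1(P_1)$ and $P_2/q_2(A_3)$ are both canonically isomorphic to $A_4/h_2(A_2)$. This yields a morphism of short exact sequences $0 \to P_1 \xrightarrow{s_1} P_3 \to A_4/h_2(A_2) \to 0$ and $0 \to A_3 \xrightarrow{q_2} P_2 \to A_4/h_2(A_2) \to 0$ with vertical maps $t_1$, $u$, and $\id$. When $\M$ is the class of all embeddings, the snake lemma gives $\ker u \cong \ker t_1 = 0$. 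When $\M$ is pure (respectively RD) embeddings, tensoring the rows with an arbitrary finitely (respectively cyclically) presented left $R$-module $F$ preserves their exactness, and the five lemma applied to the resulting diagram forces $u \otimes_R F$ injective, whence $u$ is pure (resp.\ RD). This last diagram chase is the main obstacle; once $u \in \M$ is established, (2) follows.
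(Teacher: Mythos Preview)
Your argument is correct. The paper itself does not give a proof but simply refers to \cite[2.7]{lrvcell}; the underlying mechanism there is the same pasting-of-pushouts idea you use for (2), carried out in a more abstract categorical setting. Your version is a concrete module-theoretic instantiation: the morphism of short exact sequences with vertical maps $(t_1,u,\id)$ and the snake-lemma/tensor argument are exactly the right way to verify $u\in\M$ case by case. The only thing worth making explicit in your write-up is that $u$ is already monic in the pure/RD cases (take $F=R$, or just run the embedding case first), so that the characterisation of purity via injectivity of $u\otimes F$ applies cleanly; otherwise the argument is complete.
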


Recall that an independence relation $\dnf$ \emph{has existence} \cite[3.10]{lrv1} if every span can be completed to an independent square. As $\mathbf{D}$ might not even have amalgamation (see Examples \ref{illus3} and \ref{illus4}), it is possible that no independence relation on $\mathbf{D}$ has existence. Due to this we introduce a strengthening of $\M$ and show a \emph{weakening} of existence for $\dnf$.

\begin{defin} \rm 
Let $A, B \in \md$. 
 $f: A \to B$ is a \emph{$\mq$-embedding} if $f$ is a $\M$-embedding and $B/f(A) \in \md$. If $f$ is the inclusion from $A$ to $B$, we say that $A$ is a \emph{$\mq$-submodule} of $B$ and denote it by $A \leq_{\mq} B$. 
\end{defin}

\medskip

Our next lemma follows immediately from the commutative diagram presented at the beginning of Section \ref{amalg}. 

\begin{lemma}\label{push} Assume $A_0, A_1, A_2 \in \mathcal{D}$.
If $f_1: A_0 \to A_1$ is a $\mq$-embedding and  $f_2: A_0 \to A_2$ is a $\M$-embedding, then the pushout $( P , g_1,  g_2)$ of $(f_1, f_2)$  (in $\rmod R$) satisfies that $P \in \md$ and $g_2$ is a $\mq$-embedding.  Similarly, if $f_2$ is a $\mq$-embedding, then $g_1$ is a $\mq$-embedding. 
\end{lemma}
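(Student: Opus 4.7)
The plan is to read off both conclusions directly from the commutative pushout diagram displayed at the start of Section \ref{amalg}. That diagram exhibits a short exact sequence $0 \to A_2 \xrightarrow{g_2} P \to B_1 \to 0$, where $B_1 = A_1/f_1(A_0)$. Since $f_1$ is assumed to be a $\mq$-embedding, $B_1 \in \md$; combined with $A_2 \in \md$ and the closure of $\md$ under extensions from Fact \ref{s-ext}, this immediately forces $P \in \md$.

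The second step is to check that $g_2$ is an $\M$-embedding. For each of the three choices of $\M$ allowed by Hypothesis \ref{hyp}.(2) this is essentially immediate: when $\M$ is the class of all embeddings, $g_2$ is monic by the standard fact that pushouts of monomorphisms along monomorphisms in $\rmod R$ remain monic; when $\M$ is the class of pure embeddings or $RD$-embeddings, the preservation of these properties under pushouts is the statement recorded in the paragraph just before Example \ref{ex1}. Combined with $P/g_2(A_2) \cong B_1 \in \md$ from the first step, this yields that $g_2$ is a $\mq$-embedding. The symmetric assertion, that $g_1$ is a $\mq$-embedding when $f_2$ is, follows by swapping the two legs of the span and repeating the argument with the analogous short exact sequence $0 \to A_1 \xrightarrow{g_1} P \to B_2 \to 0$ from the right column of the same diagram.

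I do not anticipate any real obstacle: the content is a diagram chase coupled with the extension closure of $\md$. The only point worth flagging is that the pushout $(P,g_1,g_2)$ is computed in $\rmod R$, not a priori in $\mathbf{D}$, so the role of the $\mq$-hypothesis on $f_1$ is precisely to guarantee that this module-theoretic pushout nevertheless lands back inside $\md$ and that the opposite leg $g_2$ has cokernel in $\md$, as required by the definition of a $\mq$-embedding.
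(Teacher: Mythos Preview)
Your proposal is correct and follows the same route as the paper: both arguments read the short exact sequence $0 \to A_2 \xrightarrow{g_2} P \to A_1/f_1(A_0) \to 0$ off the pushout diagram, invoke closure of $\md$ under extensions (Fact \ref{s-ext}) to obtain $P \in \md$, and note that this same sequence witnesses $P/g_2(A_2) \in \md$. You add a little more detail than the paper on why $g_2$ is an $\M$-embedding, but otherwise the arguments are identical.
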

\begin{proof} Observe we have the following exact sequence

\[ 0 \xrightarrow[]{} A_2 \xrightarrow[]{g_2}   (A_1 \oplus A_2)/I \xrightarrow[]{h}  A_1/f_1(A_0) \xrightarrow[]{}  0\]

where $I = \{ (f_1(a),-f_2(a)) \mid a \in A_0 \}$ and $h((n_1, n_2) + I)= n_1 + f_1(A_0)$. 

Since $A_2 \in \md$ and  $A_1/f_1(A_0) \in \md$ as $f_1: A_0 \to A_1$ is a $\mq$-embedding, it follows that $ (A_1 \oplus A_2)/I  \in \md$ by closure under extensions (Fact \ref{s-ext}). Moreover, the short exact sequence also shows that $g_2: A_2 \to (A_1 \oplus A_2)/I$ is  a $\mq$-embedding. The final statement can be shown using an analogous argument.  \end{proof}

\begin{cor}(Weak existence) Assume $A_0, A_1, A_2 \in \mathcal{D}$.
If $f_1: A_0 \to A_1$ is a $\mq$-embedding and  $f_2: A_0 \to A_2$ is a $\M$-embedding, then there are $g_1: A_1 \to A_3$ a $\M$-embedding and $g_2: A_2 \to A_3$ a $\mq$-embedding such that $\dnf(f_1, f_2, g_1, g_2)$. Moreover, if $f_2$ is a  a $\mq$-embedding, then $g_1: A_1 \to A_3$ can be chosen to be a $\mq$-embedding. 
\end{cor}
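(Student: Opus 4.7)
The plan is to take $(A_3,g_1,g_2)$ to simply be the pushout $(P,h_1,h_2)$ of $(f_1,f_2)$ formed in $\rmod R$. With that choice, the unique comparison map $t:P \to A_3$ appearing in Definition \ref{r-free} is just $\id_P$, which is automatically an $\M$-embedding, so $\dnf(f_1,f_2,g_1,g_2)$ is immediate from the definition. Everything therefore reduces to verifying that this pushout lies in $\md$ and that its structural maps have the claimed refinements.

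First I would form the pushout $(P,h_1,h_2)$ of the span $(f_1,f_2)$ in $\rmod R$. Since $f_1:A_0 \to A_1$ is a $\mq$-embedding and $f_2:A_0 \to A_2$ is an $\M$-embedding with $A_0,A_1,A_2 \in \md$, Lemma \ref{push} gives directly that $P \in \md$ and that $h_2:A_2 \to P$ is a $\mq$-embedding. Next I would check that $h_1:A_1 \to P$ is itself an $\M$-embedding: when $\M$ consists of all embeddings this is the standard fact that pushouts preserve monomorphisms in any abelian category, and when $\M$ is the class of all pure, respectively all $RD$-embeddings, this is the observation recorded at the beginning of Section \ref{amalg}, namely that whenever $f_2 \in \M$ the parallel pushout arrow $h_1$ is again in $\M$.

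With these preparations, setting $A_3:=P$, $g_1:=h_1$, $g_2:=h_2$ gives the required independent square, since $t=\id_P$ is trivially an $\M$-embedding and so $\dnf(f_1,f_2,g_1,g_2)$ holds by Definition \ref{r-free}. For the moreover clause, if additionally $f_2$ is a $\mq$-embedding, then Lemma \ref{push} applied with the roles of $f_1$ and $f_2$ swapped yields that $h_1=g_1$ is also a $\mq$-embedding. I do not expect a genuine obstacle here: the content of the corollary is essentially a repackaging of Lemma \ref{push} combined with the preservation of $\M$ under pushouts already noted in Section \ref{amalg}, and the fact that the canonical pushout itself witnesses $\dnf$ trivially makes the independence clause automatic.
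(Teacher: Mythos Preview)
Your proposal is correct and follows exactly the same approach as the paper: take $A_3$ to be the pushout $P$ itself so that $t=\id_P$ trivially witnesses $\dnf$, and invoke Lemma \ref{push} (together with the preservation of $\M$ under pushouts noted at the start of Section \ref{amalg}) to get $P\in\md$ and the required properties of $g_1,g_2$. If anything, your write-up is more explicit than the paper's terse version, which leaves the verification that $g_1$ is an $\M$-embedding and $g_2$ is a $\mq$-embedding implicit.
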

\begin{proof}  Let  $(g_1: A_1 \to P, g_2: A_2 \to P)$ where $(P, g_1, g_2)$ is the pushout of $(f_1, f_2)$. Observe that $\dnf(f_1, f_2, g_1, g_2)$, since $\id_P: P \to P$ is an $\M$-embedding and $P \in \mathcal{D}$ by Lemma \ref{push}. 
\end{proof}

\begin{remark} \rm
In the previous statement, the roles of $f_1$ and $f_2$ can be reversed by symmetry and we may assume that $g_2$ is an inclusion as $\dnf$ is an independence relation.  
\end{remark}

\begin{defin}[{\cite[3.2]{lrv1}}] \rm 
Assume $\mathbf{A}$ is an AC of modules.
We define a relation $\sim_*$ of two amalgamation diagrams of $\mathbf{A}$-embeddings $(f_1: A_0 \to A_1, f_2: A_0 \to A_2, g_1: A_1 \to A_3, g_2: A_2 \to A_3)$ and $(f_1: A_0 \to A_1, f_2: A_0 \to A_2, h_1 : A_1 \to A_3', h_2: A_2 \to A_3')$ as follows: 

$(f_1, f_2, g_1, g_2) \sim_*(f_1, f_2, h_1, h_2)$, if there are $(s_1: A_3' \to T, s_2: A_3 \to T)$ $\mathbf{A}$-embeddings such that the following diagram commutes:

% https://tikzcd.yichuanshen.de/#N4Igdg9gJgpgziAXAbVABwnAlgFyxMJZABgBoBGAXVJADcBDAGwFcYkQBZAfXJAF9S6TLnyEUZAEzU6TVu27F+gkBmx4CRcqSk0GLNok5cJSoWtGbSxaXrmHuAZgDkplcPVjkEijdkGjDq6qIhoo3ta6fuwAKvzSMFAA5vBEoABmAE4QALZIWiA4EEhkMvrsaTyumTl5NIVI3iCM9ABGMIwACu4WhhlYiQAWOCCRZYYVJgLpWbmIjfWIACw0zW2d3aEgfYPDo3YgicZVM8V1RUsrre1d5pvbQyOl+4fkADqvA3Bo9ADGMMASH7ZPiPOADLBpYaIAC05CmIGqsxKCwce38A0q8MRDTOSFRT3RRyxJ0Q+IWAFY0ew4JjlNiLgVzpSCdSjpc1jcQmItv0HnxKHwgA
\[\begin{tikzcd}
                                                                 & A_3' \arrow[r, "s_1"]                    & T                     \\
A_1 \arrow[rr, "g_1\hspace{2cm}"', shift left] \arrow[ru, "h_1"] &                                          & A_3 \arrow[u, "s_2"'] \\
A_0 \arrow[u, "f_1"] \arrow[r, "f_2"']                           & A_2 \arrow[ru, "g_2"'] \arrow[uu, "h_2"] &                      
\end{tikzcd}\]

In other words, $(T,s_1,s_2)$ is an amalgam of $(h_1, g_1)$ and  $(h_2, g_2)$. We say that $(s_1, s_2)$ \emph{witness} that $(f_1, f_2, g_1, g_2) \sim_*(f_1, f_2, h_1 , h_2)$. 

Clearly, $\sim_*$ is reflexive and symmetric. We let $\sim$ denote the transitive closure of $\sim^*$.
\end{defin}

\begin{remark}\label{easy} \rm Consider the commutative diagram from Definition \ref{ind-rel}. Then $(t, \id_{A_3})$ witnesses that $(f_1, f_2, g_1, g_2) \sim_*(f_1, f_2, h_1 , h_2)$. 
\end{remark}

It is easy to show that for any AC of modules $\mathbf{A}$: $\dnf$ is an independence relation if and only if $\dnf$ is closed under $\sim$.

We show that \cite[3.12]{lrv1} still goes through with weak existence.

\begin{lemma}\label{b-in} \
\begin{enumerate}
\item If $f_1$ or $f_2$ is an isomorphism then any commutative diagram of $\mathbf{D}$-embeddings of the following form is an independent square:

% https://tikzcd.yichuanshen.de/#N4Igdg9gJgpgziAXAbVABwnAlgFyxMJZABgBoBGAXVJADcBDAGwFcYkQBZAfWJAF9S6TLnyEUZYtTpNW7brwFDseAkXKlJNBizaJOXAMz9BIDMtFqKU7bL3cATPykwoAc3hFQAMwBOEALZIZCA4EEjq0jrsADrRWFBcwPJ8xt5+gYjBoUgGNIz0AEYwjAAKwipiID5YrgAWOCBaMrogXlyOiq3p4TTZiPZNUXquXOSpXQE5vWH9eYXFZeaqetV1DYO2ICMdlHxAA
 \[ \begin{tikzcd}
A_1 \arrow[r, "g_1"]                         & A_3                   \\
A_0 \arrow[u, "f_1"] \arrow[r, "f_2"'] & A_2 \arrow[u, "g_2"']
\end{tikzcd} \]

\item (\cite[3.8]{lrv1}) $\dnf$ has invariance, i.e., $\dnf$ is invariant under isomorphisms of amalgamation diagrams.
\end{enumerate}
\end{lemma}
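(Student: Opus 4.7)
The plan is to observe that both parts reduce to the explicit description of the pushout $P = (A_1 \oplus A_2)/I$, with $I = \{(f_1(a), -f_2(a)) \mid a \in A_0\}$, recalled at the start of Section~\ref{amalg}, together with the elementary fact that each of the three candidates for $\M$ (all embeddings, all pure embeddings, all $RD$-embeddings) is closed under pre- and post-composition with module isomorphisms. In particular, whenever $t = \alpha \circ s \circ \beta$ with $\alpha, \beta$ module isomorphisms and $s$ an $\M$-embedding, $t$ is itself an $\M$-embedding. I do not anticipate a substantive obstacle here; the argument is a routine exercise in universal properties, so the main task will be to present the diagram chase cleanly.

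For part (1), I would first invoke the symmetry of $\dnf$ from Lemma~\ref{basic-nf}(3) to reduce to the case where $f_1$ is an isomorphism. The key step is then to verify by a direct computation that the pushout map $h_2 : A_2 \to P$ is itself an isomorphism of modules: every coset $(b_1, b_2) + I$ equals $(0,\, b_2 + f_2(f_1^{-1}(b_1))) + I$, because $(b_1, -f_2(f_1^{-1}(b_1))) = (f_1(a), -f_2(a))$ lies in $I$ for $a = f_1^{-1}(b_1)$, giving surjectivity; injectivity is immediate from the bijectivity of $f_1$. The unique factoring map $t : P \to A_3$ from Definition~\ref{r-free} is then forced to equal $g_2 \circ h_2^{-1}$, a composition of the $\D$-embedding $g_2$ with an isomorphism, hence an $\M$-embedding by the opening observation. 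Therefore $\dnf(f_1, f_2, g_1, g_2)$ holds.

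For part (2), suppose $\dnf(f_1, f_2, g_1, g_2)$ holds, witnessed by the unique $t : P \to A_3$, and that isomorphisms $\phi_i : A_i \to A_i'$ for $i = 0, 1, 2, 3$ exhibit an isomorphism of amalgamation diagrams onto $(f_1', f_2', g_1', g_2')$. The plan is to apply the universal property of pushouts twice: first to the span $(h_1' \circ \phi_1,\, h_2' \circ \phi_2)$, where $(P', h_1', h_2')$ is the pushout of $(f_1', f_2')$, to produce a unique $\phi : P \to P'$ satisfying $\phi \circ h_i = h_i' \circ \phi_i$ for $i = 1, 2$; then to the analogous span built from the $\phi_i^{-1}$ to obtain a two-sided inverse, so that $\phi$ is an isomorphism of modules. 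A short diagram chase using the uniqueness clause of the universal property of $P$ then yields $t' \circ \phi = \phi_3 \circ t$, so $t' = \phi_3 \circ t \circ \phi^{-1}$. By the closure property stated in the first paragraph, $t'$ is an $\M$-embedding, establishing $\dnf(f_1', f_2', g_1', g_2')$ as required.
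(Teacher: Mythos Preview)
Your proof is correct but follows a genuinely different route from the paper's. For part (1), the paper works abstractly: since $f_1$ is an isomorphism it is an $\mq$-embedding, so weak existence applied to $(f_1,f_2)$ produces an independent square $\dnf(f_1,f_2,h_1,h_2)$ with $h_2$ an $\mq$-embedding; a second application of weak existence to $(g_2,h_2)$ yields witnesses that $(f_1,f_2,g_1,g_2)\sim_*(f_1,f_2,h_1,h_2)$, and closure of $\dnf$ under $\sim$ finishes. For part (2) the paper simply observes that the argument of \cite[3.18]{lrv1} goes through once (1) and transitivity are available. By contrast, you bypass the weak-existence/$\sim_*$ machinery entirely and argue directly from the explicit pushout: when $f_1$ is an isomorphism, $h_2$ is a module isomorphism, so $t=g_2\circ h_2^{-1}$ is visibly an $\M$-embedding; and for invariance you transport $t$ along the induced pushout isomorphism $\phi$. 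Your approach is shorter and more self-contained, and it makes transparent that neither part needs any closure property of $\md$ beyond those already built into Definition~\ref{r-free}. The paper's approach, on the other hand, illustrates that the abstract template from \cite{lrv1} still functions in this weakened setting, which is thematically useful given that the surrounding results (weak uniqueness, monotonicity) are proved in that style.
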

\begin{proof}\
\begin{enumerate}
\item We may assume without loss of generality that $f_1$ is an isomorphism. Then $f_1$ is a $\mq$-embedding, as $0 \in \md$, so applying weak existence to $(f_1, f_2)$ there are $(h_1, h_2)$ such that $\dnf(f_1, f_2, h_1, h_2)$ with $h_2$ a  $\mq$-embedding. Applying again weak existence to $(g_2, h_2)$ there are $(k_1, k_2)$ such that $\dnf(g_2, h_2, k_1, k_2)$. One can show that $(k_1, k_2)$ witness that $(f_1, f_2, g_1, g_2) \sim_* (f_1, f_2, h_1, h_2)$ using that $f_1$ is an isomorphism. As  $\dnf(f_1, f_2, h_1, h_2)$, hence $\dnf(f_1, f_2, g_1, g_2)$.
\item  The argument of \cite[3.18]{lrv1} goes through by (1) and the fact that $\dnf$ is transitive by Lemma \ref{basic-nf}.
\end{enumerate}
 \end{proof}

Another important property of an independence relation is uniqueness \cite[3.13]{lrv1}. Uniqueness is clear in the particular case when the class $\mathcal D$ is closed under $\mathcal M$-submodules. In general, similarly to existence, we employ a weakening of uniqueness, though we are not aware of any explicit example where uniqueness fails for $\dnf$.

\begin{lemma}\label{wu}(Weak uniqueness)
If $\dnf(f_1, f_2, g_1, g_2)$, $\dnf(f_1, f_2, h_1, h_2)$ and $f_1$ is a $\mq$-embedding, then $(f_1, f_2, g_1, g_2) \sim (f_1, f_2, h_1, h_2)$. More precisely there are $(k_1, k_2)$ $\mathbf{D}$-embeddings such that $(f_1, f_2, g_1, g_2) \sim_* (f_1, f_2, k_1, k_2)  \sim_* (f_1, f_2, h_1, h_2)$.
\end{lemma}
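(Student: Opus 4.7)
The plan is to use the pushout $(P, p_1, p_2)$ of $(f_1, f_2)$ in $\rmod R$ as the common intermediate amalgam. Let $A_3$ denote the codomain of $g_1, g_2$ and $A_3'$ the codomain of $h_1, h_2$. First I would observe that because $f_1$ is a $\mq$-embedding and $f_2$ is an $\M$-embedding, Lemma \ref{push} gives $P \in \md$ and that $p_2$ is a $\mq$-embedding; moreover, the general fact recorded in Section \ref{amalg}, that $\M$-embeddings are preserved by pushout along their opposite leg, makes $p_1$ an $\M$-embedding as well. Since $A_1, A_2, P \in \md$, both $p_1$ and $p_2$ are $\mathbf{D}$-embeddings, so $(f_1, f_2, p_1, p_2)$ is a legitimate amalgamation square.

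Next I would extract the unique factoring maps out of the pushout coming from the two hypotheses. By $\dnf(f_1, f_2, g_1, g_2)$ there is an $\M$-embedding $t \colon P \to A_3$ with $t p_1 = g_1$ and $t p_2 = g_2$, which is automatically a $\mathbf{D}$-embedding because $P, A_3 \in \md$. By Remark \ref{easy} (or directly: the pair $(t, \id_{A_3})$ fills in the required commutative triangle), this yields
\[ (f_1, f_2, g_1, g_2) \sim_* (f_1, f_2, p_1, p_2). \]
Symmetrically, $\dnf(f_1, f_2, h_1, h_2)$ produces a $\mathbf{D}$-embedding $s \colon P \to A_3'$ with $s p_1 = h_1$ and $s p_2 = h_2$, and $(s, \id_{A_3'})$ witnesses
\[ (f_1, f_2, p_1, p_2) \sim_* (f_1, f_2, h_1, h_2). \]

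Setting $k_1 = p_1$ and $k_2 = p_2$ concludes the argument. The only nontrivial ingredient is that the pushout object $P$ actually lies in $\md$, and that is precisely where the hypothesis that $f_1$ be a $\mq$-embedding is used, via Lemma \ref{push}; without this hypothesis we would have no candidate for the intermediate amalgam inside $\md$. Once $P \in \md$ is secured, the rest is a bookkeeping exercise with the universal property of the pushout and the definition of $\sim_*$.
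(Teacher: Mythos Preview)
Your proof is correct and follows essentially the same argument as the paper: both take the pushout $(P,k_1,k_2)$ of $(f_1,f_2)$, use Lemma \ref{push} together with the $\mq$-hypothesis on $f_1$ to place $P$ in $\md$, and then apply Remark \ref{easy} twice with the two factoring $\M$-embeddings $P\to A_3$ and $P\to A_3'$ to link both squares to the pushout square via $\sim_*$.
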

\begin{proof}
Assume $\dnf(f_1, f_2, g_1, g_2)$, $\dnf(f_1, f_2, h_1, h_2)$ and $f_1$ is a $\mq$-embedding. Let $(P, k_1, k_2)$ be the pushout of $(f_1, f_2)$. Then $P \in \md$ by Lemma \ref{push} and there are $d_1: P \to A_3$ and $d_2: P \to A_3'$  $\M$-embedding such that the following diagram commutes:

% https://tikzcd.yichuanshen.de/#N4Igdg9gJgpgziAXAbVABwnAlgFyxMJZABgBoBGAXVJADcBDAGwFcYkQBZAfXJAF9S6TLnyEUZAEzU6TVu27F+gkBmx4CRcqSk0GLNok5cJSoWtGaK0vXMMAFUyuHqxyCaWLXZBowGZHqiIaKL4eXvryXL4A5PzSMFAA5vBEoABmAE4QALZIWiA4EEhkMhGGaTyOmTl5NIVI7iCM9ABGMIx2zhaGGViJABY4ILre7BUmAulZuYiN9YihpbYgANbGVdPFdUULNM1tHV3BIL0DQyNlq5WTINUzi-MALHut7Z3mx1hg2LDDIP0wehQJBgZiMRh7L4+OAQRhYYF1ehYRjsSBgNgXZZQa7KO5bAo7Z5LHyJHFTGqzbZIIn7N5HMQnPqDP42EnrG54xAleYAVhobTAwIWJTg-SwaSGiAAtI1Wex+pVIej2DC4cCOZtKQSkHyQAKhVLfCU5YYFSYXgd3kEGadmRqKQ8drraYcPgyvj8McT2NjzU0oSrYfC-qLxZLiHxKHwgA
\[\begin{tikzcd}
                                                                                      &                                                                           & A_3 & A_3' \\
A_1 \arrow[r, "k_1"'] \arrow[rru, "g_1"] \arrow[rrru, "h_1", bend left, shift left=2] & P \arrow[ru, "d_1" description] \arrow[rru, "d_2" description]            &     &      \\
A_0 \arrow[u, "f_1"] \arrow[r, "f_2"']                                                & A_2 \arrow[u, "k_2"] \arrow[ruu, "g_2"'] \arrow[rruu, "h_2"', bend right] &     &     
\end{tikzcd}\]

By Remark \ref{easy}, $( d_1, \id_{A_3})$ witness that $(f_1, f_2, g_1, g_2) \sim_* (f_1, f_2, k_1, k_2)$, and $(d_2, \id_{A_3'})$ witness that($f_1, f_2, k_1, k_2) \sim_* (f_1, f_2, h_1, h_2)$. Hence  $(f_1, f_2, g_1, g_2) \sim (f_1, f_2, h_1, h_2)$.
\end{proof}

The standard argument to obtain monotonicity \cite[3.20]{lrv1} in a stable independence relation uses existence and uniqueness which  $\dnf$ might not have. So we provide a slightly different argument. 

 \begin{lemma}(Monotonicity)
 If
 
 \[ \begin{tikzcd}
A_1 \arrow[rrrr, "{f_{1,4}}"']                       &  &                              &  & A_4                          \\
                                                     &  & \dnf                         &  &                              \\
A_0 \arrow[rr, "{f_{0,2}}"'] \arrow[uu, "{f_{0,1}}"] &  & A_2 \arrow[rr, "{f_{2,3}}"'] &  & A_3 \arrow[uu, "{f_{3,4}}"']
\end{tikzcd}\]

then  $\dnf(f_{0,1},  f_{0,2}, f_{1,4}, f_{3,4} \circ f_{2,3})$
 \end{lemma}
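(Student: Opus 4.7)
The plan is to unpack the hypothesis $\dnf(f_{0,1}, f_{2,3}\circ f_{0,2}, f_{1,4}, f_{3,4})$ (which is exactly what the inner $\dnf$ asserts, once one composes the two horizontal maps along the bottom of the diagram into the long leg $f_{2,3}\circ f_{0,2}\colon A_0 \to A_3$) as a $\M$-embedding $t\colon P \to A_4$ out of the pushout $(P, p_1, p_2)$ of $(f_{0,1},\ f_{2,3}\circ f_{0,2})$ in $\rmod R$, with $t\circ p_1 = f_{1,4}$ and $t\circ p_2 = f_{3,4}$. This is what Definition \ref{r-free} gives us, and it is the only piece of data extracted from the hypothesis.

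Next, I would form the pushout $(Q, q_1, q_2)$ of the short span $(f_{0,1}, f_{0,2})$ in $\rmod R$ and invoke the pushout pasting lemma: the pushout of $(q_2\colon A_2 \to Q,\ f_{2,3}\colon A_2 \to A_3)$, pasted on top of the pushout producing $Q$, yields the pushout of the outer rectangle, which is canonically $P$. This pasting gives a unique map $r\colon Q \to P$ with $r\circ q_1 = p_1$ and $r \circ q_2 = p_2 \circ f_{2,3}$. Using the observation recorded at the beginning of Section \ref{amalg}---namely that pushouts in $\rmod R$ preserve monomorphisms, pure embeddings and $RD$-embeddings (so in all three cases of $\M$ permitted by Hypothesis \ref{hyp})---the fact that $f_{2,3}$ is an $\M$-embedding yields that the parallel map $r\colon Q \to P$ is also an $\M$-embedding.

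Finally, I would set $s := t \circ r\colon Q \to A_4$. As a composition of $\M$-embeddings, $s$ is an $\M$-embedding, and a direct computation shows $s\circ q_1 = t\circ p_1 = f_{1,4}$ and $s\circ q_2 = t\circ p_2 \circ f_{2,3} = f_{3,4}\circ f_{2,3}$; since this $s$ is then (by uniqueness) the induced map out of the pushout of $(f_{0,1}, f_{0,2})$ into $A_4$, Definition \ref{r-free} delivers $\dnf(f_{0,1}, f_{0,2}, f_{1,4}, f_{3,4}\circ f_{2,3})$. The only nontrivial point is the pasting argument together with the verification that $r$ is an $\M$-embedding; crucially, nothing here requires $P$ or $Q$ to lie in $\md$, because the definition of $\dnf$ only asks the induced map from the pushout (taken in $\rmod R$) to be an $\M$-embedding into the target amalgam.
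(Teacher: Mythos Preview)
Your proof is correct and follows essentially the same argument as the paper's: form both pushouts, use pushout pasting to obtain the comparison map between them, observe that this comparison map is an $\M$-embedding because $f_{2,3}$ is (pushouts in $\rmod R$ preserve $\M$-embeddings), and then compose with the $\M$-embedding supplied by the hypothesis. The only difference is notational: you name the large pushout $P$ and the small one $Q$, while the paper reverses these roles, but the logical structure is identical.
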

\begin{proof}
 Let $(P, h_1, h_2)$ be the pushout of $(f_{0,1}, f_{0,2})$ and $t: P \to A_4
$ be the map given by the universal mapping property of the pushout applied to $(f_{0,1}, f_{0,2}, f_{1,4}, f_{3,4} \circ f_{2,3})$. We show that $t$ is a $\M$-embedding.

 Let $(Q, d_P, d_{A})$ be the pushout of $(h_2, f_{2,3})$. Then $(Q, d_P \circ h_1 , d_A)$ is the pushout of $(f_{0,1}, f_{2,3} \circ f_{0, 2})$ by transitivity of pushouts. Then $s: Q \to A_4$ given by the universal mapping property of the pushout applied to $(f_{0,1}, f_{2,3}  \circ f_{0,2}, f_{1,4}, f_{3,4})$ is a $\M$-embedding by assumption. One can show that the following diagram commutes:

\[\begin{tikzcd}
                                         &                                          & A_4 \\
A_1 \arrow[r, " h_1"'] \arrow[rru, "f_{1,4}"] & P \arrow[ru, "s \circ d_P" description]  &   \\
A_0 \arrow[u, "f_{0,1}"] \arrow[r, " f_{0,2}"']   & A_2 \arrow[u, "h_2"] \arrow[ruu, "f_{3,4} \circ  f_{2,3}"'] &  
\end{tikzcd}\]

Hence $t= s \circ d_P$ by the universal mapping property of the pushout  $(P, h_1, h_2)$. Observe  $d_P$ is a $\M$-embedding because $f_{2,3}$ is a $\M$-embedding. Therefore, $t$ is a $\M$-embedding.
\end{proof}

\begin{nota}
\rm We write $A_1 \dnf^{A_3}_{A_0} A_2$ if $A_0 \leq_\M A_1, A_2 \leq_\M A_3$, $A_i \in \mathcal{D}$ for $i=0,1,2,3$, and $\dnf(i_{0,1}, i_{0,2},i_{1,3},i_{2,3})$ where $i_{\ell, m}$ is the inclusion map for every $\ell, m$. We read  $A_1 \dnf^{A_3}_{A_0} A_2$ as $A_1$ is \emph{free from} $A_2$ over $A_0$  in $A_3$.
\end{nota}

\begin{lemma}\label{local} (Strong local character)  Assume $A, B, C \in \mathcal{D}$.
If $A, B \leq_\M C$, then there are $ L, T \in \md$ such that $ \| L \|, \| T \| \leq  \| A \| + \kappa$, $L \leq_\M B, T \leq_\M C$, $A \leq_\M T$, $T \leq_{\mq} C$, $L \leq_{\mq} B$ and $ T \dnf^C_{L}  B$.
\end{lemma}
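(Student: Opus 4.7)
The plan is a downward L\"owenheim--Skolem argument using the Hill Lemma (Fact \ref{Hill}) simultaneously for $B$ and $C$. Set $\lambda = \|A\| + \kappa$. Fix $\md^{\leq \kappa}$-filtrations of $B$ and of $C$, and let $\mathcal{H}_B$ and $\mathcal{H}_C$ be the corresponding Hill families; by (H1), $B \in \mathcal{H}_B$ and $C \in \mathcal{H}_C$.

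I first construct, by induction on $n < \omega$, modules $T_n \in \mathcal{H}_C$ and $L_n \in \mathcal{H}_B$ of cardinality at most $\lambda$, arranged so that $A \subseteq T_0$, $T_n \cup L_n \subseteq T_{n+1}$, and $L_n \cup (T_n \cap B) \subseteq L_{n+1}$. Each step uses Proposition \ref{small} to enlarge a $\lambda$-sized subset to a Hill-family element of size $\leq \lambda$. Setting $T := \bigcup_n T_n$ and $L := \bigcup_n L_n$, property (H2) gives $T \in \mathcal{H}_C$ and $L \in \mathcal{H}_B$, and (H3) applied to $T \subseteq C$ in $\mathcal{H}_C$ and $L \subseteq B$ in $\mathcal{H}_B$ yields $C/T, B/L \in \md$; thus $T \leq_\mq C$ and $L \leq_\mq B$, and in particular $T \leq_\M C$ and $L \leq_\M B$. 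The back-and-forth guarantees $T \cap B = L$: $\supseteq$ is immediate, while $T \cap B = \bigcup_n(T_n \cap B) \subseteq \bigcup_n L_{n+1} \subseteq L$ gives the other direction. Coherence of $\leq_\M$ applied to $A \subseteq T$ with $A \leq_\M C$ and $T \leq_\M C$ gives $A \leq_\M T$, and $\|L\| \leq \lambda$ is clear from the cardinality bound on the $L_n$.

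Next I verify $T \dnf^C_L B$. Form the pushout $(P, h_1, h_2)$ of $L \hookrightarrow T$ and $L \hookrightarrow B$ in $\rmod R$, with canonical $t : P \to C$. By Lemma \ref{test}, $T \cap B = L$ implies that $t$ is monic with $t(P) = T + B$, so it suffices to check $T + B \leq_\M C$. When $\M$ is the class of all embeddings this is immediate, completing the proof in that case.

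The main obstacle is the pure/RD case, since sums of pure (resp.\ RD) submodules need not be pure (resp.\ RD). My plan is to enrich the back-and-forth with a L\"owenheim--Skolem-style witness collection: at every stage $n$ I also list finitely presented (resp.\ cyclically presented) potential obstructions, i.e., finite $R$-linear systems with right-hand side in $T_n + B$ that are solvable in $C$, and use property (H4) to absorb chosen solutions into $T_{n+1} \in \mathcal{H}_C$ while keeping $\|T_{n+1}\| \leq \lambda$. The new generators feed into $L$ only through the controlled $L_{n+1}$-step, so the cardinality bound $\|L\| \leq \lambda$ is preserved at every stage. At the $\omega$-limit, every finite obstruction is absorbed, giving $T + B \leq_\M C$ and hence $T \dnf^C_L B$.
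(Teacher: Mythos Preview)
Your framework and the embedding case are correct, and the reduction of $T \dnf^C_L B$ to $T + B \leq_\M C$ via $T \cap B = L$ and Lemma \ref{test} is the right observation. Two gaps appear in the pure/RD case. The minor one: $T \in \mathcal{H}_C$ gives $C/T \in \md$ by (H3), but \emph{not} $T \leq_p C$; the Hill family consists of ordinary submodules, and $C/T \in \md$ does not force purity (take $\md = \rmod R$). The paper fixes this by interleaving purification steps --- auxiliary chains $S_n \leq_p C$ and $K_n \leq_p B$ sandwiched between the Hill-family chains --- so that $T = \bigcup T_n = \bigcup S_n$ and $L = \bigcup L_n = \bigcup K_n$ are unions of pure submodules and hence pure. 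This is easy once noticed.

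The serious gap is the cardinality of your witness collection. You propose to absorb into $T_{n+1}$ a solution for every system with right-hand side in $T_n + B$, but there are at least $\|B\|$ such right-hand sides, and $\|B\|$ is not bounded by $\lambda = \|A\| + \kappa$; the claim $\|T_{n+1}\| \leq \lambda$ cannot be maintained. You are indexing witnesses by the wrong set and absorbing them into the wrong module. The paper works on the $B$-side: for each of the $\leq \lambda$ pairs $(\bar t, \varphi)$ with $\bar t \in T_n^{<\omega}$ and $\varphi$ a pp-formula, if \emph{some} $\bar b \in B$ satisfies $C \vDash \varphi[\bar t, \bar b]$, one fixed such $\bar b$ is placed into $K_n \subseteq L$. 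At the limit this gives: for every $\bar t \in T$ and pp-$\varphi$, any realisation of $\varphi(\bar t, -)$ in $B$ inside $C$ is already realised in $L$. Together with $T \leq_p C$ and $B \leq_p C$, a short linear-algebra computation (the content of \cite[4.14]{maz2}) yields $T + B \leq_p C$: given $C \vDash \psi[\bar t + \bar b]$, use the witness condition to swap $\bar b$ for some $\bar l \in L$, solve $\psi$ at $\bar t + \bar l$ inside $T$ by purity of $T$ in $C$, and then use purity of $B$ in $C$ to correct the solution by an element of $B$ back to the original parameter $\bar t + \bar b$.
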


\begin{proof} We prove the case when $\M = \text{Pure}$, the case when $\M = \text{Emb}$ follows from this case and the case when $\M = \text{RD}$ can be obtained modifying the proof as in \cite[3.9]{mj}.

 Let $\mathcal{H}_B$ and $\mathcal{H}_C$ be the Hill families from Fact \ref{Hill} (for the regular cardinal $\kappa^+$ and $\mathcal{C}= \mathcal{D}^{\leq \kappa}$) for $\mathcal{C}$-filtrations $\mathcal{T}_B, \mathcal{T}_C$ of $B$ and $C$ respectively. 

Let $\lambda = \| A \| + \kappa$.
 We construct four increasing chains $\{ K_n \mid n < \omega \} \subseteq \rmod R$, $\{ L_n \mid n < \omega\} \subseteq \md$, $\{ S_n \mid n < \omega \} \subseteq \rmod R$ and $\{ T_n \mid n < \omega \} \subseteq \md$ such that:
\begin{enumerate}
\item $S_0 =A$
\item For every $n < \omega$, $S_n \leq_\M C$, $T_n \in \mathcal{H}_C$ and $S_n \subseteq T_n \subseteq S_{n+1} $
\item  For every $n < \omega$, $K_n \leq_\M B$, $L_n \in \mathcal{H}_B$ and $K_n \subseteq L_n  \subseteq K_{n+1}$
\item For every $n < \omega$, $L_n \subseteq S_{n+1}$
\item For every $n < \omega$, $ \|T_n \|, \| L_n \| \leq \lambda$
\item If $\bar{t} \in T_{n}$, $\varphi(\bar{x}, \bar{y})$ is a $pp$-formula and there is $\bar{b} \in B$ such that $C \vDash \varphi[\bar{t}, \bar{b}]$, then there is $\bar{k}\in K_{n}$ such that $N \vDash \varphi[\bar{t}, \bar{k}]$.

\end{enumerate} 

\fbox{Construction} Fix a well-order of $B^{<\omega}$.

\underline{Base:} Let $S_0 =A$ and $T_0 \in \mathcal{H}_C$ such that $S_0 \subseteq T_0$ and $\| T_0 \| \leq \| S_0 \| + \kappa$ given by Proposition \ref{small}. For each $\bar{t} \in T_0$ and $\varphi(\bar{x}, \bar{y})$ a $pp$-formula, if there is $\bar{b} \in B$ such that $C \models \varphi[\bar{t}, \bar{b}]$ let $\bar{b}_\varphi^{\bar{t}}$ be the least element in $B^{<\omega}$ satisfying this and $\bar{0}$ otherwise. Let $K_{0}$ be the structure obtained by purifying $X= \bigcup \{ \bar{b}_\varphi^{\bar{t}} \mid \bar{t} \in T_0 \text{ and } \varphi \text{ is a }pp\text{-formula} \}$ in $B$ and observe $\| K_0 \| \leq \lambda$.  Let $L_0 \in \mathcal{H}_B$ such that $K_0 \subseteq L_0$ and $\| L_0 \| \leq \| K_0\| + \kappa$ given by Proposition \ref{small}. One can check that everything is  as required.

\underline{Induction step:} Let $S_{n+1}$ be the structured obtained by purifying $L_{n} \cup T_n$ in $C$ and observe $\| S_{n+1} \| \leq \lambda$. Let $T_{n+1} \in \mathcal{H}_C$ such that $S_{n+1} \subseteq T_{n+1}$ and $\| T_{n+1} \| \leq \| S_{n+1} \| + \kappa$ given by Proposition \ref{small}.  Construct $K_{n +1}$ as we constructed $K_{0}$, but replacing $T_{0}$ by $T_{n+1}$ and adding a copy of $L_n$ to $X$. Observe $\| K_n \| \leq \lambda$. Let $L_{n+1}  \in \mathcal{H}_B$ such that $K_{n +1} \subseteq L_{n+1}$ and $\| L_{n+1} \| \leq \| K_{n +1} \| + \kappa$ given by Proposition \ref{small}. One can check that everything is  as required. 

\fbox{Enough} Let $L = \bigcup_{n < \omega} L_n $ and $T = \bigcup_{n< \omega} T_n$. Realize that $L, T \in \md $ by Conditions (2), (3)  and Fact \ref{Hill}.(H2) and  $\| L \|, \| T \| \leq \lambda = \| A \| + \kappa$ by Condition (5). Moreover, $L \leq_\M T, B \leq_\M C$ and $A \leq _\M T$.  Furthermore, $T \leq_{\mq} C$ and $L \leq_{\mq} B$ by Fact \ref{Hill}.(H2),(H3) and Conditions (2), (3). Finally, $T \dnf^C_{L}  B$ by Condition (6) doing an analogous argument to that of \cite[4.14]{maz2}. \end{proof}

The next result uses a similar argument to \cite[12.6]{vasey16}, but  we have weaker assumptions on $\dnf$.

 \begin{prop}\label{claim1} Assume $A_0, A_1, A_1', A_2, A_3, A_3' \in \mathcal{D}$.
  Assume $A_1 \dnf^{A_3}_{A_0} A_2$ and $A_1' \dnf^{A_3'}_{A_0} A_2$ such that $A_0 \leq_{\mq} A_2$. Let $\bar{b}, \bar{b}'$ be enumerations of $A_1, A_1'$ respectively. If $(\bar{b}, A_0, A_1))E_{\text{at}}^\D (\bar{b}', A_0, A_1')$, then $\gtp_\mathbf{D}(\bar{b}/A_2; A_3)=\gtp_\mathbf{D}(\bar{b}'/A_2; A_3')$.

 \end{prop}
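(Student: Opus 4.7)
The plan is to use the hypothesis $(\bar{b}, A_0, A_1) E_{\text{at}} (\bar{b}', A_0, A_1')$ to transport the second independent square so that both live over the same span, then apply weak uniqueness (Lemma \ref{wu}) to get $\sim$-equivalence, and finally read off from $\sim_*$ the required $E_{\text{at}}$ witnesses over $A_2$.

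First, unpacking the hypothesis, there exist $N \in \md$ and $\D$-embeddings $f : A_1 \to N$, $f' : A_1' \to N$ with $f \rest A_0 = f' \rest A_0 = \id_{A_0}$ and $f(\bar{b}) = f'(\bar{b}')$. Since $\bar{b}$ and $\bar{b}'$ enumerate $A_1$ and $A_1'$, respectively, the equality $f(\bar{b}) = f'(\bar{b}')$ forces $f(A_1) = f'(A_1')$, so $\phi := (f')^{-1} \circ f \colon A_1 \to A_1'$ is an isomorphism fixing $A_0$ with $\phi(\bar{b}) = \bar{b}'$.

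Next, I use $\phi$ to pull back the square witnessing $A_1' \dnf^{A_3'}_{A_0} A_2$ to a square over $A_1$. By invariance of $\dnf$ (Lemma \ref{b-in}.(2)), the diagram
\[
\begin{tikzcd}
A_1 \arrow[r, "i_{1',3'} \circ \phi"] & A_3' \\
A_0 \arrow[u, "i_{0,1}"] \arrow[r, "i_{0,2}"'] & A_2 \arrow[u, "i_{2',3'}"']
\end{tikzcd}
\]
is again independent (where $i_{\ell,m}$ denote the various inclusions). Now both squares share the span $(i_{0,1}, i_{0,2})$. By hypothesis $A_0 \leq_{\mq} A_2$, so $i_{0,2}$ is a $\mq$-embedding. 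Applying symmetry (Lemma \ref{basic-nf}.(3)) to put this map in the first coordinate, then weak uniqueness (Lemma \ref{wu}), and symmetry once more, we obtain
\[
(i_{0,1}, i_{0,2}, i_{1,3}, i_{2,3}) \sim (i_{0,1}, i_{0,2}, i_{1',3'} \circ \phi, i_{2',3'}).
\]

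Finally, I unfold this $\sim$-equivalence as a finite zig-zag of $\sim_*$ steps. Each step produces some $T \in \md$ and $\D$-embeddings $s_1, s_2$ fitting into the defining diagram of $\sim_*$; in particular, the commutativity of the $A_2$-leg forces $s_1$ and $s_2$ to agree on (the images of) $A_2$, and the commutativity of the $A_1$-leg, applied to $\bar{b}$, gives $s_2(\bar{b}) = s_1(\phi(\bar{b})) = s_1(\bar{b}')$. Reading each such $T$ as an $E_{\text{at}}^{\D}$-witness between consecutive triples $(\bar{b}^*, A_2, A_3^*)$ along the zigzag, and composing, yields $(\bar{b}, A_2, A_3)\, E^{\D}\, (\bar{b}', A_2, A_3')$, i.e.\ $\gtp_\mathbf{D}(\bar{b}/A_2; A_3) = \gtp_\mathbf{D}(\bar{b}'/A_2; A_3')$.

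The only genuinely delicate point is the bookkeeping in the last paragraph: making sure that the witnesses $s_1, s_2$ of a $\sim_*$ step, which are $\D$-embeddings of the whole $A_3, A_3'$, restrict to the $E_{\text{at}}$-witnesses required by Definition \ref{gtp-def} over the base $A_2$ (and not merely over $A_0$). Once the enumeration remark in the first paragraph is used to identify $A_1$ and $A_1'$ via $\phi$, this is automatic from commutativity of the $A_2$-leg in the $\sim_*$ diagram.
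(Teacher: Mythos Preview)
Your proof is correct and follows essentially the same route as the paper: extract an isomorphism $\phi\colon A_1\to A_1'$ over $A_0$ from the $E_{\text{at}}$ hypothesis, transport one independent square via invariance so that both sit over the span $(i_{0,1},i_{0,2})$, apply weak uniqueness (using $A_0\leq_{\mq}A_2$ after a symmetry flip), and then convert the two $\sim_*$ witnesses into $E_{\text{at}}^{\D}$ witnesses over $A_2$. The paper carries out the last step a bit more explicitly---it renames the pushout as $P^*\cong P$ so that $A_2\leq_\M P^*$ literally, and then records the two $E_{\text{at}}$ relations $(\bar b',A_2,A_3')\,E_{\text{at}}\,(s^{-1}k_1(\bar b'),A_2,P^*)\,E_{\text{at}}\,(\bar b,A_2,A_3)$---whereas your final paragraph collapses the two steps when you write ``$s_2(\bar b)=s_1(\bar b')$''; in each individual $\sim_*$ step the $A_1$-leg only matches $\bar b$ (resp.\ $\bar b'$) with the intermediate element $k_1(\bar b)$ in the pushout, and it is the composition of the two steps that yields the conclusion.
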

\begin{proof} By definition of $)E_{\text{at}}^\D$ there is $g: A_1 \xrightarrow[A_0]{} L$ a $\mathbf{D}$-embedding such that $A_1' \leq_\M L$ and $g(\bar{b}) = \bar{b}'$. Then there is $f: A_3 \cong N$ such that $f \rest_{A_1} = g$ and $g(A_1) \leq_\M N$. Then by invariance applied to $f$ and $A_1 \dnf^{A_3}_{A_0} A_2$, we get that $A_1' \dnf^{N}_{A_0} f(A_2)$. Then by monotonicity the following is an independent square:

\[\begin{tikzcd}
A_1' \arrow[r ]     &  N         \\
A_0 \arrow[u] \arrow[r] & A_2 \arrow[u, "f"]
\end{tikzcd}\]

Then by weak uniqueness (Lemma \ref{wu}) applied to the previous diagram and $A_1' \dnf^{A_3'}_{A_0} A_2$, there are $(k_1: A_1' \to P, k_2: A_2 \to P)$ such that   $(i_{0,1}, i_{0,2}, i_{1,3}, i_{2,3} ) \sim_* (i_{0,1}, i_{0,2}, k_1, k_2)$ via $(h_P: P \to H, h_A: A_3' \to H)$ and $ (i_{0,1}, i_{0,2}, k_1, k_2) \sim_* (i_{0,1}, i_{0,2}, i_{1, N}, f)$ via $(d_N: N \to D, d_P: P \to D)$.  Let $s: P^* \cong P$ such that $A_2 \leq_\M P^*$  and $s \rest A_2 = k_2$.

Observe that the following diagram commutes

\[\begin{tikzcd}
A_3' \arrow[r, "h_A" ]     &  H         \\
A_2 \arrow[u] \arrow[r] & P^* \arrow[u, "h_P \circ s"]
\end{tikzcd}\]

and $h_A(\bar{b}') = h_P \circ s ( s^{-1} \circ k_1 (\bar{b}'))$. Hence  $( \bar{b}', A_2, A_3') )E_{\text{at}}^\D (s^{-1} \circ k_1 (\bar{b}'), A_2, P^*)$. Similarly, it follows that $(s^{-1} \circ k_1 (\bar{b}'), A_2, P^*) )E_{\text{at}}^\D (\bar{b}, A_2, A_3) $ using $(d_P \circ s, d_N \circ f)$. Therefore, $\gtp_{\mathbf{D}}(\bar{b}'/A_2; A_3') =\gtp_{\mathbf{D}}(\bar{b}/A_2; A_3)$. \end{proof}

We obtain our main result. We repeat the assumptions for convenience of the reader.

\begin{theorem}\label{st} Assume $\md$ is $\kappa^+$-deconstructible for $\kappa \geq \operatorname{card}(R) + \aleph_0$ and $\M$ is the collection of either the class of all embeddings, all RD-embeddings or all pure embeddings.

If $\lambda^{\kappa}=\lambda$, then $\D = (\md, \leq_\M)$ is stable in $\lambda$.
\end{theorem}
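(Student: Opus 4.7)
The plan is to build an injection from $\gS_\D(M)$ into a set of cardinality at most $\lambda$ consisting of pairs $(L, q)$ where $L$ is a small $\mq$-submodule of $M$ and $q$ is a type over $L$.

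Fix $M \in \md$ with $\|M\| = \lambda$ and an arbitrary $p = \gtp_{\D}(b/M; N) \in \gS_\D(M)$. By the L\"owenheim-Skolem-Tarski axiom (Lemma \ref{ls-axiom}), pick $A \in \md$ with $b \in A \leq_\M N$ and $\|A\| \leq \kappa$. Apply strong local character (Lemma \ref{local}) to $A$, $M$, $N$ to obtain $L, T \in \md$ of cardinality at most $\kappa$ satisfying $L \leq_\mq M$, $A \leq_\M T$, $T \leq_\mq N$, and $T \dnf^N_L M$. Set $\Phi(p) := (L, \gtp_\D(b/L; T))$. The codomain lies in $\bigcup\{\{L\} \times \gS_\D(L) : L \in \md,\, L \leq_\mq M,\, \|L\| \leq \kappa\}$. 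The number of $L \leq_\mq M$ in $\md$ of cardinality $\leq \kappa$ is at most $\lambda^\kappa = \lambda$; for each such $L$, the standard bound gives $|\gS_\D(L)| \leq 2^\kappa \leq \lambda^\kappa = \lambda$. Thus the codomain has cardinality at most $\lambda$.

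The crux is injectivity of $\Phi$. Suppose $\Phi(p_1) = \Phi(p_2) = (L, q)$ with $p_i = \gtp_\D(b_i/M; N_i)$ witnessed by $T_i \dnf^{N_i}_L M$ and $q = \gtp_\D(b_i/L; T_i)$. From $\gtp_\D(b_1/L; T_1) = \gtp_\D(b_2/L; T_2)$ there is a finite $E_{\text{at}}^{\D}$-chain of triples connecting $(b_1, L, T_1)$ to $(b_2, L, T_2)$. For each intermediate triple $(c, L, S)$, weak existence (the corollary after Lemma \ref{push}) applied to the $\mq$-embedding $L \hookrightarrow M$ and the $\M$-embedding $L \hookrightarrow S$ produces $R \in \md$ with $S \dnf^R_L M$. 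Since $L \leq_\mq M$ throughout, Proposition \ref{claim1} applies to each adjacent pair in the chain, converting every $E_{\text{at}}^{\D}$-step over $L$ into an equality of types over $M$; composing the resulting equalities yields $p_1 = p_2$.

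The main obstacle I anticipate is the careful bookkeeping involved in threading Proposition \ref{claim1} across the finite $E_{\text{at}}^{\D}$-chain, since each use of weak existence introduces a new ambient module that must be reconciled (via renaming/embedding of the isomorphic copies produced at each $E_{\text{at}}^{\D}$-step) to preserve the applicability of Proposition \ref{claim1} at the next step. All the conceptual weight, however, sits in Lemma \ref{local} and Proposition \ref{claim1}, both already available.
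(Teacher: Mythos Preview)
There is a genuine gap in the injectivity step. Proposition \ref{claim1} does \emph{not} accept an $E_{at}$-relation between single elements: its hypothesis requires $(\bar{b}, A_0, A_1)\,E_{at}\,(\bar{b}', A_0, A_1')$ where $\bar{b}$ and $\bar{b}'$ are \emph{enumerations} of $A_1$ and $A_1'$. Since $\bar{b}$ already exhausts $A_1$, this hypothesis is exactly the statement that there is an isomorphism $A_1 \cong A_1'$ fixing $A_0$ and carrying $\bar{b}$ to $\bar{b}'$. Your chain, however, is an $E_{at}$-chain for the \emph{single} elements $b_1, b_2$: the relation $(c_k, L, S_k)\,E_{at}\,(c_{k+1}, L, S_{k+1})$ only gives embeddings of $S_k$ and $S_{k+1}$ into a common $W_k$ matching $c_k$ with $c_{k+1}$, not an isomorphism $S_k \cong S_{k+1}$ over $L$. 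So Proposition \ref{claim1} does not apply at any link of the chain, and in the absence of amalgamation there is no available substitute (Lemma \ref{w-uniq}, which would do the job, explicitly assumes amalgamation).

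The paper's proof avoids this by using a \emph{finer} invariant than the Galois type $\gtp_\D(b/L;T)$: it records instead the isomorphism type of the pointed structure $(T, b, (a_\alpha)_{\alpha<\kappa})$ where $(a_\alpha)$ enumerates $L$. There are still only $2^\kappa \leq \lambda$ such isomorphism types, so the counting goes through unchanged, and now a collision of invariants gives directly an isomorphism $T_1 \cong T_2$ over $L$ sending $b_1$ to $b_2$ --- precisely the $E_{at}$-relation between full enumerations that Proposition \ref{claim1} needs, applied once rather than along a chain. Your overall architecture (locate a small $L \leq_{\mq} M$ via Lemma \ref{local}, then show that a bounded amount of data over $L$ determines the type over $M$) is the same as the paper's; the fix is simply to replace $\gtp_\D(b/L;T)$ by the isomorphism class of $(T,b)$ over $L$ in the definition of $\Phi$.
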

\begin{proof}
Let $\lambda$ be such that $\lambda^{\kappa}=\lambda$. Let $A \in \md$ with $\| A \| = \lambda$ and assume for the sake of contradiction that $| \gS_\D(A) | > \lambda$. Let  $\{ p_i \mid i < \lambda^+ \}$ be an enumerations without repetitions of types in $\gS(A)$. For every $i < \lambda^+$, let $(b_i, B_i)$ such that $A \leq_\M B_i$, $b_i \in B_i$ and   $p_i = \gtp_\D(b_i/ A; B_i)$.

It follows from Lemma \ref{ls-axiom} and Lemma \ref{local}, that for every $i < \lambda^+$, there are $A_{i} \leq_{\mq} A$ and $B^*_{i} \leq_{\mq} B_i$ such that $\| B_i^* \| = \| A_{i} \| = \kappa$, $b_i \in B^*_{i}$  and $B^*_{i} \dnf^{B_i}_{A_{i}}  A$.

Let $\Phi: \lambda^+ \to [A]^{\kappa }= \{ A' \mid A' \leq_{\M_{\mathcal{Q}}} A \text{ and }  \| A' \|  = \kappa \}$ be given by $\Phi(i)=A_{i}$. Observe that $|[A]^{ \kappa }| = \lambda$ as $\lambda^\kappa =\lambda$. Then by the pigeon-hole principle there is $S \subseteq \lambda^+ $ and $A^* \in [A]^{ \kappa }$ such that $|S| = \lambda^+$ and for every $i \in S$ we have that $\Phi(i)=A^*$.

Let $\{ a^*_\alpha \mid \alpha < \kappa \}$ be  an enumeration of $A^*$. Let $\{c\} \cup \{ d_\alpha \mid \alpha < \kappa \}$ be new constant symbols not in the language of modules and  $ \Delta = \{ (N, c^N, (d_\alpha^N)_{\alpha < \kappa})/ \cong  \mid N \in \md \text{ with } \|N\| = \kappa, c^N \in N \text{ and } d_\alpha^N \in N \text{ for all } \alpha < \kappa \}$.

 Let $\Psi: S \to \Delta$ be given by $\Psi(i)= (B^*_i, b_i, (a^*_\alpha)_{\alpha < \kappa})/\cong$. Since $\| \Delta \| \leq 2^\kappa  \leq \lambda^\kappa  = \lambda$. By the pigeon-hole principle there are $ i \neq j \in S$ and  $f: (B^*_i, b_i, (a^*_\alpha)_{\alpha < \kappa}) \cong (B^*_j, b_j, (a^*_\alpha)_{\alpha < \kappa})$. Observe that $f: B^*_i \to B^*_j$ is an isomorphism, $f(b_i) = b_j$ and $f \upharpoonright A^* = \id_{A^*}$.

Let $\bar{c}$ be an enumeration of  $B^*_i$ starting with $b_i$ and $\bar{c}'= f(\bar{c})$. Observe that $\bar{c}'$ is an enumeration of $B^*_j$ as $f$ is bijective. Furthermore $(\bar{c}, A^*, B^*_i) )E_{\text{at}}^\D (\bar{c}', A^*, B^*_j)$ as $f$ is a witness for it because  $f \upharpoonright A^* = \id_{A^*}$. Since $B^*_{i} \dnf^{B_i}_{A^*}  A$, $B^*_{j} \dnf^{B_j}_{A^*}  A$ and $A^* \leq_{\mq} A$, it follows from Proposition \ref{claim1} that $\gtp_\D(\bar{c}/A, B_i) = \gtp_\D(\bar{c}'/A, B_j)$. Observe that $f(b_i)= b_j$ and $b_i \in \bar{c}$, so it follows that  $\gtp_\D(b_i/A, B_i) = \gtp_\D(b_j/A, B_j)$. This is a contradiction as $p_i \neq p_j$. \end{proof}

The following result was known for $n=0$ and pure embeddings \cite[4.3]{lrvcell} but it is new for embeddings and  RD-embeddings when $n=0$ and for any choice of $\M$ for $n>0$. All these classes are AECs (see Example \ref{q2}).

\begin{cor} 
 $(\mathcal F _n, \leq_\M)$  is stable where $\M$  is the class of all embeddings, all RD-embeddings or all pure embeddings, and $n \in \mathbb{N}$.
\end{cor}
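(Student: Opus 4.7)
The plan is to derive this as a direct consequence of Theorem \ref{st}. The definition of stability for an AC only requires the existence of one cardinal $\lambda \ge \LS(\A)$ in which the class is stable, so I only need to verify Hypothesis \ref{hyp} for $(\mathcal F_n, \M)$ and then invoke the theorem for a suitable $\lambda$.

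First I would recall that, as mentioned in the introduction (and cf.\ \cite[Chap.\ 8]{GT}), for every $n \geq 0$ and every ring $R$ the class $\mathcal F_n$ is deconstructible. Hence there exists an infinite cardinal $\kappa_0$ such that $\mathcal F_n$ is $\kappa_0^+$-deconstructible. Setting $\kappa = \kappa_0 + \operatorname{card}(R) + \aleph_0$ one still has $\mathcal F_n$ being $\kappa^+$-deconstructible (any $\kappa_0^+$-filtration is also a $\kappa^+$-filtration), and $\kappa \ge \operatorname{card}(R)+\aleph_0$. Thus for any of the three admissible choices of $\M$ (all embeddings, all pure embeddings, all $RD$-embeddings) the pair $(\mathcal F_n, \M)$ satisfies Hypothesis \ref{hyp}.

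Next I would produce a cardinal $\lambda$ satisfying $\lambda^\kappa = \lambda$. The simplest choice is $\lambda = 2^\kappa$: then $\lambda^\kappa = (2^\kappa)^\kappa = 2^{\kappa \cdot \kappa} = 2^\kappa = \lambda$, and clearly $\lambda \ge \kappa \ge \LS(\mathbf{D})$ by Lemma \ref{ls-axiom}. Applying Theorem \ref{st} to this $\lambda$ yields that $(\mathcal F_n, \leq_\M)$ is stable in $\lambda$, and therefore stable by the definition of stability. There is essentially no obstacle here; the work is all in Theorem \ref{st}. (If desired one could even note that the class of fixed points $\lambda = \lambda^\kappa$ is a proper class, so stability in fact holds in arbitrarily large cardinals.)
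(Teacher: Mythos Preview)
Your proposal is correct and matches the paper's approach: the corollary is stated without proof immediately after Theorem \ref{st}, relying on the fact (recalled in Example \ref{q2}(1) and the introduction) that each $\mathcal F_n$ is deconstructible, so Hypothesis \ref{hyp} is satisfied and Theorem \ref{st} applies. Your explicit choice of $\lambda = 2^\kappa$ simply spells out the obvious detail the paper leaves to the reader.
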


The following result is new. It is worth mentioning that these classes will in most cases not be AECs.

\begin{cor} 
 $(Free, \leq_\M)$ and  $(\mathcal P _{n}, \leq_\M)$ are stable where $\M$  is the class of all embbedings, all RD-embeddings or all pure embeddings, and $n \in \mathbb{N}$. 
\end{cor}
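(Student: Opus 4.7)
The plan is to obtain this corollary as a direct application of Theorem \ref{st}. The theorem gives stability in every cardinal $\lambda$ with $\lambda^\kappa = \lambda$, provided the class $\md$ is $\kappa^+$-deconstructible for some $\kappa \geq \operatorname{card}(R) + \aleph_0$, and this is precisely the hypothesis of the corollary for both $Free$ and $\mathcal P_n$.

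First I would recall from the preliminaries that both classes satisfy Hypothesis \ref{hyp} for a suitable cardinal $\kappa$. For $Free$ over any ring $R$, deconstructibility is the classical Kaplansky theorem: every free module is $\mathcal{C}$-filtered, where $\mathcal{C}$ is a set of representatives of free modules of cardinality $\leq \operatorname{card}(R) + \aleph_0$. For $\mathcal P_n$, deconstructibility for every ring is the statement mentioned right after Definition \ref{defd} (and established in \cite{GT}). Thus for each of these classes there is a fixed infinite cardinal $\kappa \geq \operatorname{card}(R) + \aleph_0$ witnessing $\kappa^+$-deconstructibility, and Hypothesis \ref{hyp} is satisfied for any choice of $\M$ among embeddings, $RD$-embeddings, and pure embeddings.

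Second, I would produce cardinals $\lambda$ such that $\lambda^\kappa = \lambda$. This is elementary: for any cardinal $\mu$, the cardinal $\lambda = \mu^\kappa$ satisfies $\lambda^\kappa = \mu^{\kappa \cdot \kappa} = \mu^\kappa = \lambda$, so in particular there are arbitrarily large such $\lambda$, all of them $\geq \kappa \geq \LS(\D)$ by Lemma \ref{ls-axiom}. Applying Theorem \ref{st} to any such $\lambda$ yields that $\mathbf{D} = (\md, \leq_\M)$ is stable in $\lambda$, which is all that is required by the definition of stability for an abstract class.

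There is no genuine obstacle here; the corollary is a packaging of the main theorem once the deconstructibility of $Free$ and $\mathcal{P}_n$ is invoked. The only subtle point worth flagging in the write-up is that (as emphasized in the introduction and in Example \ref{illus4}) these classes need not be AECs and $(\mathcal P_0, \leq_\M)$ may even fail the amalgamation property in every infinite cardinal; nevertheless Theorem \ref{st} was proved without relying on amalgamation, so the conclusion goes through uniformly for $Free$ and for each $\mathcal P_n$ with $n \in \mathbb{N}$ and each of the three choices of $\M$.
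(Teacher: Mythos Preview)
Your proposal is correct and matches the paper's (implicit) approach: the corollary is stated without proof, being an immediate application of Theorem \ref{st} together with the deconstructibility of $Free$ and $\mathcal P_n$ recorded after Definition \ref{defd}. One tiny quibble: deconstructibility of $Free$ is even more elementary than Kaplansky's theorem (just filter a free module by the submodules generated by initial segments of a well-ordered basis, with consecutive quotients $\cong R$), but this does not affect the argument.
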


The following result was known for $n=0$ \cite[\S 3]{maz2}, but it is new for $n > 0$. All these classes are AECs (see Example \ref{q2}).

\begin{cor}  Assume $R$ is right noetherian. Then
 $(\mathcal I_n, \leq_\M)$ is stable where $\M$  is the class of all embeddings, all RD-embeddings or all pure embeddings, and $n \in \mathbb{N}$. 
 \end{cor}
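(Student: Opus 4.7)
The plan is to reduce the statement directly to Theorem \ref{st}, which is the main result already established. The only content is verifying that the hypotheses of that theorem apply to $\mathcal{I}_n$ when $R$ is right noetherian.

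The first step is to record that $\mathcal I_n$ is deconstructible. This is one of the classical examples listed in the introduction and in the discussion following Definition \ref{defd}: over a right noetherian ring, the class of modules of injective dimension $\leq n$ is deconstructible (the noetherian hypothesis is what lets one realize $\mathcal I_n$ as the left-hand class of a cotorsion pair generated by a set of $<\kappa$-presented modules for some infinite $\kappa$, via the Eklof--Trlifaj theorem; see \cite[Chap.\ 6]{GT}). Hence there exists an infinite cardinal $\kappa \geq \operatorname{card}(R) + \aleph_0$ such that $\mathcal I_n$ is $\kappa^+$-deconstructible, so Hypothesis \ref{hyp} is satisfied for the pair $(\mathcal I_n, \M)$ for each of the three choices of $\M$.

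The second step is simply to apply Theorem \ref{st}: for this $\kappa$, pick any $\lambda$ with $\lambda^{\kappa}=\lambda$ (for example $\lambda = 2^{\kappa}$, or more generally $\lambda = \mu^{\kappa}$ for any cardinal $\mu$). Theorem \ref{st} then yields that $(\mathcal I_n, \leq_\M)$ is stable in $\lambda$, and hence stable.

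There is no genuine obstacle here; the result is included as an immediate application of the main theorem, in the same spirit as the two preceding corollaries on $\mathcal F_n$ and on $Free$ / $\mathcal P_n$. The only thing worth emphasizing is that, unlike the corollary for $\mathcal P_n$ (which in general fails to be an AEC), the classes $\mathcal I_n$ over a right noetherian ring are closed under direct limits and hence, by Corollary \ref{corAEC}, each $(\mathcal I_n, \leq_\M)$ is actually an AEC -- so the stability conclusion holds in the classical AEC sense as well.
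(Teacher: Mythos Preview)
Your proposal is correct and matches the paper's approach exactly: the corollary is stated without proof in the paper, as it follows immediately from Theorem \ref{st} once one knows that $\mathcal I_n$ is deconstructible over a right noetherian ring (as noted after Definition \ref{defd} and in Example \ref{q2}). Your additional remark that $(\mathcal I_n, \leq_\M)$ is in fact an AEC is also consistent with the paper's Example \ref{q2}(1).
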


 \subsection{Tameness}  Recall that an AC of modules $\mathbf{A}$ is \emph{$\kappa$-tame} for $\kappa \geq \LS( \mathbf{A})$, if for every $A \in \mathcal{A}$ and $p, q \in \gS(A)$, if $p \neq q$ then there is  $B \lesssim_\mathbf{A} A$ such that $p \rest B \neq q \rest B$ and $\| B \| \leq \kappa$. Tameness was first isolated for AECs in \cite{tamenessone} and has played an important role in the study of abstract elementary classes.

 We study tameness for ACs  $\D = (\mathcal{D}, \leq_\M)$ assuming $(\mathcal{D}, \M$)  satisfies Hypothesis \ref{hyp}.

Lemma \ref{claim3}, Proposition \ref{claim2} and Lemma \ref{w-uniq} are similar to \cite[8.5]{lrv1}, but we have weaker assumptions and a weaker conclusion. 

\begin{lemma}\label{claim3} (Local uniqueness for types) Assume that $A_1 \dnf^{A_3}_{A_0} A_2$, $A_0 \leq_{\mq} A_2$, $A_1 \leq_{\mq} A_3$ and $\bar{b}_1, \bar{b}_2 \in A_1^{<\infty}$. If $(\bar{b}_1, A_0, A_3) E_{\text{at}}^\D (\bar{b}_2, A_0, A_3)$, then $\gtp_\D(\bar{b}_1/A_2; A_3)=\gtp_\D(\bar{b}_2/A_2; A_3)$.
\end{lemma}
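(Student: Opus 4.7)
The plan is to reduce the desired type equality to an $E_{\text{at}}$-equivalence inside the pushout $P^*$ of the span $(A_0 \hookrightarrow A_1, A_0 \hookrightarrow A_2)$, where the witnessing $\mathbf{D}$-embeddings can be built directly from the hypothesis via an auxiliary pushout.

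First, I would unpack the hypothesis $(\bar b_1, A_0, A_3) E_{\text{at}} (\bar b_2, A_0, A_3)$ to obtain $\mathbf{D}$-embeddings $g_1, g_2 \colon A_3 \to N$ fixing $A_0$ with $g_1(\bar b_1) = g_2(\bar b_2)$ and $N \in \md$, and set $h_i := g_i \rest A_1 \colon A_1 \to N$. Let $(P^*, k_1, k_2)$ denote the pushout of $(A_0 \hookrightarrow A_1, A_0 \hookrightarrow A_2)$ in $\rmod R$. Using $A_0 \leq_{\mq} A_2$, Lemma \ref{push} gives $P^* \in \md$; moreover $P^*/k_1(A_1) \cong A_2/A_0 \in \md$, so $k_1 \colon A_1 \to P^*$ is itself a $\mq$-embedding. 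By the definition of $\dnf$, the independence $A_1 \dnf^{A_3}_{A_0} A_2$ provides a $\M$-embedding $t \colon P^* \to A_3$ with $t \circ k_1 = \id_{A_1}$ and $t \circ k_2 = \id_{A_2}$, and the pair $(\id_{A_3}, t)$ witnesses $(\bar b_i, A_2, A_3) E_{\text{at}} (k_1(\bar b_i), A_2, P^*)$. Hence
\[ \gtp_{\D}(\bar b_i/A_2; A_3) = \gtp_{\D}(k_1(\bar b_i)/A_2; P^*) \qquad (i = 1, 2), \]
and it suffices to show these right-hand types coincide inside $P^*$.

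Second, I would construct the $E_{\text{at}}$-witness in $P^*$ via an auxiliary pushout. Let $(T, \alpha, \beta)$ be the pushout in $\rmod R$ of the span $(A_0 \hookrightarrow N, A_0 \hookrightarrow A_2)$, where $A_0 \hookrightarrow N$ is the common restriction $h_1 \rest A_0 = h_2 \rest A_0 = \id_{A_0}$. Applying Lemma \ref{push} again with $A_0 \leq_{\mq} A_2$, we get $T \in \md$, with $\alpha \colon N \to T$ and $\beta \colon A_2 \to T$ both $\mathbf{D}$-embeddings. For each $i$, the compatible pair $(\alpha \circ h_i, \beta)$ induces through the universal property of $P^*$ a homomorphism $\phi_i \colon P^* \to T$. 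By associativity of pushouts, $T$ is also the pushout of $(k_1 \colon A_1 \to P^*, h_i \colon A_1 \to N)$; since $k_1$ is a $\mq$-embedding, Lemma \ref{push} together with the preservation of $\M$-embeddings under pushouts (noted at the start of Section \ref{amalg}) ensures that $\phi_i$ is a $\M$-embedding, hence a $\mathbf{D}$-embedding. Identifying $A_2 \leq_\M T$ via $\beta$, we then have $\phi_i \rest A_2 = \phi_i \circ k_2 = \beta = \id_{A_2}$ and
\[ \phi_1(k_1(\bar b_1)) = \alpha(h_1(\bar b_1)) = \alpha(h_2(\bar b_2)) = \phi_2(k_1(\bar b_2)), \]
so $(\phi_1, \phi_2)$ witnesses $(k_1(\bar b_1), A_2, P^*) E_{\text{at}} (k_1(\bar b_2), A_2, P^*)$. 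Combined with the reduction of the previous paragraph, this yields $\gtp_{\D}(\bar b_1/A_2; A_3) = \gtp_{\D}(\bar b_2/A_2; A_3)$.

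The main obstacle is verifying that $\phi_i$ is a $\mathbf{D}$-embedding, which is precisely where the $\mq$-hypothesis $A_0 \leq_{\mq} A_2$ does the work, via two applications of Lemma \ref{push} linked by the associativity-of-pushouts identification of $T$. Some care is also needed to track the identifications of $A_2$ as a submodule of $A_3$, $P^*$, and $T$ (via the inclusion, $k_2$, and $\beta$, respectively) so that the $E_{\text{at}}$-equivalences share a common base.
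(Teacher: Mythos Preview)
Your proof is correct and takes a genuinely different route from the paper's. The paper argues more abstractly: it uses weak existence applied to the span $(A_1 \hookrightarrow A_3', A_1 \hookrightarrow A_3)$ (this is where the hypothesis $A_1 \leq_{\mq} A_3$ enters), then combines the resulting independent square with $A_1 \dnf^{A_3}_{A_0} A_2$ via invariance and transitivity, cuts down by monotonicity, and finally invokes Proposition~\ref{claim1} to transfer the type equality. Your argument instead stays entirely at the level of explicit pushouts: you pass through the pushout $P^*$ of $(A_0 \hookrightarrow A_1, A_0 \hookrightarrow A_2)$ and the pushout $T$ of $(A_0 \hookrightarrow N, A_0 \hookrightarrow A_2)$, and use the pasting lemma to identify $T$ as the pushout of $(k_1, h_i)$, from which $\phi_i \in \mathcal M$ follows directly. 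This bypasses Proposition~\ref{claim1}, weak existence, transitivity, and monotonicity altogether. A pleasant byproduct is that your argument never uses the hypothesis $A_1 \leq_{\mq} A_3$; only $A_0 \leq_{\mq} A_2$ is needed (to get $P^*, T \in \md$ and $k_1$ a $\mq$-embedding, though even the latter is not strictly required once you know $T \in \md$ from Step~4 and $h_i \in \mathcal M$). So you in fact prove a slightly stronger statement. The trade-off is that the paper's approach is more modular, reusing the abstract machinery already in place, whereas yours is more hands-on but self-contained.
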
 
\begin{proof} By the definition of $E_{\text{at}}^\D$, there are $f, A_3'$ such that $f(\bar{b}_1) = \bar{b}_2$ and the following diagram commutes:
% https://tikzcd.yichuanshen.de/#N4Igdg9gJgpgziAXAbVABwnAlgFyxMJZABgBoBGAXVJADcBDAGwFcYkQBZAfXJAF9S6TLnyEUZYtTpNW7bgGZ+gkBmx4CRcqUk0GLNok5d5AciVC1ozRSl7ZhhfykwoAc3hFQAMwBOEALZIWiA4EEgATLoyBiBe5rF+gYhkIWGIwTj0WIzskGBsAt6JSCmhSPI0mdm5BAXKvgHllWnhfJR8QA
\[\begin{tikzcd}
A_3 \arrow[r, "f"]      & A_3'          \\
A_0 \arrow[u] \arrow[r] & A_3 \arrow[u]
\end{tikzcd}\]

Since $A_1 \leq_{\mq} A_3$ and $A_1 \leq_\M A_3'$ by weak existence (Lemma \ref{wu}) there are $g$ a $\mq$-embedding and $A_3''$ such that the following is an independent square:

\[\begin{tikzcd}
A_3' \arrow[r, "g"]      & A_3''         \\
A_1\arrow[u] \arrow[r] & A_3 \arrow[u]
\end{tikzcd}\]
 Then  by symmetry, Lemma \ref{b-in} and transitivity applied to the previous independent square, and  by symmetry applied to $A_1 \dnf^{A_3}_{A_0} A_2$, we get:

% https://tikzcd.yichuanshen.de/#N4Igdg9gJgpgziAXAbVABwnAlgFyxMJZABgBoAmAXVJADcBDAGwFcYkQBZAfWJAF9S6TLnyEU5CtTpNW7bgEZ+gkBmx4CRMsSkMWbRJy7klQtaKITtNXbIPcAzCZXD1Y5PNLydM-SAA6flBgAGZOqiIaKAAsktY+7ADmyA4A5JRhLubRpFbSenJc9ikpGWaRyPae3vkGAUGhfFIwUAnwRKDBAE4QALZIZCA4EEgeIIz0AEYwjAAKmZEgnVgJABY4Tl29-TRDSBJjk9NzZWJjMMHrAh3dfYiju4iVB1Oz86dLq5fKm7f7D0-jF7HCLvZZrDY3JBPB4ANhogKOb3YH3BVxAPxGO2GiAArPDDq8TsiwV9rltcVikHDnoiiQYUZdKHwgA
\[\begin{tikzcd}
A_2 \arrow[rr]            &      & A_3 \arrow[rr]            &      & A_3''                \\
                          & \dnf &                           & \dnf &                      \\
A_0 \arrow[rr] \arrow[uu] &      & A_1 \arrow[uu] \arrow[rr] &      & {g(A_3')} \arrow[uu]
\end{tikzcd}\]

It follows that $A_2 \dnf_{A_0}^{A_3''} g(A_3')$ by transitivity.

Let $h = g\circ f: A_3 \to A_3''$. Since $h(A_1) \leq_\M g(A_3')$ then by monotonicty and symmetry we get that $h(A_1)  \dnf_{A_0}^{A_3''}  A_2$.

Let $\bar{a}$ be an enumeration of $A_1$. Observe that $(\bar{a},A_0, A_1) E_{\text{at}}^\D (h(\bar{a}), A_0, h(A_1))$, then applying Proposition \ref{claim1} we get that  $\gtp_\D(\bar{a}/A_2; A_3) = \gtp_D(h(\bar{a})/A_2; A_3'')$. This is possible by the previous independent square and the fact that $A_1 \dnf^{A_3}_{A_0} A_2$ and $A_0 \leq_{\mq} A_2$.  Finally, observe that $\bar{b}_1 \subseteq \bar{a}$, $h(\bar{b}_1)= \bar{b}_2$ and $\bar{b}_2 \in A_3 \leq_\M A_3''$. Hence $\gtp_\D(\bar{b}_1/A_2; A_3)=\gtp_\D(\bar{b}_2/A_2; A_3)$. 
\end{proof}

We show that $\D$ has some tameness.

\begin{lemma}\label{w-tame}
Let $p_1 =\gtp_\D( b_1/B; B^*)$ and $p_2= \gtp_\D( b_2/B; B^*)$. 

If $(b_1, A, B^*) E_{\text{at}}^\D (b_2, A, B^*)$ for every $A \leq_\M B$ such that $\| A \|  \leq \kappa$, then $p_1 = p_2$.
\end{lemma}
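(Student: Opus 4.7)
The plan is to reduce the statement to the local uniqueness for types lemma (Lemma~\ref{claim3}). Since $b_1$ and $b_2$ both live inside $B^*$, we can find a single small $\mq$-submodule of $B$ over which both elements are simultaneously independent, and then feed this data into Lemma~\ref{claim3}.

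Concretely, I would first apply Lemma~\ref{ls-axiom} to produce a module $A \in \md$ with $A \leq_\M B^*$, $\{b_1, b_2\} \subseteq A$, and $\|A\| \leq \kappa$. I would then apply the strong local character result (Lemma~\ref{local}) to the triple $A, B, B^*$ to obtain $L, T \in \md$ such that $\|L\| \leq \kappa$, $A \leq_\M T \leq_\M B^*$, $L \leq_{\mq} B$, $T \leq_{\mq} B^*$, and $T \dnf^{B^*}_{L} B$. Note that $b_1, b_2 \in T$ by construction, and $L$ is an $\M$-submodule of $B$ of size at most $\kappa$.

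The hypothesis, applied to this specific $L$, then yields $(b_1, L, B^*) E_{\text{at}} (b_2, L, B^*)$. All hypotheses of Lemma~\ref{claim3} are now in place with the assignment $A_0 = L$, $A_1 = T$, $A_2 = B$, $A_3 = B^*$, with $b_1, b_2$ viewed as length-one tuples in $T$; the conclusion reads $\gtp_\D(b_1/B; B^*) = \gtp_\D(b_2/B; B^*)$, which is precisely $p_1 = p_2$.

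The only genuinely delicate point is the simultaneous choice of a single $L$ working for both $b_1$ and $b_2$; this is arranged by ensuring that the module $A$ fed into Lemma~\ref{local} already contains both elements, so that the resulting $L \leq_{\mq} B$ is common to both. Beyond this, no new ideas are required: the substantive work has already been carried out in Lemma~\ref{local} and Lemma~\ref{claim3}, and the argument amounts to packaging their outputs together.
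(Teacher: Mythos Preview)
Your proposal is correct and follows essentially the same approach as the paper: apply the L\"owenheim--Skolem--Tarski axiom to $\{b_1,b_2\}$ inside $B^*$, feed the result together with $B$ into Lemma~\ref{local} to obtain a small $\mq$-submodule of $B$ and a small $\mq$-submodule of $B^*$ witnessing independence, and then invoke Lemma~\ref{claim3}. The only cosmetic difference is naming: the paper calls your $L$ and $T$ by the names $A$ and $B_1$, respectively.
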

\begin{proof}
It follows from the argument of Lemma \ref{local} applied to $B$ and a module obtained by applying the L\"{o}wenheim-Skolem-Tarski Axiom  \ref{defaec}.(A3) (Lemma \ref{ls-axiom}) to $\{ b_1, b_2 \}$ in $B^*$, that there are  $A \leq_{\mq} B$ and $B_1 \leq_{\mq} B^*$ such that $\| A \| , \| B_1 \| \leq \kappa$, $b_1, b_2 \in B_1$  and $B_1 \dnf^{B^*}_{A}  B$. As $(b_1, A, B^*) E_{\text{at}}^\D (b_2, A, B^*)$ by assumption, we have that $p_1 = p_2$ by Lemma \ref{claim3}.
\end{proof}

Assuming $\mathbf{D}$ has amalgamation, one has that the atomic relation on tuples $E^\D_{at}$ introduced in Definition \ref{gtp-def}.(2) is transitive. Therefore, its transitive closure $E^\D$ is equal to $E^\D_{at}$. Hence it follows directly from the previous result that $\D$ is tame. 

\begin{cor}\label{tam}
If $\mathbf{D} = (\md, \leq_\M)$ has the amalgamation property, then $\mathbf{D}$ is $\kappa$-tame. 
\end{cor}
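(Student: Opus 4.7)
The plan is to reduce Corollary \ref{tam} to Lemma \ref{w-tame} via a standard amalgamation argument. The key observation is that while $E_{at}$ is in general only symmetric and reflexive, under the amalgamation property it becomes transitive when restricted to triples sharing a fixed first component; consequently $E_{at} = E$ on such triples.

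For the transitivity: suppose $(\bar{b}_1, B, N_1)\, E_{at}\, (\bar{b}_2, B, N_2)$ is witnessed by $\mathbf{D}$-embeddings $f_1\colon N_1 \to M_{12}$, $f_2\colon N_2 \to M_{12}$ fixing $B$ with $f_1(\bar{b}_1) = f_2(\bar{b}_2)$, and $(\bar{b}_2, B, N_2)\, E_{at}\, (\bar{b}_3, B, N_3)$ is witnessed by $g_2\colon N_2 \to M_{23}$, $g_3\colon N_3 \to M_{23}$ in the analogous way. Applying the amalgamation property to the span $(f_2, g_2)$ produces $M \in \md$ and $\mathbf{D}$-embeddings $h_{12}\colon M_{12} \to M$ and $h_{23}\colon M_{23} \to M$ with $h_{12}\circ f_2 = h_{23}\circ g_2$. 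Then $h_{12}\circ f_1$ and $h_{23}\circ g_3$ send $\bar{b}_1$ and $\bar{b}_3$ to the same tuple, and their common restriction to $B$ is an isomorphism $B \to B' \subseteq M$. Since $\mathcal D$ and $\leq_\M$ are closed under isomorphism, we may replace $M$ by an isomorphic model in which $B' = B$ and the restriction is the identity, thereby witnessing $(\bar{b}_1, B, N_1)\, E_{at}\, (\bar{b}_3, B, N_3)$.

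Given this, the proof of $\kappa$-tameness is immediate. Let $p_1, p_2 \in \gS_{\mathbf{D}}(B)$ with $p_1 \neq p_2$, and write $p_i = \gtp_{\mathbf{D}}(b_i/B; N_i)$. Amalgamating the span of inclusions $(B \hookrightarrow N_1, B \hookrightarrow N_2)$ produces $N \in \md$ and $\mathbf{D}$-embeddings $h_i\colon N_i \to N$ fixing $B$; set $b_i' := h_i(b_i)$, so that $\gtp_{\mathbf{D}}(b_i'/B; N) = p_i$. Since $p_1 \neq p_2$ and $E = E_{at}$ on triples over $B$, we have $(b_1', B, N) \not E_{at} (b_2', B, N)$. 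Applying the contrapositive of Lemma \ref{w-tame} yields $A \leq_\M B$ with $\|A\| \leq \kappa$ such that $(b_1', A, N) \not E_{at} (b_2', A, N)$. By transitivity of $E_{at}$ on triples over $A$ (again from amalgamation) this upgrades to $(b_1', A, N) \not E (b_2', A, N)$, i.e.\ $p_1 \upharpoonright A \neq p_2 \upharpoonright A$, which is what $\kappa$-tameness demands.

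The main obstacle is purely a matter of bookkeeping, namely carefully verifying the transitivity of $E_{at}$ under amalgamation: the ``fixes $B$ pointwise'' clause in Definition \ref{gtp-def}.(2) need not survive the amalgamation step literally, but can always be restored by passing to an isomorphic copy of the amalgam, which is legitimate because of the isomorphism-closure built into the definition of an abstract class of modules.
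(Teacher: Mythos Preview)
Your proof is correct and follows essentially the same approach as the paper: the paper's one-line argument is that under amalgamation the relation $E_{at}$ is already transitive (so $E = E_{at}$), whence Lemma \ref{w-tame} gives $\kappa$-tameness directly. You have spelled out in detail exactly this reduction, including the amalgamation step needed to place $b_1, b_2$ in a common ambient model $N$ so that Lemma \ref{w-tame} applies, and the bookkeeping for fixing $B$ pointwise after amalgamating; the paper leaves all of this implicit.
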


A natural question is if tameness holds (in ZFC) for $\mathbf{D}$ even without amalgamation. 

\begin{remark} \rm We note that in the particular case when $\mathbf{D}$ has the amalgamation property and it is moreover an AEC, Corollary \ref{tam} together with a recent result by Paolini and Shelah, \cite[1.3]{PS}, yield an alternative proof of the stability of $\mathbf{D}$. \end{remark}

 We finish the paper by showing weak uniqueness for types assuming that $\mathbf{D}$ has amalgamation. 

\begin{prop}\label{claim2}
Assume that $A_2 \dnf^{A_3}_{A_0} A_1$, $A_0 \leq_{\mq} A_2$, $A_1 \leq_{\mq} A_3$ and $A_0 \leq_\M A_1' \leq_\M A_3$. Then there exists $A_1^*, A_3^* \in \mathcal{D}$ and $f: A_1' \xrightarrow[A_0]{} A_1^*$ an $\M$-embedding such that $A_1 \leq_\M A_1^* \leq_\M A_3^*$, $A_3 \leq_\M A_3^*$, $A_1^* \leq_{\mq} A_3^*$ and $A_1^* \dnf^{A_3^*}_{A_0} A_2$.
\end{prop}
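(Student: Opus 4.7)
The plan is to build $A_1^*$ by amalgamating $A_1$ and $A_1'$ over $A_0$ using the standing amalgamation hypothesis on $\mathbf{D}$, then build $A_3^*$ as an independent amalgam of $A_3$ and $A_1^*$ over $A_1$ via weak existence, and finally deduce the required independence from transitivity of $\dnf$.

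For the first step, amalgamation produces $A_1^* \in \md$ together with $\M$-embeddings $i\colon A_1 \to A_1^*$ and $f\colon A_1' \to A_1^*$ that agree on $A_0$; after identifying $A_1$ with $i(A_1)$, we have $A_1 \leq_\M A_1^*$ and $f \upharpoonright A_0 = \id_{A_0}$. For the second step, the span $(A_1 \hookrightarrow A_3,\; i\colon A_1 \to A_1^*)$ has its first arrow a $\mq$-embedding by hypothesis, so the weak existence statement following Lemma \ref{push} produces the pushout $A_3^* := A_3 +_{A_1} A_1^*$ in $\md$ together with an $\M$-embedding $\nu\colon A_3 \to A_3^*$ and a $\mq$-embedding $\mu\colon A_1^* \to A_3^*$, and the resulting independent square, after identifications, reads $A_3 \dnf^{A_3^*}_{A_1} A_1^*$. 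All the required containments are immediate: $A_1 \leq_\M A_1^*$, $A_3 \leq_\M A_3^*$, and $A_1^* \leq_{\mq} A_3^*$ via $\mu$.

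To verify the remaining condition $A_1^* \dnf^{A_3^*}_{A_0} A_2$, apply symmetry (Lemma \ref{basic-nf}(3)) to the hypothesis $A_1 \dnf^{A_3}_{A_0} A_2$ to obtain $A_2 \dnf^{A_3}_{A_0} A_1$, then combine this with $A_3 \dnf^{A_3^*}_{A_1} A_1^*$ via transitivity (Lemma \ref{basic-nf}(2)): the two independent squares share the edge from $A_1$ to $A_3$, so the resulting big square yields $A_2 \dnf^{A_3^*}_{A_0} A_1^*$; a final application of symmetry gives the desired independence. The main obstacle, and the reason amalgamation is invoked rather than a direct pushout construction, is that one cannot simply take $A_1^* = A_1 +_{A_0} A_1'$ in $\rmod R$: since $A_0 \leq_\M A_1'$ need not be a $\mq$-embedding, Lemma \ref{push} does not guarantee this pushout lies in $\md$, so weak existence is unavailable at that first step. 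Amalgamation supplies $A_1^* \in \md$ abstractly, after which the stronger $\mq$-embedding $A_1 \leq_{\mq} A_3$ allows the argument to close cleanly through weak existence followed by transitivity.
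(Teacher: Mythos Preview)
Your argument has a genuine gap: you invoke a ``standing amalgamation hypothesis on $\mathbf{D}$'' in the first step, but there is none. The only standing assumption in this section is Hypothesis~\ref{hyp}, and Proposition~\ref{claim2} is stated (and proved in the paper) without assuming amalgamation; indeed, a central point of the paper is that deconstructible classes may fail amalgamation (Examples~\ref{illus3}, \ref{illus4}). Your step producing $A_1^*$ as an abstract amalgam of $A_1$ and $A_1'$ over $A_0$ therefore is not justified. Everything from step~2 onward is fine, and if amalgamation were available your proof would be correct; but as written it proves a weaker statement than the one claimed.

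The paper sidesteps this precisely by exploiting the hypothesis you did not use: $A_1' \leq_\M A_3$. Since $A_1$ and $A_1'$ already sit together inside $A_3$, there is no need to amalgamate them abstractly---one can take $A_1^*$ to be an isomorphic copy of $A_3$. Concretely, apply weak existence to the span $(A_1 \leq_{\mq} A_3,\ A_1 \leq_\M A_3)$, obtaining $h\colon A_3 \xrightarrow[A_1]{} N$ with $A_3 \leq_{\mq} N$ and $h(A_3) \dnf_{A_1}^N A_3$; then transport the hypothesis $A_2 \dnf^{A_3}_{A_0} A_1$ along $h$ via invariance and stack with the new square via transitivity to get $h(A_2) \dnf^N_{A_0} A_3$. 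Finally choose an isomorphism $g\colon A_3^* \cong N$ with $g\rest A_3 = h$, and set $A_1^* = g^{-1}(A_3)$ and $f = g^{-1}\rest A_1'$. This uses only weak existence, invariance, and transitivity---no amalgamation.
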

\begin{proof}
By weak existence (Lemma \ref{wu}) applied to $A_1 \leq_{\mq} A_3, A_3$ there is $h: A_3 \xrightarrow[A_1]{} N$ a $\mq$-embedding such that $A_3 \leq_{\mq} N$ and $h(A_3) \dnf_{A_1}^N A_3$. Applying $h$ to $A_2 \dnf^{A_3}_{A_0} A_1$, we get that $h(A_2)\dnf^{h(A_3)}_{A_0} A_1$ by invariance.  Then by transitivity applied to the last two independent squares we get that  $h(A_2)\dnf^{N}_{A_0} A_3$.

Let $A_3^* \in \mathcal{D}$ an $g: A_3^* \cong N$ such that $A_3 \leq_{\mq} A_3^*$ and $g \rest A_3 = h$. Let $A_1^* = g^{-1}(A_3)$ and $f := g^{-1}\rest A_1': A_1' \to A_1^*$.  It is straightforward to check that everything holds.
\end{proof}

\begin{lemma}\label{w-uniq}(Weak uniqueness for types) Assume $(\md, \leq_\M)$ has the amalgamation property. Let $p_1 =\gtp_\D( \bar{b}_1/B; B^*)$ and $p_2= \gtp_\D(\bar{b}_2/B; B^*)$  and $A \leq_{\mq} B$. Assume $p_1\rest A = p_2\rest A$ and  $A_1 \dnf^{B^*}_{A} B$, $A_2 \dnf^{B^*}_{A} B$ with $\bar{b}_i \in A_i$, $A_i \leq_{\mq} B^*$ for $i = 1,2$, then $p_1 = p_2$.
\end{lemma}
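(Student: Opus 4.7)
The plan is to combine Proposition \ref{claim2} with the two \emph{uniqueness} tools, Lemma \ref{claim3} and Proposition \ref{claim1}, using the amalgamation property to turn the hypothesis $p_1\rest A=p_2\rest A$ into a single $E_{\text{at}}$-step. First I would observe that, under the amalgamation property, $E_{\text{at}}$ is already transitive on triples over a fixed base: given $(\bar c_1,A,M_1)\,E_{\text{at}}\,(\bar c_2,A,M_2)\,E_{\text{at}}\,(\bar c_3,A,M_3)$, amalgamating the two witnesses over $M_2$ produces a single witness. Thus $p_1\rest A=p_2\rest A$ yields $N\in\md$ and $\D$-embeddings $u_1,u_2\colon B^*\to N$ fixing $A$ with $u_1(\bar b_1)=u_2(\bar b_2)$, i.e., $(\bar b_1,A,B^*)\,E_{\text{at}}\,(\bar b_2,A,B^*)$.

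Next I would apply Proposition \ref{claim2} with $D_0=A$, $D_1=A_1$, $D_2=B$, $D_3=B^*$, $D_1'=A_2$; the hypotheses hold by the symmetry of $\dnf$ and the given $\leq_{\mq}$-relations. This produces $D_1^*,D_3^*\in\md$ and an $\M$-embedding $f\colon A_2\to D_1^*$ fixing $A$, with $A_1\leq_\M D_1^*\leq_{\mq} D_3^*$, $B^*\leq_\M D_3^*$, and $D_1^*\dnf^{D_3^*}_A B$. Monotonicity and symmetry applied to $A\leq_\M f(A_2)\leq_\M D_1^*$ then give $f(A_2)\dnf^{D_3^*}_A B$, and both $\bar b_1$ and $f(\bar b_2)$ now lie in the common free-over-$A$ module $D_1^*\leq_{\mq} D_3^*$.

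The desired equality $p_1=p_2$ then factors through $\gtp_\D(f(\bar b_2)/B;D_3^*)$. For $\gtp_\D(\bar b_1/B;D_3^*)=\gtp_\D(f(\bar b_2)/B;D_3^*)$: the $E_{\text{at}}$ obtained in the first step propagates along $B^*\leq_\M D_3^*$ to $(\bar b_1,A,D_3^*)\,E_{\text{at}}\,(\bar b_2,A,D_3^*)$; amalgamating the two $A$-fixing embeddings of $A_2$ into $D_3^*$ (the inclusion, and $f$ followed by $D_1^*\hookrightarrow D_3^*$) yields $(\bar b_2,A,D_3^*)\,E_{\text{at}}\,(f(\bar b_2),A,D_3^*)$; the collapse principle then delivers $(\bar b_1,A,D_3^*)\,E_{\text{at}}\,(f(\bar b_2),A,D_3^*)$, and Lemma \ref{claim3} (with both tuples in $D_1^*$, $D_1^*\dnf^{D_3^*}_A B$, $A\leq_{\mq} B$, $D_1^*\leq_{\mq} D_3^*$) gives the equality. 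For $\gtp_\D(\bar b_2/B;B^*)=\gtp_\D(f(\bar b_2)/B;D_3^*)$, I apply Proposition \ref{claim1} to the two free squares $A_2\dnf^{B^*}_A B$ and $f(A_2)\dnf^{D_3^*}_A B$: if $\bar a$ enumerates $A_2$ starting with $\bar b_2$, then the isomorphism $f\colon A_2\to f(A_2)$ itself witnesses $(\bar a,A,A_2)\,E_{\text{at}}\,(f(\bar a),A,f(A_2))$, so Proposition \ref{claim1} gives $\gtp_\D(\bar a/B;B^*)=\gtp_\D(f(\bar a)/B;D_3^*)$; restricting the witnessing embeddings to the initial subtuple $\bar b_2$ of $\bar a$ yields the subtuple equality. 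Finally $\gtp_\D(\bar b_i/B;B^*)=\gtp_\D(\bar b_i/B;D_3^*)$ for $i=1,2$ since $B^*\leq_\M D_3^*$, and the three equalities chain to $p_1=p_2$.

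The main obstacle is that the embedding $f$ produced by Proposition \ref{claim2} only fixes $A$, not $B$, so the transition $\bar b_2\mapsto f(\bar b_2)$ is not a priori type-preserving over $B$; it must be routed through Proposition \ref{claim1}, which is precisely the statement that $\dnf$-free extensions of a type over $A$ to a type over $B$ are uniquely determined. Ensuring that all the $\leq_{\mq}$-hypotheses of Lemma \ref{claim3} and Proposition \ref{claim1} are satisfied at each invocation is the bookkeeping burden of the argument, and the $\leq_{\mq}$-clauses built into the conclusion of Proposition \ref{claim2} are tailored precisely to meet this burden.
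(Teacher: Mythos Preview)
Your proposal is correct and follows essentially the same route as the paper: apply Proposition \ref{claim2} to obtain a common extension $D_3^*$ and the $A$-fixing map $f$, use Lemma \ref{claim3} (with amalgamation collapsing $E$ to $E_{\text{at}}$) to identify $\gtp(\bar b_1/B;D_3^*)$ with $\gtp(f(\bar b_2)/B;D_3^*)$, use Proposition \ref{claim1} to identify $\gtp(\bar b_2/B;B^*)$ with $\gtp(f(\bar b_2)/B;D_3^*)$, and chain. Your write-up is more explicit about the $E_{\text{at}}$-manipulations (in particular the step $(\bar b_2,A,D_3^*)\,E_{\text{at}}\,(f(\bar b_2),A,D_3^*)$, which the paper hides behind ``because $f$ fixes $A$''), but the structure and the key invocations are the same.
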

\begin{proof}
We have that $B \dnf^{B^*}_{A} A_1$ by symmetry. It follows from Proposition \ref{claim2}, that there are $A^*, B^{**} \in \mathcal{D}$ and $f: A_2 \xrightarrow[A]{} A^*$ an $\M$-embedding such that $A_1 \leq_\M A^* \leq_\M B^{**}$, $B^* \leq_\M B^{**}$, $A^* \leq_{\mq} B^{**}$ and $A^* \dnf^{B^{**}}_{A} B$.

Realize that $\bar{b}_1, f(\bar{b}_2)  \in A^*$. Moreover,  $$\gtp_\D(\bar{b}_1/B; B^{**})\rest A = p_1\rest A = p_2\rest A = \gtp_\D(f(\bar{b}_2)/B; B^{**})\rest A$$ where the middle equality holds by hypothesis and the  last equality holds because $f$ fixes $A$. It follows from Lemma \ref{claim3} and amalgamation that $\gtp_\D(\bar{b}_1/B; B^{**})= \gtp_\D(f(\bar{b}_2)/B; B^{**})$.

On the other hand, let $\bar{a}$ be an enumeration of $A_2$ and $f(A_2) = A_2^{*} \leq_\M A^{*}$. Realize $(\bar{a}, A, A_2) E_{\text{at}}^\D (f(\bar{a}), A,  A_2^*)$, $A \leq_{\mq}  B$, $A_2  \dnf_A^{B^*} B$ and $A_2^* \dnf_A^{B^{**}} B$  as $A \leq_\M A_2^* = f(A_2) \leq_M A^*$ and by monotonicity. Then $\gtp_\D(\bar{a}/B; B^{*})=\gtp_\D(f(\bar{a})/A; B^{**})$ by Lemma \ref{claim1}. Observe that $\bar{b}_2 \subseteq \bar{a}$, so $\gtp_\D(\bar{b}_2/B; B^{*})=\gtp_\D(f(\bar{b}_2)/B; B^{**})$. 

Putting together the last equation of the last two paragraph, it follows that $\gtp_\D(\bar{b}_2/B; B^{*}) = \gtp_\D(\bar{b}_1/B; B^{**})$. Hence $p_1 = p_2$  because $B^* \leq_\M B^{**}$.
\end{proof}

%\vspace{-0.5cm}


\begin{thebibliography}{She01b}

%\vspace{-0.2cm}

\bibitem[AnFu92]{AF}
Frank Anderson, Kent Fuller, \emph{Rings and Categories of Modules}, Springer-Verlag
(1992).

\bibitem[Bal09]{baldwinbook09}
John Baldwin, \emph{Categoricity}, American Mathematical Society
(2009).

\bibitem[BET07]{baldwine}
 John Baldwin, Paul Eklof, Jan Trlifaj, \emph{$^{\perp}N$  as an abstract
elementary class}, Annals of Pure and Applied Logic \textbf{149} (2007), no. 1,25--39.

\bibitem[Bau75]{baur}
Walter Baur,  \emph{$\aleph_0$-categorical modules}, Journal of Symbolic Logic \textbf{40} (1975),  Issue 2, 213--220.

\bibitem[Bon26]{bon25}
Will Boney, \emph{A Module-theoretic introduction to abstract elementary classes},  Functor and Tensor Categories, Models, and Systems (Alex Martsinkowsky, ed.), Springer Proceedings in Mathematics \& Statistics {\bf 521} (2026).

%\bibitem[Bon]{bon-notes}
%Will Boney, \emph{Classification theory for tame abstract elementary classes}, Lecture notes, 65 pages. URL:  https://bpb-us-e1.wpmucdn.com/wp.txstate.edu/dist/0/1970/files/2019/05/255-notes.pdf

\bibitem[EkMe02]{EM} Paul C.\ Eklof, Alan H.\ Mekler,
\emph{Almost Free Modules}, revised ed., North-Holland Math.\ Library, Elsevier, Amsterdam 2002.

\bibitem[EnJe00]{enje}
Edgar E. Enochs, Overtoun M. G. Jenda, \emph{Relative Homological Algebra}, Berlin, New York: De Gruyter, 2000. 

\bibitem[Fis75]{fisher}
Edward Fisher, \emph{Abelian structures}, Yale University, 1974/75. 

\bibitem[GoTr12]{GT} 
R\"{u}diger G\"{o}bel, Jan Trlifaj, \emph{Approximations and Endomorphism Algebras of Modules}, 
2nd ed., Vol.~1 - Approximations, Vol. 2 - Predictions, W.\ de Gruyter, Berlin - Boston 2012.

\bibitem[GrVan06]{tamenessone}
Rami Grossberg and Monica VanDieren, \emph{Galois-stability for tame abstract elementary classes}, Journal
  of Mathematical Logic \textbf{6} (2006), no.~1, 25--49.
	
\bibitem[LRV19]{lrv1}
Michael Lieberman, Ji\v{r}\'{\i} Rosick\'{y}, Sebastien Vasey, \emph{Forking independence from the categorical point of view}, Advances in Mathematics \textbf{346} (2019), 719--772.

\bibitem[LRV23]{lrvcell}
Michael Lieberman, Ji\v{r}\'{\i} Rosick\'{y}, Sebastien Vasey, \emph{Cellular categories and stable independence}, Journal of Symbolic Logic \textbf{88} (2023), 811--834.

\bibitem[Maz21a]{maztor}
Marcos Mazari-Armida, \emph{A model theoretic solution to a problem of L\'{a}szl\'{o} Fuchs}, Journal of Algebra \textbf{567} (2021), 196--209.

\bibitem[Maz21b]{maz1}
Marcos Mazari-Armida, \emph{Superstability, noetherian rings and pure-semisimple rings}, Annals of Pure and Applied Logic \textbf{172} (2021), no. 3, 102917. 

\bibitem[Maz23]{maz2}
 Marcos Mazari-Armida, \emph{Some stable non-elementary classes of modules}, The Journal of Symbolic Logic, \textbf{88} (2023), no. 1, 93-- 117.
 
\bibitem[MaRo26]{mj}
Marcos Mazari-Armida and Jiri Rosicky, \emph{Relative injective modules, superstability and noetherian categories}, Journal of Mathematical Logic \textbf{26} (2026), no. 1, 245002.

\bibitem[PaShe25]{PS}
Gianluca Paolini and Saharon Shelah, \emph{On the problem of stability of abstract elementary classes of modules}, Preprint. https://arxiv.org/abs/2512.02545v1. 

\bibitem[\v{S}aTr24]{satr}
Jan \v{S}aroch, Jan Trlifaj, \emph{Deconstructible abstract elementary classes of
modules and categoricity}, Bulletin of the London Mathematical Society \textbf{56} (2024), 3854-–3866.

\bibitem[She87a]{sh88}
Saharon Shelah, \emph{Classification of nonelementary classes, {II}.
{A}bstract elementary classes}, Classification theory (John Baldwin, ed.)
(1987), 419--497.

\bibitem[She87b]{sh300}
Saharon Shelah, \emph{Universal classes}, Classification theory (John
Baldwin, ed.) (1987), 264--418.

\bibitem[SlTr14]{ST} Alexander Sl\'{a}vik, Jan Trlifaj, \emph{Approximations and locally free modules},
Bulletin of the London Mathematical Society \textbf{46} (2014) 76–90. 

\bibitem[Trl96]{T} Jan Trlifaj, \emph{Whitehead test modules},
Trans.\ Amer.\ Math.\ Soc. \textbf{348} (1996), 1521-1554.

\bibitem[Vas16]{vasey16}
Sebastien Vasey, \emph{Building independence relations in abstract elementary classes}, Annals of Pure and Applied Logic  \textbf{167} (2016), no. 11, 1029–1092. 
  
%\bibitem[Vas18]{vaseyt}
%Sebastien Vasey, \emph{Toward a stability theory of tame abstract elementary classes}, Journal of Mathematical Logic \textbf{18} (2018), no. 2, 1850009.

\end{thebibliography}
\end{document}